\documentclass[12pt]{amsart}
\usepackage{amssymb}
\usepackage{amsbsy}
\usepackage{amscd}
\usepackage{comment}
\usepackage{enumerate}
\usepackage{mathrsfs}
\usepackage{eucal}

\oddsidemargin -0.54cm \evensidemargin -0.54cm
\topmargin -2cm \headheight 1pc \headsep 2pc
\textheight 55pc \textwidth 40pc \columnsep 2pc \columnseprule 0pt

\makeatletter

\hyphenation{Gro-then-dieck}

\hfuzz1pc 

\renewcommand{\thesubsection}{\thesection(\@roman\c@subsection)}

\makeatother

\usepackage{verbatim}
\usepackage{version}
\usepackage{color}

\excludeversion{NB}

\newtheorem{Theorem}[equation]{Theorem}
\newtheorem{Corollary}[equation]{Corollary}
\newtheorem{Lemma}[equation]{Lemma}
\newtheorem{Proposition}[equation]{Proposition}

\theoremstyle{definition}
\newtheorem{Definition}[equation]{Definition}

\theoremstyle{remark}
\newtheorem{Remark}[equation]{Remark}

\numberwithin{equation}{section}

\newcommand{\thmref}[1]{Theorem~\ref{#1}}
\newcommand{\secref}[1]{\S\ref{#1}}
\newcommand{\lemref}[1]{Lemma~\ref{#1}}
\newcommand{\propref}[1]{Proposition~\ref{#1}}

\newcommand{\subsecref}[1]{\S\ref{#1}}

\newcommand{\defref}[1]{Definition~\ref{#1}}
\newcommand{\remref}[1]{Remark~\ref{#1}}

%

\newcommand{\defeq}{\overset{\operatorname{\scriptstyle def.}}{=}}

\newcommand{\C}{{\mathbb C}}
\newcommand{\Z}{{\mathbb Z}}


\newcommand{\algsl}{\operatorname{\mathfrak{sl}}}

\newcommand{\g}{{\mathfrak g}}
\newcommand{\h}{{\mathfrak h}}

\newcommand{\End}{\operatorname{End}}

\newcommand{\Ker}{\operatorname{Ker}}

\newcommand{\ve}{\varepsilon}
\setcounter{tocdepth}{1}
\renewcommand{\MR}[1]{}

%
%
\newcommand{\ad}{{\operatorname{ad}}}
\newcommand{\re}{{\operatorname{re}}}

\usepackage[bookmarks=false]{hyperref}

\begin{document}
\author{Nicolas Guay, Hiraku Nakajima, Curtis Wendlandt}
\title[Coproduct for Yangians of affine Kac-Moody algebras]
{Coproduct for Yangians of affine Kac-Moody algebras
}
\address{N.G.: Department of Mathematical and Statistical Sciences,
University of Alberta, CAB 632, Edmonton, AB  T6G 2G1, Canada}
\email{nguay@ualberta.ca}
\thanks{N.G.: Supported by a Discovery Grant of the Natural Sciences and Engineering Research Council of Canada}
\address{H.N.: Research Institute for Mathematical Sciences,
Kyoto University, Kyoto 606-8502, Japan and Kavli Institute for the Physics and Mathematics of the Universe (WPI), The University of Tokyo, Kashiwa 277-8583, Japan}
\email{nakajima@kurims.kyoto-u.ac.jp, hiraku.nakajima@ipmu.jp}
\thanks{H.N.: Supported by the Grant-in-aid
for Scientific Research (No.25220701, No.16H06335), JSPS, Japan.
}
\address{C.W.: Department of Mathematical and Statistical Sciences,
University of Alberta, CAB 632, Edmonton, AB  T6G 2G1, Canada}
\email{cwendlan@ualberta.ca}
\thanks{C.W.: Supported by a Doctoral Scholarship (CGS-D) of the Natural Sciences and Engineering Research Council of Canada
}

\subjclass[2010]{Primary 17B37;
Secondary 17B67 
}
\begin{abstract}
Given an affine Kac-Moody algebra, we explain how to construct a coproduct on its associated Yangian.  In order to prove that this coproduct is an algebra homomorphism, we obtain, in the first half of this paper, a minimalistic presentation of the Yangian when the Kac-Moody algebra is, more generally, symmetrizable.
\end{abstract}

\maketitle
\tableofcontents

\section{Introduction}

The quantized enveloping algebra $U_{\hbar}(\g_0)$ of a simple Lie algebra $\g_0$ is a Hopf algebra which provides a quantization of a certain Lie bialgebra structure on $\g_0$. Being a Hopf algebra, it not only possesses an associative product, but is also equipped with a coproduct. This is what distinguishes it from the enveloping algebra $U(\g_0)$ of $\g_0$ because, as algebras, $U_{\hbar}(\g_0)$ is actually a trivial deformation of $U(\g_0)$. (This is a consequence of the vanishing of the second Hochschild cohomology group of $U(\g_0)$ - see Theorem XVIII.4.1 in  \cite{Kas-book}.) The definition of the Drinfeld-Jimbo quantized enveloping algebra can be extended to any symmetrizable Kac-Moody algebra. Furthermore, using what is commonly referred to as Drinfeld's second realization \cite{DrinfeldNew}, it is even possible to define affinizations of quantized Kac-Moody algebras \cite{Na01,He1} (see earlier references therein). These include, in particular, quantum toroidal algebras.

There are two families of quantized enveloping algebras of affine type: the Drinfeld-Jimbo quantum affine algebras $U_{\hbar}(\g)$ and the Yangians $Y_{\hbar}(\g_0)$. (Here, $\g$ is the affine Lie algebra corresponding to $\g_0$.) Although a priori quite different, there exist completions of these algebras which are in fact isomorphic \cite{GTL1} (see also \cite{GuMa} for the proof of a weaker result). Furthermore, tensor equivalences between categories of representations of these two quantum groups have been established in \cite{GTL2,GTL3}.  It is also possible to associate a Yangian $Y_{\hbar}(\g)$  to any symmetrizable Kac-Moody algebra $\g$, in particular to an affine Lie algebra: one thus obtains an affine Yangian. Quantum toroidal algebras and affine Yangians are two of the main examples of quantized enveloping algebras of double affine type, a third example being provided by the deformed double current algebras \cite{Gu2,Gu3,GuYa,TLY}. 

For both quantum affine algebras and Yangians there is a standard coproduct: in the former case, it is the coproduct given in terms of the Kac-Moody generators (as in \cite[Definition-Prop. 6.5.1]{CP-book}), while in the latter case it is the coproduct given in terms of the generators $\{X,J(X)\}_{X\in \g_0}$ (as in \cite[Theorem 12.1.1]{CP-book}). There also exist non-standard coproducts on these two families which are originally due to V. Drinfeld - see Definition 3.2 in \cite{DiFr} and \S 6 in \cite{DiKh}. Actually, the authors of \cite{DiKh} need to consider the double of the Yangian, but it is also possible to degenerate the non-standard coproduct on quantum affine algebras to obtain one on the Yangian itself - see \cite{GTL3}. These have also appeared in the recent work \cite{YaZh} via an isomorphism between the Yangian and a cohomological Hall algebra which turns out to be an isomorphism of bialgebras when the Yangian is equipped with the non-standard coproduct and the cohomological Hall algebra is equipped with the comultiplication constructed in \textit{loc.\ cit.}  For quantum affine algebras, the non-standard and standard coproducts are related by a meromorphic twist (see \cite[Proposition 3.8]{EKP}, \cite[Theorem 3.1]{KoTo} and \cite{GTL3}), and a rational version of this result is expected to hold for Yangians \cite{GTL3}. (For the Yangian double, see \cite[Proposition 5.1]{EnFe} and \cite[Remark 5]{En}.) The non-standard coproducts are not exactly genuine  coproducts as they involve infinite sums and map into certain completed tensor products: see \cite{He1} and \cite{GTL3}. Additionally, they cannot always be used to define a module structure on tensor products of two modules because of convergence issues. The definitions of these non-standard coproducts extend automatically to quantum toroidal algebras and affine Yangians. In this context, they were used in the work of D. Hernandez \cite{He1,He2} and of B. Feigin, E. Feigin, M. Jimbo, T. Miwa and E. Mukhin \cite{FFJMM1,FFJMM2,FJMM}. The papers \cite{FFJMM1,FFJMM2} are about the quantum toroidal algebra of $\mathfrak{gl}_1$ (which was however given the name quantum continuous $\mathfrak{gl}_{\infty}$). There is also an affine Yangian of type $\mathfrak{gl}_1$ studied, for instance, in \cite{Ts,BeTs}. This affine Yangian was shown in \cite{ArSc} to be isomorphic to a certain algebra $\mathrm{\mathbf{SH}}^{\mathbf{c}}$ which is a sort of stable limit of spherical trigonometric Cherednik algebras of type $\mathfrak{gl}_{\ell}$ and was introduced in \cite{ScVa} where it was used to prove a version of the AGT-conjecture. The algebra $\mathrm{\mathbf{SH}}^{\mathbf{c}}$, and thus the affine Yangian of $\mathfrak{gl}_1$, admits a topological coproduct which is close to the standard coproduct: see Theorem 7.9 in \cite{ScVa}. It is not clear that the proof in \cite{ScVa} that the coproduct is well-defined can be modified for general $Y_{\hbar}(\g)$.  We will not consider the affine Yangian of type $\mathfrak{gl}_1$ in the present paper.  

The goal of the present paper is to introduce a coproduct $\Delta$ on affine Yangians which is a natural analog of the standard coproduct on Yangians of finite-dimensional simple Lie algebras. We first define it via the action of the affine Yangian on the tensor product of two modules in the category $\mathscr{O}$ (\defref{def:Delta}) and prove that it is an algebra homomorphism (\thmref{thm:coproduct}). (Our proof also works for $Y(\g_0)$; in this case, \thmref{thm:coproduct} is, of course, already known, but a proof has never appeared in the literature.) In the subsequent section (\secref{Sec:comp}), we introduce a completion of the tensor product of the affine Yangian with itself and explain how $\Delta$ can be viewed as an algebra homomorphism from the affine Yangian into that completion: see \propref{Prop:Del.comp}. That completion is defined using a grading which is not compatible with the algebra structure on $Y_{\hbar}(\g) \otimes Y_{\hbar}(\g)$, so an argument is needed to prove that the multiplication on $Y_{\hbar}(\g) \otimes Y_{\hbar}(\g)$ extends to it (see \propref{prop:prodwhY}). Furthermore, in \secref{Sec:paracoprod}, we introduce a third version of our coproduct which depends on a formal parameter $u$ and which defines an algebra homomorphism from $Y_{\hbar}(\g)$ to $(Y_{\hbar}(\g) \otimes Y_{\hbar}(\g))(\!(u)\!)$, from which it is possible to recover the algebra homomorphisms of \thmref{thm:coproduct} and \propref{Prop:Del.comp}. It has been used in \cite{GRW2} to give a proof of the PBW property for affine Yangians in the simply laced case. Moreover, in \secref{Sec:comp}, we need to assume two conditions regarding the existence of a triangular decomposition of the Yangian and those are not needed in \secref{Sec:paracoprod}.  One advantage of our coproduct is that it can be used to define a module structure on the tensor product of two modules in the category $\mathscr O$ without any convergence issues.

It is natural to conjecture that this new coproduct $\Delta$ is related to the coproduct alluded to two paragraphs above via a certain twist as suggested in \cite{GTL3}, but it is not at all clear that this is the case because such a twist is expected to coincide with the lower triangular part of the universal $R$-matrix of the Yangian and no universal $R$-matrix is known for the affine Yangians.  At least, in the case when $\g$ is symmetric,  see the last part of the third paragraph below.

In order to prove \thmref{thm:coproduct}, we need to simplify the presentation of the affine Yangians: this is accomplished in \secref{sec:YangianKM} - see \thmref{thm:deduction}. The results of this section are actually valid more generally for Yangians of symmetrizable Kac-Moody algebras which satisfy certain mild conditions. For affine Yangians, these conditions are equivalent to the assumption that $\g$ is not of type $A_1^{(1)}$ or $A_2^{(2)}$. However, we expect that \thmref{thm:coproduct} holds more generally for all affine Lie algebras and even for any symmetrizable Kac-Moody algebra. 

When $\g$ is of affine type $A_{n-1}^{(1)}$, it is possible to introduce an extra parameter $\ve$ in the definition of $Y_\hbar(\g)$ in order to obtain a two parameter Yangian $Y_{\hbar,\ve}(\g)$ (see \defref{def:TwoPar}). All the main results of this paper hold in this greater generality: this is briefly explained in \secref{sec:2para}. These two parameter Yangians have been studied by the first named author in \cite{Gu1,Gu2}. (Quantum toroidal algebras of type $A$ can also depend on two parameters, see \cite{VaVa}.)

When $\g$ is symmetric (including the $\widehat{\mathfrak{gl}}_1$-case), there is a geometric construction of the Yangian using quiver varieties \cite{MaOk} which gives a coproduct as well as the universal $R$-matrix and integrable representations. By the construction in \cite{Va}, we have a homomorphism from $Y_{\hbar}(\mathfrak g)$ to the Yangian in \cite{MaOk}. Our formula \eqref{eq:assign} implies that it is compatible with the coproduct on both Yangians. Since we do not know that it is an isomorphism (or whether it is injective or surjective), \cite{MaOk} does not imply our main result, but it gives evidence that \thmref{thm:coproduct} is true in a more general setting. In the geometric context, the non-standard coproduct corresponds to the restriction to the torus fixed point (see \cite{VV02} for quantum affine algebras; the same proof works for affinizations of quantized Kac-Moody algebras and also for Yangians). Then the coproduct in \cite{MaOk} and the non-standard coproduct are related by the stable envelope, which is a `half' of the universal $R$-matrix, and is a key object introduced in \cite{MaOk}. In particular, the conjecture in the third paragraph above is known for the Yangian in \cite{MaOk}.

In \cite{FKPRW}, the authors define a coproduct on shifted Yangians which is related to the coproduct on $Y_{\hbar}(\g)$ via shift maps: see \S 4.6 in \textit{loc.\ cit.} Their Theorem 4.8 states that this coproduct is well-defined in the sense that it respects the defining relations of the shifted Yangians. The proof of that theorem depends on the main results of our present paper regarding the coproduct $\Delta$ on $Y_{\hbar}(\g)$. 

It is natural to expect that a coproduct similar to the one constructed in the present paper exists for quantum toroidal algebras. It would also be interesting to obtain one for deformed double current algebras as it would certainly be useful to make progress in understanding their largely unexplored representation theory. 

\begin{center}\sc Acknowledgment \end{center} The authors would like to thank Oleksandr Tsymbaliuk and Mamoru Ueda for pointing out an inaccuracy in an earlier version of this paper. They are also grateful to the referee for a very thorough reading of our manuscript.

\section{The Yangian of a Kac-Moody algebra}\label{sec:YangianKM}

Let $\g$ be a symmetrizable Kac-Moody algebra associated with the indecomposable Cartan matrix $(a_{ij})_{i,j\in I}$, where $I$ is the set of vertices of the Dynkin diagram corresponding to $\g$.
We also fix an invariant inner product $(\ ,\ )$ on $\g$.
We normalize the Chevalley generators $x_i^\pm$, $h_i$ so that
$(x_i^+,x_i^-) = 1$ and $h_i = [x^+_i,x^-_i]$. Let $\Delta$ ($\Delta^{\mathrm{re}}$, resp. $\Delta^{\mathrm{im}}$) be the set of all roots of $\g$ (of all real roots, resp. of all imaginary roots), and let $\Delta_{\pm}$ be the sets of positive and of negative roots.  $\Delta^{\mathrm{re}}_{\pm}$ is defined similarly. The simple roots will be denoted $\alpha_i$ for $i\in I$. The root space corresponding to a root $\beta \in \Delta$ will be denoted $\g_{\beta}$. When $\g$ is an affine Lie algebra, we let $\delta$ be the positive imaginary root such that $(\Z\setminus\{ 0 \})\delta$ is the set of all imaginary roots of $\g$ \cite{Ka-book}. Let $\g'$ be the derived subalgebra $[\g,\g]$.

In the definition below, and consequently for the rest of this paper, we will assume that $\g$ is not of type $A_1^{(1)}$: see the definition in \S 1.2 in \cite{BeTs} and Definition 5.1 in \cite{Ko} for the correct definition of the Yangian in this case. All algebras will be defined over $\C$ and will be Lie algebras or associative, unital algebras, unless specified otherwise.

\begin{Definition}\label{def:Yangian} Let $\hbar\in \C$. The Yangian $Y_\hbar(\g')$ is the algebra with generators $x_{i
  r}^\pm$, $h_{i r}$ ($i\in I$, $r\in\Z_{\ge 0}$) subject to the following
defining relations: {\allowdisplaybreaks[4]
\begin{gather}
  [h_{i r}, h_{j s}] = 0, \label{eq:relHH}\\
  [h_{i 0}, x^\pm_{j s} ] = \pm (\alpha_i,\alpha_j) x^\pm_{j s},
  \label{eq:relHX}
\\
  [x^+_{i r}, x_{j s}^-] = \delta_{ij} h_{i, r+s}, \label{eq:relXX}
\\
  [h_{i, r+1}, x^\pm_{j s}] - [h_{i r}, x^\pm_{j, s+1}] =
  \pm\frac{\hbar (\alpha_i,\alpha_j)}2 \left(
    h_{i r} x^\pm_{j s} + x^\pm_{j s} h_{i r}
  \right), \label{eq:relexHX}
\\
  [x^\pm_{i, r+1}, x^\pm_{j s}] - [x^\pm_{i r}, x^\pm_{j, s+1}] = 
  \pm\frac{\hbar (\alpha_i,\alpha_j)}2 \left(
    x^\pm_{i r} x^\pm_{j s} + x^\pm_{j s} x^\pm_{i r}\right),
   \label{eq:relexXX}
\\
  \sum_{\sigma\in S_b}
   [x^\pm_{i r_{\sigma(1)}}, [ x^\pm_{i r_{\sigma(2)}}, \cdots,
       [x^\pm_{i, r_{\sigma(b)}}, x^\pm_{j s}] \cdots ]]
   = 0
   \quad \text{if $i\neq j$,}
 \label{eq:relDS}
\end{gather}
where} $b = 1 - a_{ij}$.
The Yangian $Y_\hbar(\g)$ is the algebra generated by $\{x_{ir}^\pm,h_{ir}\}_{i\in I,r\in \Z_{\geq 0}}\cup \h$, where $\h$ is the Cartan subalgebra of $\g$, subject to the relations of $Y_\hbar(\g^\prime)$ in addition to
\begin{equation}\label{eq:Cartan}
\begin{gathered}
  h_{i 0} = \frac{(\alpha_i,\alpha_i)}2 \alpha_i^\vee \text{ where the simple coroot $\alpha_i^{\vee}$ belongs to $\h$}, \\
  [h,h_{i r}] = 0, \quad
  [h, x^\pm_{i r}] = \langle \alpha_i, h\rangle x^\pm_{i r} \quad
  \text{for $h\in\mathfrak h$}.
\end{gathered}
\end{equation}
\end{Definition}

Given two elements $a,b$ of some algebra $\mathcal{A}$, we set $\{ a,b\} =ab+ba$. In particular, the right-hand sides of \eqref{eq:relexHX} and \eqref{eq:relexXX} could be written in terms of $\{ h_{ir}, x_{js}^{\pm} \}$ and $\{ x_{ir}^{\pm}, x_{js}^{\pm} \}$, respectively. 

Observe that for any pair of non-zero complex numbers $\hbar_1,\hbar_2\in \C^\times$ we have $Y_{\hbar_1}(\g)\cong Y_{\hbar_2}(\g)$. With this in mind, 
we set $\hbar=1$ and denote $Y_{\hbar}(\g)$ simply by $Y(\g)$ hereafter (except in \secref{sec:2para}). Similarly, we denote $Y_\hbar(\g^\prime)$ by $Y(\g^\prime)$. Note that the assignment $x^\pm_i, h \mapsto x^\pm_{i 0}, h$ gives an algebra homomorphism \begin{equation} \iota\colon U(\g)\to Y(\g). \label{eqn:iota} \end{equation}

\subsection{A minimalistic presentation of \texorpdfstring{$Y(\g)$}{}}

In this subsection, we state the first main result of this paper (\thmref{thm:deduction}), which we will prove in \subsecref{Ssec:Pf1} below.

From the defining relations, we can see that $Y(\g')$ is generated by $x^\pm_{i 0}$,
$h_{i 0}$ and $h_{i 1}$ with $i\in I$ (see, for instance, \cite{Lev}). In fact, we can obtain $x^\pm_{i r}$, $h_{i
  r}$ inductively from the relations
\begin{equation}\label{eq:rec}
  \begin{split}
   x^\pm_{i, r+1} &= \pm (\alpha_i, \alpha_i)^{-1}
   [h_{i 1} - \frac12 h_{i 0}^2, x^\pm_{i r}],
\\
   h_{i r} &= [ x^+_{i r}, x^-_{i 0}].
  \end{split}
\end{equation}

To simplify the first of these formulas as well as future computations, we introduce the auxiliary generators $\tilde h_{i 1}$, with  $i\in I$, by setting
\begin{equation}\label{eq:th}
   \tilde h_{i 1} \defeq h_{i 1} - \frac12 h_{i 0}^2.
\end{equation}
Then $x^\pm _{i, r+1} = \pm(\alpha_i,\alpha_i)^{-1} [\tilde h_{i 1},
x^\pm_{i r}]$ and \eqref{eq:relexHX} with $(r,s) = (0,0)$ can be rewritten as
\begin{equation}\label{eq:relexHX2}
  [\tilde h_{i 1}, x^\pm_{j 0}] = \pm (\alpha_i,\alpha_j) x^\pm_{j 1}.
\end{equation}

We want to reduce the number of relations to make it easier to check the compatibility of the coproduct $\Delta$ to be introduced in \secref{Sec:cop}. Such work was done by S. Levendorskii for $\g$ finite-dimensional in \cite{Lev}: See Theorem 1.2 therein. 

\begin{Theorem}\label{thm:deduction}
Suppose that, for any $i,j\in I$ with $i\neq j$, the matrix $\begin{pmatrix} a_{ii} & a_{ij} \\ a_{ji} & a_{jj} \end{pmatrix}$ is invertible and that, in addition, there exists one pair of indices $i,j\in I$ such that $a_{ij}=-1$. Then $Y(\g')$ is isomorphic to the algebra generated by $\{x_{ir}^\pm,h_{ir}\}_{i\in I,r\in \{0,1\}}$, subject only to the relations
{\allowdisplaybreaks[4]
\begin{gather}
  [h_{i r}, h_{j s}] = 0 \quad (0\le r,s\le 1), \label{eq:relHH'}\\
  [h_{i 0}, x^\pm_{j s} ] = \pm (\alpha_i,\alpha_j) x^\pm_{j s} \quad
  (s=0,1),
  \label{eq:relHX'}
\\
  [x^+_{i r}, x_{j s}^-] = \delta_{ij} h_{i, r+s} \quad
  ( 0 \le r+s \le 1),
  \label{eq:relXX'}
\\
  [\tilde h_{i 1}, x^\pm_{j 0}] = \pm (\alpha_i,\alpha_j) x^\pm_{j 1}
  \qquad (\tilde h_{i 1} \defeq h_{i 1} - \frac12 h^2_{i 0}),
  \label{eq:relexHX2'}
\\
  [x^\pm_{i 1}, x^\pm_{j 0}] - [x^\pm_{i 0}, x^\pm_{j 1}] = 
  \pm \frac{(\alpha_i,\alpha_j)}2 \left(
    x^\pm_{i 0} x^\pm_{j 0} + x^\pm_{j 0} x^\pm_{i 0}\right),
   \label{eq:relexXX'}
\\
   \mathrm{ad}(x^\pm_{i 0})^{1-a_{ij}}(x^\pm_{j 0}) = 0
   \quad \text{if $i\neq j$.}
 \label{eq:relDS'}
\end{gather}
}
\end{Theorem}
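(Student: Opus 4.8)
The plan is to start from the abbreviated relation set \eqref{eq:relHH'}--\eqref{eq:relDS'} and recover, step by step, all the relations \eqref{eq:relHH}--\eqref{eq:relDS}. Throughout, the generators $x_{ir}^\pm$ and $h_{ir}$ for $r\ge 2$ are to be understood as \emph{defined} by the recursion \eqref{eq:rec}, i.e. $x_{i,r+1}^\pm = \pm(\alpha_i,\alpha_i)^{-1}[\tilde h_{i1}, x_{ir}^\pm]$ and $h_{ir} = [x_{ir}^+, x_{i0}^-]$; so the content of the theorem is that the full relations hold for these defined elements. I would first treat the ``diagonal'' relations involving only a single index $i$, then the mixed relations. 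The two hypotheses — invertibility of every $2\times 2$ submatrix $\left(\begin{smallmatrix} a_{ii} & a_{ij}\\ a_{ji} & a_{jj}\end{smallmatrix}\right)$ and the existence of at least one pair with $a_{ij}=-1$ — should enter exactly as in Levendorskii's finite-dimensional argument: the first lets one solve for $x_{j1}^\pm$ or $h_{j1}$ in terms of brackets of index-$i$ elements (since $(\alpha_i,\alpha_j)=\tfrac{(\alpha_i,\alpha_i)}2 a_{ij}$ and the relevant determinant is a nonzero multiple of the $2\times2$ determinant), and the second provides a ``seed'' simple situation from which the rank-one Serre relation $[x_{i1}^\pm, x_{j0}^\pm] - [x_{i0}^\pm, x_{j1}^\pm] = \pm\tfrac{(\alpha_i,\alpha_j)}2\{x_{i0}^\pm,x_{j0}^\pm\}$ can be leveraged.

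The key steps, in order: (1) From \eqref{eq:relexHX2'} with $i=j$ one gets $[\tilde h_{i1}, x_{i0}^\pm] = \pm(\alpha_i,\alpha_i)x_{i1}^\pm$, consistent with \eqref{eq:rec}; then show by induction on $r$ that $[h_{i0}, x_{jr}^\pm] = \pm(\alpha_i,\alpha_j)x_{jr}^\pm$ for all $r$ (relation \eqref{eq:relHX}), using that $\ad(h_{i0})$ is a derivation and commutes appropriately with $\ad(\tilde h_{j1})$ via the $r=0,1$ cases of \eqref{eq:relHH'}. (2) Prove $[\tilde h_{i1}, x_{jr}^\pm] = \pm(\alpha_i,\alpha_j)x_{j,r+1}^\pm + (\text{correction})$; more precisely, establish \eqref{eq:relexHX} for all $r,s$ by induction, the base case $(0,0)$ being \eqref{eq:relexHX2'} and the inductive step consuming $[h_{i1},x_{j,s+1}^\pm]-[h_{i1},x_{js}^\pm]$-type identities. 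The standard trick is to show $[\tilde h_{i1}, x_{jr}^\pm]$ and $[\tilde h_{j1}, x_{ir}^\pm]$ satisfy a compatible recursion once \eqref{eq:relHH'} is promoted to $[\tilde h_{i1},\tilde h_{j1}]=0$ — which itself must be derived first from \eqref{eq:relHH'} plus \eqref{eq:th}. (3) Derive \eqref{eq:relHH} in full: $[h_{ir}, h_{js}]=0$ for all $r,s$, by writing $h_{ir}=[x_{ir}^+,x_{i0}^-]$, using the Jacobi identity, and inducting with the help of the already-established \eqref{eq:relexHX} and \eqref{eq:relexXX}. (4) Derive \eqref{eq:relXX}: $[x_{ir}^+, x_{js}^-]=\delta_{ij}h_{i,r+s}$. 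For $i=j$ this is a lift of \eqref{eq:rec}/\eqref{eq:relXX'} via the recursion and Jacobi; for $i\ne j$ one must show $[x_{ir}^+,x_{js}^-]=0$, starting from the $r+s\le1$ cases in \eqref{eq:relXX'} and pushing up the filtration. (5) Derive the higher $x$-$x$ relations \eqref{eq:relexXX} for all $r,s$ from the single instance \eqref{eq:relexXX'}, again by induction using $x_{i,r+1}^\pm=\pm(\alpha_i,\alpha_i)^{-1}[\tilde h_{i1},x_{ir}^\pm]$ and the Jacobi identity; this is where the interplay between the two indices and the $2\times 2$ invertibility is used to handle the case $i\ne j$. (6) Finally, derive the full Drinfeld--Serre relations \eqref{eq:relDS} (symmetrized sum over $S_b$, $b=1-a_{ij}$, with arbitrary indices $r_1,\dots,r_b$) from the single ``level-zero'' Serre relation \eqref{eq:relDS'}: bracket \eqref{eq:relDS'} repeatedly with $\tilde h_{i1}$ and $\tilde h_{j1}$, collect terms, and use the already-proven relations to convert the result into the symmetrized form. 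This last reduction is the classical part of Levendorskii's argument and is where the hypothesis ``$a_{ij}=-1$ for some pair'' is genuinely needed, to seed the induction that propagates the Serre relations to all pairs.

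I expect Step (6) — deducing the general Drinfeld--Serre relations \eqref{eq:relDS} from the single rank-one instance \eqref{eq:relDS'} — to be the main obstacle, for two reasons. First, the combinatorics of commuting $\ad(\tilde h_{i1})^{m}$ past a nested Serre bracket and re-expressing everything as a symmetrized sum over $S_b$ is delicate: one gets a system of linear identities among the ``candidate'' Serre expressions indexed by $(r_1,\dots,r_b)$, and one must show the coefficient matrix is nondegenerate so that vanishing of the $(0,\dots,0)$ instance forces vanishing of all of them — and this is precisely where the arithmetic hypotheses on the $a_{ij}$ are invoked, via a Vandermonde-type nonvanishing. Second, the Kac--Moody setting (as opposed to Levendorskii's finite-dimensional one) means $1-a_{ij}$ can be arbitrarily large, so one cannot rely on small-case bookkeeping; the argument must be uniform in $b$. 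A secondary difficulty is making sure the inductions in Steps (2)--(5) are correctly interleaved — e.g. \eqref{eq:relexXX} in the $i\ne j$ case may require \eqref{eq:relHH} which in turn uses \eqref{eq:relexXX} — so I would organize the whole proof as one simultaneous induction on the total degree $r$ (or $r+s$), proving all the relations up to degree $r$ before passing to $r+1$, with the Serre relations handled as a final, separate lemma once all the quadratic-type relations are in place.
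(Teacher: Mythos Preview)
Your outline misidentifies the main obstacle and misplaces where the hypothesis ``$a_{ij}=-1$ for some pair'' enters.

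The paper does not re-derive all of \eqref{eq:relHH}--\eqref{eq:relDS} from scratch. It invokes Levendorskii's result: the relations \eqref{eq:relHH'}--\eqref{eq:relDS'} together with the single extra relation
\[
  [[\tilde h_{i1}, x_{i1}^+], x_{i1}^-] + [x_{i1}^+, [\tilde h_{i1}, x_{i1}^-]] = 0
\]
(equivalently $[h_{i1},h_{i2}]=0$) already present $Y(\g')$, and Levendorskii's argument carries over verbatim to any symmetrizable Kac--Moody algebra satisfying the $2\times 2$ invertibility condition. Hence the \emph{entire} content of \thmref{thm:deduction} is to show that $[h_{i1},h_{i2}]=0$ is redundant, i.e.\ follows from \eqref{eq:relHH'}--\eqref{eq:relDS'} alone. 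Your Step~(3) treats this as a routine Jacobi induction, but it is not: the naive induction is circular, because pushing the $h$--$h$ commutator to level $(1,2)$ requires the $i=j$ case of \eqref{eq:relexHX} at a level that itself needs $[\tilde h_{i1},h_{i2}]=0$.

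The actual mechanism is as follows. One constructs $\tilde h_{i2} = h_{i2}-h_{i0}h_{i1}+\tfrac13 h_{i0}^3$ and proves $[\tilde h_{i2}, x_{j0}^\pm] = \pm(\alpha_i,\alpha_j)x_{j2}^\pm \pm\tfrac1{12}(\alpha_i,\alpha_j)^3 x_{j0}^\pm$ from low-level instances of \eqref{eq:relexHX} only. Using $\tilde h_{i1}$ and $\tilde h_{i2}$ one then establishes the Serre relations \eqref{eq:relDS} for the small index patterns $(0,\dots,0)$, $(1,0,\dots,0)$, $(2,0,\dots,0)$, $(1,1,0,\dots,0)$; this step uses only the $2\times 2$ invertibility, via exactly the linear-algebra argument you sketch, and does \emph{not} use $a_{ij}=-1$. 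That hypothesis enters only now: when $b=1-a_{ij}=2$ the relation $[x_{i1}^+,[x_{i1}^+,x_{j0}^+]]=0$ is available, and a long direct computation (bracketing successively with $x_{j1}^-$, then twice with $x_{i0}^-$, and simplifying) yields $[h_{i1},h_{i2}]=0$ for that specific $i$. A separate propagation lemma then shows $[h_{i1},h_{i2}]=0 \Rightarrow [h_{j1},h_{j2}]=0$ whenever $(\alpha_i,\alpha_j)\ne 0$, so connectedness of the Dynkin diagram finishes. In short: the hard relation is $[h_{i1},h_{i2}]=0$, not the higher Serre relations; and $a_{ij}=-1$ seeds the former, not the latter.
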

In this algebra, we also define elements $x_{ir}^{\pm}$ and $h_{ir}$ for $r\ge 2$ using \eqref{eq:rec}.

\begin{Remark}\label{CMinv}  If $\g$ is of affine type, then $\g$ satisfies the conditions of the previous theorem 
provided it is not of type $A^{(1)}_1$ or $A^{(2)}_2$. Indeed, that $(a_{kl})_{k,l\in \{i,j\}}$ is invertible for $i\neq j$ is due to \cite[Proposition 4.7(b)]{Ka-book}, and the
existence of a pair $(i,j)$ such that $a_{ij}=-1$ can be seen by inspection of the corresponding Dynkin diagram (see \S 4.7 of \cite{Ka-book}). 
\end{Remark}
Observe that the statement of Theorem 1.2 in \cite{Lev} is precisely that $Y(\g)$ (where $\g$ is finite-dimensional and simple) is isomorphic 
to the algebra generated by the elements $x_{i0}^{\pm},h_{i0}$ and $x_{i1}^{\pm},h_{i1}$, with $i\in I$, subject to the defining relations \eqref{eq:relHH'}-\eqref{eq:relDS'} 
together with the relation 
\begin{equation}\label{eq:relHH2}
   [[\tilde h_{i 1}, x^+_{i 1}], x^-_{i 1}]
   +  [x^+_{i 1}, [\tilde h_{i 1}, x^-_{i 1}]] = 0.
\end{equation}
Moreover, Levendorskii's argument also applies in the case where $\g$ is a symmetrizable Kac-Moody algebra satisfying the conditions of \thmref{thm:deduction}.

Unfortunately, for our purposes, the relation \eqref{eq:relHH2} is still difficult to work with. In the case where $\g$ is of type $\algsl_{n+1}$ or $\hat\algsl_{n+1}$ with $n\ge 3$, this difficulty was addressed by the first named author in \cite{Gu2} where it was shown that the relation \eqref{eq:relHH2} can be deduced from those given in the statement of \thmref{thm:deduction}.

\subsection{Proof of \thmref{thm:deduction}}\label{Ssec:Pf1}

 As consequence of the remarks made at the end of the previous subsection, to prove \thmref{thm:deduction} it suffices to show that the relation \eqref{eq:relHH2} can be derived from \eqref{eq:relHH'}-\eqref{eq:relDS'}. To prove this, we will proceed as follows: 
First, we establish that some of the relations \eqref{eq:relHH}-\eqref{eq:relDS} can be derived from \eqref{eq:relHH'}-\eqref{eq:relDS'} for certain values of $i,j$ and $r,s$. 
Then, following Levendorskii's argument, we use these relations to establish a sequence of lemmas and propositions which allow us to conclude that \eqref{eq:relHH2} is indeed satisfied for all $i\in I$.

Our first goal is the construction of an element ${\Tilde{h}}_{i 2}$
such that $[{\Tilde{h}}_{i 2},x^\pm_{i 0}] = \pm (\alpha_i,\alpha_i)
x^\pm_{i 2} \pm (\alpha_i,\alpha_i)^3 x^\pm_{i 0}/12$.
This was done in \cite[Cor.~1.5]{Lev}. We reproduce the proof in order to point out that the argument does not use \eqref{eq:relHH2}. From this point on we assume that the Cartan matrix of $\g$ satisfies the assumptions of \thmref{thm:deduction} and that the elements $x_{i0}^{\pm},h_{i0}$ and $h_{i1}$, for $i\in I$, satisfy only relations \eqref{eq:relHH'}-\eqref{eq:relDS'}.

\begin{Lemma}\label{lem:HX} The following relations are satisfied for all $i,j\in I$ and $r\in \Z_{\geq 0}$:
  \begin{equation*}
     [h_{i 0}, x^\pm_{j r}] = \pm(\alpha_i,\alpha_j) x^\pm_{j r},
\qquad
    [\tilde h_{i 1}, x^\pm_{j r}] = \pm(\alpha_i,\alpha_j) x^\pm_{j,r+1}.    
  \end{equation*}
\end{Lemma}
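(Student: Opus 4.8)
The plan is to prove both identities by induction on $r$, using the recursive definition $x^\pm_{j,r+1} = \pm(\alpha_j,\alpha_j)^{-1}[\tilde h_{j1}, x^\pm_{jr}]$ together with the commutativity relation $[h_{i0}, \tilde h_{j1}] = 0$ (which follows from \eqref{eq:relHH'} and the definition \eqref{eq:th}, since $\tilde h_{j1} = h_{j1} - \frac12 h_{j0}^2$ is a polynomial in $h_{j0}, h_{j1}$, all of which commute with $h_{i0}$). The base case $r=0$ for the first identity is exactly \eqref{eq:relHX'}, and the base case for the second identity is \eqref{eq:relexHX2'}.

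First I would handle the first identity. Assume $[h_{i0}, x^\pm_{jr}] = \pm(\alpha_i,\alpha_j)x^\pm_{jr}$. Then, writing $x^\pm_{j,r+1} = \pm(\alpha_j,\alpha_j)^{-1}[\tilde h_{j1}, x^\pm_{jr}]$ and using the Jacobi identity together with $[h_{i0},\tilde h_{j1}]=0$, I get
\begin{equation*}
[h_{i0}, x^\pm_{j,r+1}] = \pm(\alpha_j,\alpha_j)^{-1}[\tilde h_{j1}, [h_{i0}, x^\pm_{jr}]] = \pm(\alpha_j,\alpha_j)^{-1}(\pm(\alpha_i,\alpha_j))[\tilde h_{j1}, x^\pm_{jr}] = \pm(\alpha_i,\alpha_j)x^\pm_{j,r+1},
\end{equation*}
which closes the induction. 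For the second identity, I would similarly assume $[\tilde h_{i1}, x^\pm_{jr}] = \pm(\alpha_i,\alpha_j)x^\pm_{j,r+1}$ and compute, again via Jacobi and the fact that $[\tilde h_{i1},\tilde h_{j1}]=0$ (a consequence of \eqref{eq:relHH'}, as both are polynomials in the commuting elements $h_{k0},h_{k1}$),
\begin{equation*}
[\tilde h_{i1}, x^\pm_{j,r+1}] = \pm(\alpha_j,\alpha_j)^{-1}[\tilde h_{j1}, [\tilde h_{i1}, x^\pm_{jr}]] = \pm(\alpha_i,\alpha_j)(\pm(\alpha_j,\alpha_j)^{-1})[\tilde h_{j1}, x^\pm_{jr}] = \pm(\alpha_i,\alpha_j)x^\pm_{j,r+1}.
\end{equation*}

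I do not anticipate a genuine obstacle here; the only point requiring a little care is making explicit that $h_{i0}$ and $\tilde h_{i1}$ commute with $\tilde h_{j1}$ for all $i,j$ — this is immediate from \eqref{eq:relHH'} once one observes that all the relevant elements lie in the commutative subalgebra generated by $\{h_{k0}, h_{k1} : k\in I\}$, so that no higher relations $h_{kr}$ with $r\ge 2$ are needed. I would state this commutativity as a preliminary remark before running the two inductions in parallel. Note also that the hypotheses on the Cartan matrix from \thmref{thm:deduction} are not actually used in this particular lemma; only relations \eqref{eq:relHH'}--\eqref{eq:relDS'}, and in fact only \eqref{eq:relHH'}, \eqref{eq:relHX'}, and \eqref{eq:relexHX2'}, enter the argument.
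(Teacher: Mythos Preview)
Your approach is correct and is exactly the paper's: induction on $r$ via the recursive formula $x^\pm_{j,r+1}=\pm(\alpha_j,\alpha_j)^{-1}[\tilde h_{j1},x^\pm_{jr}]$ together with $[\tilde h_{i1},\tilde h_{j1}]=0$ (and $[h_{i0},\tilde h_{j1}]=0$) from \eqref{eq:relHH'}. There is a harmless subscript slip in your second displayed computation: after inserting the inductive hypothesis $[\tilde h_{i1},x^\pm_{jr}]=\pm(\alpha_i,\alpha_j)x^\pm_{j,r+1}$ you obtain $[\tilde h_{j1},x^\pm_{j,r+1}]$ (not $[\tilde h_{j1},x^\pm_{jr}]$), and the conclusion of the inductive step is $\pm(\alpha_i,\alpha_j)x^\pm_{j,r+2}$ (not $x^\pm_{j,r+1}$), which is precisely the statement at level $r+1$.
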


\begin{proof}
  One can show the second equality by induction on $r$. If $r = 0$, it is
  nothing but \eqref{eq:relexHX2'}. The general case follows by using \eqref{eq:rec}, $[\tilde h_{i 1}, \tilde h_{j 1}] = 0$ (which follows immediately from \eqref{eq:relHH'}) and the inductive assumption.
The first equality can be proven in the same way.
\end{proof}

\begin{Lemma}\label{lem:XXii}
  The relation \eqref{eq:relXX} holds when $i=j$, $r+s\le 2$.
\end{Lemma}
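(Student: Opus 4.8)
The plan is to work inside the algebra generated by $x^\pm_{i0},h_{i0},h_{i1}$ ($i\in I$) subject only to the relations \eqref{eq:relHH'}--\eqref{eq:relDS'}, where $x^\pm_{ir}$ and $h_{ir}$ are the elements produced by \eqref{eq:rec}; in particular $x^\pm_{i,r+1}=\pm(\alpha_i,\alpha_i)^{-1}[\tilde h_{i1},x^\pm_{ir}]$ and $h_{i2}\defeq[x^+_{i2},x^-_{i0}]$. For $i=j$, the only instances of \eqref{eq:relXX} with $r+s\le 2$ that are not already among the assumptions \eqref{eq:relXX'} are $(r,s)=(2,0)$, $(1,1)$ and $(0,2)$, and the case $(2,0)$ is exactly the defining formula for $h_{i2}$. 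So it remains to prove $[x^+_{i1},x^-_{i1}]=h_{i2}$ and $[x^+_{i0},x^-_{i2}]=h_{i2}$, and both follow from a single application of the Jacobi identity together with the earlier lemmas. Throughout one uses that $[\tilde h_{i1},h_{i1}]=0$, which is immediate from \eqref{eq:relHH'} and $\tilde h_{i1}\defeq h_{i1}-\tfrac12 h_{i0}^2$.

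For the case $(r,s)=(1,1)$ I would write $x^-_{i1}=-(\alpha_i,\alpha_i)^{-1}[\tilde h_{i1},x^-_{i0}]$ and expand
\[
[x^+_{i1},x^-_{i1}]=-(\alpha_i,\alpha_i)^{-1}\bigl([[x^+_{i1},\tilde h_{i1}],x^-_{i0}]+[\tilde h_{i1},[x^+_{i1},x^-_{i0}]]\bigr).
\]
By \lemref{lem:HX} one has $[x^+_{i1},\tilde h_{i1}]=-(\alpha_i,\alpha_i)x^+_{i2}$, and $[x^+_{i1},x^-_{i0}]=h_{i1}$ by \eqref{eq:relXX'}; so the first bracket contributes $-(\alpha_i,\alpha_i)[x^+_{i2},x^-_{i0}]=-(\alpha_i,\alpha_i)h_{i2}$ and the second vanishes, giving $[x^+_{i1},x^-_{i1}]=h_{i2}$.

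For the case $(r,s)=(0,2)$ I would argue in exactly the same manner, now using $x^-_{i2}=-(\alpha_i,\alpha_i)^{-1}[\tilde h_{i1},x^-_{i1}]$, the identities $[x^+_{i0},\tilde h_{i1}]=-(\alpha_i,\alpha_i)x^+_{i1}$ (\lemref{lem:HX}) and $[x^+_{i0},x^-_{i1}]=h_{i1}$ (\eqref{eq:relXX'}), and then feeding in the case $(1,1)$ just established to rewrite $[x^+_{i1},x^-_{i1}]=h_{i2}$; this yields $[x^+_{i0},x^-_{i2}]=h_{i2}$.

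I do not expect any genuine obstacle here — the whole statement reduces to two three-line Jacobi computations. The only points requiring care are doing the case $(1,1)$ before $(0,2)$ (the latter computation uses the former), and checking at each step that every bracket that appears is either one of the assumed relations \eqref{eq:relHH'}--\eqref{eq:relDS'} or one already covered by \lemref{lem:HX} and the definition \eqref{eq:rec}, so that the derivation genuinely stays within the minimalistic presentation.
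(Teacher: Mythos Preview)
Your proof is correct and is essentially the same as the paper's: both amount to applying the Jacobi identity to $[\tilde h_{i1},\cdot]$ together with \lemref{lem:HX}, \eqref{eq:relXX'}, and $[\tilde h_{i1},h_{i1}]=0$. The paper packages the $(1,1)$ case as $0=[h_{i1},\tilde h_{i1}]=[[x^+_{i1},x^-_{i0}],\tilde h_{i1}]=-(\alpha_i,\alpha_i)([x^+_{i2},x^-_{i0}]-[x^+_{i1},x^-_{i1}])$, which is exactly your expansion read in reverse; the $(0,2)$ case is likewise handled by the analogous one-line Jacobi computation starting from $[x^+_{i0},x^-_{i1}]=h_{i1}$.
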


\begin{proof}
  From \eqref{eq:relHH'} with $r, s\le 1$ and \eqref{eq:relXX'} with
  $i=j$, $(r,s) = (1,0)$, we have
  \begin{equation*}
    0 = [h_{i 1}, \tilde h_{i 1}] =
    [[x^+_{i 1}, x^-_{i 0}], \tilde h_{i 1}]
    = - (\alpha_i, \alpha_i) 
    \left([x^+_{i 2}, x^-_{i 0}] - 
     [x^+_{i 1}, x^-_{i 1}]\right),
  \end{equation*}
where we have used \lemref{lem:HX} in the last equality. Therefore
\begin{equation*}
  [x^+_{i 1}, x^-_{i 1}] = [x^+_{i 2}, x^-_{i 0}] = h_{i 2}.
\end{equation*}

Similarly we use \eqref{eq:relXX'} with $(r,s) = (0,1)$ instead to get
\(
 h_{i2}= [x^+_{i 1}, x^-_{i 1}] = [x^+_{i 0}, x^-_{i 2}].
\)
\end{proof}

\begin{Lemma}\label{lem:exXXii}
  The relation \eqref{eq:relexXX} holds when $i=j$, $(r,s)=(1,0)$, i.e.,
  \begin{equation}\label{eq:relexXX10}
    [x^\pm_{i 2}, x^\pm_{i 0}]
    = \pm \frac{(\alpha_i,\alpha_i)}2 \left(x^\pm_{i 1} x^\pm_{i 0} + 
     x^\pm_{i 0} x^\pm_{i 1} \right).
  \end{equation}
\end{Lemma}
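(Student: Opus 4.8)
The plan is to derive \eqref{eq:relexXX10} from the minimalistic relations \eqref{eq:relHH'}--\eqref{eq:relDS'} by applying $\ad(\tilde h_{i1})$ to the relation \eqref{eq:relexXX'} taken with $j=i$, namely
\[
  [x^\pm_{i1},x^\pm_{i0}] = \pm\frac{(\alpha_i,\alpha_i)}{2}\bigl(x^\pm_{i0}x^\pm_{i0}+x^\pm_{i0}x^\pm_{i0}\bigr) = \pm (\alpha_i,\alpha_i)\,(x^\pm_{i0})^2,
\]
which is the $(r,s)=(0,0)$ case of the extended Serre-type relation with coinciding indices. (Strictly speaking \eqref{eq:relexXX'} is stated for $i\neq j$ as far as being a \emph{listed} relation goes, but the $i=j$ instance is one of the relations \eqref{eq:relexXX} and is exactly the content we are allowed to use via the first paragraph of Subsection \ref{Ssec:Pf1}; if needed I would first record that the $i=j$ case of \eqref{eq:relexXX} with $(r,s)=(0,0)$ follows from \eqref{eq:rec} and \eqref{eq:relexHX2'}.)

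The key step is the commutator identity $[\tilde h_{i1},x^\pm_{i0}]=\pm(\alpha_i,\alpha_i)x^\pm_{i1}$ from \lemref{lem:HX} together with $[\tilde h_{i1},x^\pm_{i1}]=\pm(\alpha_i,\alpha_i)x^\pm_{i2}$, also from \lemref{lem:HX}. Applying $\ad(\tilde h_{i1})$ to the left-hand side $[x^+_{i1},x^+_{i0}]$ and using the Jacobi identity gives
\[
  [\tilde h_{i1},[x^+_{i1},x^+_{i0}]] = [[\tilde h_{i1},x^+_{i1}],x^+_{i0}] + [x^+_{i1},[\tilde h_{i1},x^+_{i0}]] = (\alpha_i,\alpha_i)\bigl([x^+_{i2},x^+_{i0}] + [x^+_{i1},x^+_{i1}]\bigr) = (\alpha_i,\alpha_i)[x^+_{i2},x^+_{i0}],
\]
since $[x^+_{i1},x^+_{i1}]=0$. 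Applying $\ad(\tilde h_{i1})$ to the right-hand side $(\alpha_i,\alpha_i)(x^+_{i0})^2$ gives $(\alpha_i,\alpha_i)\bigl((\alpha_i,\alpha_i)x^+_{i1}x^+_{i0} + (\alpha_i,\alpha_i)x^+_{i0}x^+_{i1}\bigr)$ by the Leibniz rule for $\ad(\tilde h_{i1})$ and \lemref{lem:HX} again. Cancelling the common nonzero factor $(\alpha_i,\alpha_i)$ (nonzero because $\g$ is symmetrizable with indecomposable Cartan matrix, so $(\alpha_i,\alpha_i)=d_i a_{ii}\neq 0$) yields exactly \eqref{eq:relexXX10} for the $+$ case; the $-$ case is identical after replacing $x^+$ by $x^-$ and tracking the sign, which changes only the overall sign as in the statement.

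I do not expect any genuine obstacle here: the computation is a one-step application of $\ad(\tilde h_{i1})$ plus the Jacobi identity, and the only thing to be careful about is (i) justifying that the $i=j$ instances of the relations we invoke are legitimately available from \eqref{eq:relHH'}--\eqref{eq:relDS'} via the recursion \eqref{eq:rec}, and (ii) confirming $[x^\pm_{i1},x^\pm_{i1}]=0$ and the divisibility by $(\alpha_i,\alpha_i)$. The mildly delicate point, if any, is bookkeeping the signs in the $-$ case and making sure the right-hand side of \eqref{eq:relexXX'} with $j=i$ is correctly read as $\pm(\alpha_i,\alpha_i)(x^\pm_{i0})^2$ rather than with a spurious factor of $\tfrac12$.
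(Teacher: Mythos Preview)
Your approach is exactly the paper's: apply $\ad(\tilde h_{i1})$ to \eqref{eq:relexXX'} with $i=j$ and use \lemref{lem:HX}. Two small corrections, though. First, \eqref{eq:relexXX'} carries no restriction $i\neq j$, so there is nothing to justify. Second, and more importantly, you have a factor-of-$2$ slip: the left-hand side of \eqref{eq:relexXX'} with $i=j$ is $[x^\pm_{i1},x^\pm_{i0}]-[x^\pm_{i0},x^\pm_{i1}]=2[x^\pm_{i1},x^\pm_{i0}]$, so the correct starting identity is
\[
  [x^\pm_{i1},x^\pm_{i0}]=\pm\tfrac{(\alpha_i,\alpha_i)}{2}(x^\pm_{i0})^2,
\]
\emph{with} the factor $\tfrac12$ you flagged as ``spurious''. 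With your version the computation would produce $[x^\pm_{i2},x^\pm_{i0}]=\pm(\alpha_i,\alpha_i)(x^\pm_{i1}x^\pm_{i0}+x^\pm_{i0}x^\pm_{i1})$, which is twice the right-hand side of \eqref{eq:relexXX10}. Once this is fixed the argument goes through verbatim.
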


\begin{proof} This follows immediately by applying $[\tilde h_{i 1},\cdot ]$ to \eqref{eq:relexXX'} with $i=j$.
\end{proof}

\begin{Lemma}\label{lem:HXii}
  The relation \eqref{eq:relexHX} holds when $i=j$, $(r,s) = (1,0)$,
  i.e.,
\begin{equation*}
  [ h_{i 2}, x^\pm_{i 0}] - [h_{i 1}, x^\pm_{i 1}] 
  = \pm \frac{(\alpha_i,\alpha_i)}2 \left( h_{i 1} x^\pm_{i 0}
    + x^\pm_{i 0} h_{i 1} \right).
\end{equation*}
\end{Lemma}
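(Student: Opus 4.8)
The plan is to prove Lemma~\ref{lem:HXii} by reducing it to the already-established relations, in the same spirit as Levendorskii's argument. Recall from \eqref{eq:rec} that $h_{i,r} = [x^+_{i r}, x^-_{i 0}]$, and in particular $h_{i 2} = [x^+_{i 2}, x^-_{i 0}]$ and $h_{i 1} = [x^+_{i 1}, x^-_{i 0}]$. So the left-hand side of the asserted identity is
\[
  [[x^+_{i 2}, x^-_{i 0}], x^\pm_{i 0}] - [[x^+_{i 1}, x^-_{i 0}], x^\pm_{i 1}].
\]
The natural approach is to treat the two signs separately, but really the $+$ case should be nearly trivial: $[h_{i r}, x^+_{i 0}] = (\alpha_i,\alpha_i) x^+_{i r}$ by Lemma~\ref{lem:HX} (first equality, $j = i$), so $[h_{i 2}, x^+_{i 0}] - [h_{i 1}, x^+_{i 1}] = (\alpha_i,\alpha_i)(x^+_{i 2} - x^+_{i 1})$ --- wait, that is not right either, since $[h_{i 1}, x^+_{i 1}]$ is not covered by that lemma. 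So the real content is genuinely to commute $h_{i 2}$ and $h_{i 1}$ past the raising/lowering generators with a shift, and the right-hand side $\pm\frac{(\alpha_i,\alpha_i)}{2}\{h_{i 1}, x^\pm_{i 0}\}$ is the quantum correction.

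Concretely, I would first expand $[h_{i 2}, x^\pm_{i 0}]$ using $h_{i 2} = [x^+_{i 2}, x^-_{i 0}]$ and the Jacobi identity:
\[
  [[x^+_{i 2}, x^-_{i 0}], x^\pm_{i 0}] = [x^+_{i 2}, [x^-_{i 0}, x^\pm_{i 0}]] + [[x^+_{i 2}, x^\pm_{i 0}], x^-_{i 0}].
\]
For the $+$ sign, $[x^-_{i 0}, x^+_{i 0}] = -h_{i 0}$, and $[x^+_{i 2}, x^+_{i 0}]$ is controlled by Lemma~\ref{lem:exXXii} (relation \eqref{eq:relexXX10}), so this term becomes manageable once one also uses Lemma~\ref{lem:HX} to commute $\tilde h_{i 1}$ or $h_{i 0}$ with $x^\pm_{i r}$. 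For the $-$ sign one uses $[x^-_{i 0}, x^-_{i 0}] = 0$, which kills the first term and leaves $[[x^+_{i 2}, x^-_{i 0}], x^-_{i 0}]$. In parallel, I would rewrite $[h_{i 1}, x^\pm_{i 1}]$ using $x^\pm_{i 1} = \pm(\alpha_i,\alpha_i)^{-1}[\tilde h_{i 1}, x^\pm_{i 0}]$ from \eqref{eq:rec}/\eqref{eq:th}, so the whole identity gets expressed in terms of $x^\pm_{i 0}, x^\pm_{i 1}, x^\pm_{i 2}, h_{i 0}, \tilde h_{i 1}$, and then repeatedly apply $[\tilde h_{i 1}, -]$ to the already-known relations \eqref{eq:relexXX'} and \eqref{eq:relexXX10} (as in the proof of Lemma~\ref{lem:exXXii}) to generate exactly the bracketed monomials that appear. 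The $\{h_{i 1}, x^\pm_{i 0}\}$ on the right-hand side should emerge from the cross-terms where a factor of $h_{i 0}^2/2$ (coming from $\tilde h_{i 1} = h_{i 1} - \frac12 h_{i 0}^2$) gets commuted through $x^\pm_{i 0}$ using $[h_{i 0}, x^\pm_{i 0}] = \pm(\alpha_i,\alpha_i)x^\pm_{i 0}$; one has the operator identity $[\tfrac12 h_{i 0}^2, x^\pm_{i 0}] = \pm\tfrac{(\alpha_i,\alpha_i)}{2}\{h_{i 0}, x^\pm_{i 0}\} \pm \tfrac{(\alpha_i,\alpha_i)^2}{2}x^\pm_{i 0}$ or similar, which is the source of the anticommutator.

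The main obstacle I anticipate is purely bookkeeping: matching the quantum-correction terms on the nose, since each application of $[\tilde h_{i 1}, -]$ to a quadratic relation produces anticommutators $\{x^\pm_{i r}, x^\pm_{i s}\}$ and one must keep careful track of which $h_{i 0}^2$ and $h_{i 0}$ cross-terms survive. There is no conceptual difficulty --- everything follows from the Jacobi identity plus Lemmas~\ref{lem:HX} and~\ref{lem:exXXii} and the $\algsl_2$-type relations among the $i$-indexed generators --- but the signs and the factors of $(\alpha_i,\alpha_i)$ require care, especially since the $+$ and $-$ cases are not quite symmetric (the asymmetry being exactly the $\pm$ on the right-hand side). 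A clean way to organize it is to first prove the auxiliary operator identity expressing $[h_{i 0}^2, x^\pm_{i r}]$ in terms of $\{h_{i 0}, x^\pm_{i r}\}$ and $x^\pm_{i r}$ using Lemma~\ref{lem:HX}, then substitute throughout; I expect the proof to be three or four displayed lines once that lemma is in hand.
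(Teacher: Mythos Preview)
Your plan for the $+$ sign is correct and is essentially the paper's argument: the paper applies $[\,\cdot\,, x^-_{i0}]$ to \eqref{eq:relexXX10}, which is exactly your Jacobi expansion of $[[x^+_{i2},x^-_{i0}],x^+_{i0}]$, and combines it with the identity
\[
  [h_{i1},x^\pm_{i1}] - [h_{i0},x^\pm_{i2}]
  = \pm\frac{(\alpha_i,\alpha_i)}{2}\{h_{i0},x^\pm_{i1}\},
\]
which is just \lemref{lem:HX} at $r=1$ rewritten via $\tilde h_{i1}=h_{i1}-\tfrac12 h_{i0}^2$. This replaces your plan of substituting $x^\pm_{i1}=\pm(\alpha_i,\alpha_i)^{-1}[\tilde h_{i1},x^\pm_{i0}]$ and is a little cleaner, but the content is the same.

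There is, however, a genuine gap in your treatment of the $-$ sign. Your Jacobi decomposition
\[
  [[x^+_{i2},x^-_{i0}],x^-_{i0}]
  = [x^+_{i2},[x^-_{i0},x^-_{i0}]] + [[x^+_{i2},x^-_{i0}],x^-_{i0}]
\]
is a tautology: the first term vanishes and the second term is again $[h_{i2},x^-_{i0}]$, so nothing has been gained. The paper's route for the $-$ sign is instead to apply $[\,\cdot\,, x^+_{i0}]$ to the $-$ version of \eqref{eq:relexXX10}; expanding the left side then produces $[x^+_{i0},x^-_{i2}]$, and one needs to know that this equals $h_{i2}$. That identity is available from the relations already in hand (apply $[x^+_{i0},\,\cdot\,]$ or $[x^+_{i1},\,\cdot\,]$ to $[\tilde h_{i1},x^-_{ir}]=-(\alpha_i,\alpha_i)x^-_{i,r+1}$ and use $[\tilde h_{i1},h_{i1}]=0$; it is recorded later as \lemref{lem:XXii}), but it is not a consequence of the Jacobi manipulation you wrote down. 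So for the $-$ case you must either invoke that additional fact or argue by the anti-automorphism swapping $x^+_{ir}\leftrightarrow x^-_{ir}$.
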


\begin{proof}
  We rewrite the second equality in \lemref{lem:HX} with $i=j$, $r= 1$
  as
\begin{equation}\label{eq:2.7}
  [h_{i 1}, x^\pm_{i 1}] - [h_{i 0}, x^\pm_{i 2}]
  = \pm \frac{(\alpha_i,\alpha_i)}2 \left(
    h_{i 0} x^\pm_{i 1} + x^\pm_{i 1} h_{i 0}
    \right).
\end{equation}

We apply $[\cdot, x^{\mp}_{i0}]$ to \eqref{eq:relexXX10} and combine the resulting relation with \eqref{eq:2.7} and \lemref{lem:XXii} to obtain the desired conclusion.
\end{proof}

\begin{Lemma}\label{lem:XX} Suppose that $i,j\in I$ and $i\neq j$.   The relations \eqref{eq:relXX} and \eqref{eq:relexXX} hold for any
  $r$ and $s$.
\end{Lemma}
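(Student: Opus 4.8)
The goal is to show that relations \eqref{eq:relXX} and \eqref{eq:relexXX} hold for $i\neq j$ and all $r,s\ge 0$, starting only from the minimalistic relations \eqref{eq:relHH'}--\eqref{eq:relDS'}. The strategy is a double induction on $r+s$ (or on $r$ and $s$ separately), using the auxiliary operator $[\tilde h_{i1},\,\cdot\,]$ together with Lemma~\ref{lem:HX} and the recursion \eqref{eq:rec} as the engine that raises indices.

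First I would handle \eqref{eq:relexXX} (the ``exchange'' relation for $x^\pm$). The base case $(r,s)=(0,0)$ is exactly \eqref{eq:relexXX'}. To go up, I would apply $[\tilde h_{i1},\,\cdot\,]$ and $[\tilde h_{j1},\,\cdot\,]$ to the relation already known at level $(r,s)$. By Lemma~\ref{lem:HX}, $[\tilde h_{i1}, x^\pm_{ir}] = \pm(\alpha_i,\alpha_i)x^\pm_{i,r+1}$ and $[\tilde h_{i1}, x^\pm_{js}] = \pm(\alpha_i,\alpha_j)x^\pm_{j,s+1}$, so applying $[\tilde h_{i1},\,\cdot\,]$ to the $(r,s)$-instance of \eqref{eq:relexXX} produces the $(r+1,s)$-instance plus the $(r,s+1)$-instance, each scaled by $(\alpha_i,\alpha_j)$ (on the right-hand side one uses the Leibniz rule and that $[\tilde h_{i1},\,\cdot\,]$ acts as a derivation). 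Doing the same with $[\tilde h_{j1},\,\cdot\,]$ gives a second linear combination of the $(r+1,s)$ and $(r,s+1)$ instances. The two resulting linear combinations have coefficient matrix proportional to $\begin{pmatrix} a_{ii} & a_{ij} \\ a_{ji} & a_{jj}\end{pmatrix}$ (after dividing out the symmetrizing constants), which is invertible by hypothesis; solving the $2\times 2$ system isolates both the $(r+1,s)$ and the $(r,s+1)$ instances. Running this inductively starting from $(0,0)$ yields \eqref{eq:relexXX} for all $r,s$.

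Next, with \eqref{eq:relexXX} in hand for $i\neq j$, I would prove \eqref{eq:relXX}, namely $[x^+_{ir}, x^-_{js}] = 0$ for $i\neq j$. The base case $r+s\le 1$ is \eqref{eq:relXX'}. For the inductive step, I would use the recursion $x^+_{i,r+1} = (\alpha_i,\alpha_i)^{-1}[\tilde h_{i1}, x^+_{ir}]$ to write $[x^+_{i,r+1}, x^-_{js}] = (\alpha_i,\alpha_i)^{-1}[[\tilde h_{i1}, x^+_{ir}], x^-_{js}]$, then expand via the Jacobi identity into $[\tilde h_{i1}, [x^+_{ir}, x^-_{js}]] - [x^+_{ir}, [\tilde h_{i1}, x^-_{js}]]$. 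The first term vanishes by the inductive hypothesis; the second equals $\mp(\alpha_i,\alpha_j)[x^+_{ir}, x^-_{j,s+1}]$ by Lemma~\ref{lem:HX}, and this in turn vanishes by a parallel induction on $s$ (or by the same double-induction bookkeeping). The only subtlety is making sure the induction is set up so that both indices can be raised; organizing it as an induction on $r+s$ with an inner case analysis handles this cleanly. One should also double-check the degenerate possibility $(\alpha_i,\alpha_j)=0$ separately, but then the relevant terms in \eqref{eq:relexXX} drop out and Lemma~\ref{lem:HX} gives $[\tilde h_{i1}, x^\pm_{jr}]=0$, which only makes the arguments easier.

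The main obstacle is the bookkeeping of the double induction in the proof of \eqref{eq:relexXX}: one must apply both $[\tilde h_{i1},\,\cdot\,]$ and $[\tilde h_{j1},\,\cdot\,]$ and correctly track the Leibniz-rule contributions on the right-hand side so that the $2\times 2$ system genuinely has the Cartan-submatrix as its coefficient matrix — this is precisely where the invertibility hypothesis of Theorem~\ref{thm:deduction} is consumed. Everything else is a routine Jacobi-identity manipulation once the inductive scaffolding is fixed.
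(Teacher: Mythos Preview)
Your proposal is correct and follows essentially the same approach as the paper: apply the derivations $[\tilde h_{i1},\,\cdot\,]$ and $[\tilde h_{j1},\,\cdot\,]$ to the known instance of the relation, obtain a $2\times 2$ linear system in the $(r+1,s)$- and $(r,s+1)$-instances, and invoke the invertibility of the Cartan submatrix to conclude.

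The only difference is in your treatment of \eqref{eq:relXX}: the paper handles it \emph{uniformly} with \eqref{eq:relexXX} by the same $2\times 2$ system (applying $[\tilde h_{i1},\,\cdot\,]$ and $[\tilde h_{j1},\,\cdot\,]$ directly to $[x^+_{ir},x^-_{js}]=0$), whereas you rephrase this via the recursion \eqref{eq:rec} and the Jacobi identity. Your version works, but as written the step ``this in turn vanishes by a parallel induction on $s$'' is slightly circular: the reduction $(\alpha_i,\alpha_i)[x^+_{i,r+1},x^-_{js}] = (\alpha_i,\alpha_j)[x^+_{ir},x^-_{j,s+1}]$ stays at the same total degree $r+s+1$, so a single recursion does not terminate. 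You need the companion recursion from $\tilde h_{j1}$ to close the system, which amounts exactly to the paper's $2\times 2$ argument. Also, the separate check for $(\alpha_i,\alpha_j)=0$ is unnecessary, since the $2\times 2$ matrix is then diagonal with nonzero entries and remains invertible.
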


\begin{proof}
  We prove \eqref{eq:relexXX} by induction on $r$ and $s$. The same
  argument applies also to \eqref{eq:relXX}. The initial case $r=s=0$
  is our assumption.

  Let $X^\pm(r,s)$ be the result of subtracting the right-hand side of
  \eqref{eq:relexXX} from the left-hand side. We apply $[\tilde h_{i 1}, \cdot]$ and
  $[\tilde h_{j 1},\cdot]$ to \eqref{eq:relexXX} to get
  \begin{equation*}
    \begin{split}
    & 0 = (\alpha_i,\alpha_i) X^\pm(r+1,s) + (\alpha_i,\alpha_j) X^\pm(r,s+1),
\\
    & 0 = (\alpha_i,\alpha_j) X^\pm(r+1,s) + (\alpha_j,\alpha_j) X^\pm(r,s+1).
    \end{split}
  \end{equation*}
Since the determinant of
\(
\begin{pmatrix}
  (\alpha_i,\alpha_i) &  (\alpha_i,\alpha_j) \\
   (\alpha_j,\alpha_i) & (\alpha_j,\alpha_j)
\end{pmatrix}
\) is nonzero by assumption, we have $X^\pm(r+1,s) = 0 = X^\pm(r,s+1)$. Therefore, the assertion is true by induction. 
\end{proof}

\begin{Lemma}\label{lem:HXij}
Suppose that $i,j\in I$. Then \eqref{eq:relexHX} holds for $(r,s)=(1,0)$.
\end{Lemma}

\begin{proof}
The case $i=j$ is provided by \lemref{lem:HXii}, so let's assume that $i\ne j$.  We prove the $+$ case, the $-$ case can be proved in the same way using $h_{i2} = [x_{i0}^+, x_{i2}^-]$, which holds by \lemref{lem:XXii}. (Recall that, by definition, $h_{i2} = [x_{i2}^+, x_{i0}^-]$.) We simply use \lemref{lem:XX}:
  \begin{align*}
    [h_{i2}, x^+_{j0}] = {} & [ [x^+_{i2}, x^-_{i 0}], x^+_{j 0}]
    = [ [x^+_{i2}, x^+_{j0}], x^-_{i 0}]
\\
    = {} & [ [x^+_{i1}, x^+_{j1}], x^-_{i 0}]
    + \frac{(\alpha_i,\alpha_j)}2
    [x^+_{i1} x^+_{j0} + x^+_{j0} x^+_{i1}, x^-_{i 0}]
\\
    = {} & [h_{i1}, x^+_{j1}]
    + \frac{(\alpha_i,\alpha_j)}2 
    \left( h_{i1} x^+_{j0} + x^+_{j0} h_{i1}\right). \qedhere
\end{align*}
\end{proof}

We are now prepared to introduce the element $\tilde h_{i 2}$. For each $i\in I$, we define $\tilde h_{i 2}$ by
\begin{equation}\label{eq:th2}
  \tilde h_{i 2} =  h_{i 2} - h_{i 0} h_{i 1} + \frac13 h_{i 0}^3.
\end{equation}

The next proposition is a special case of Lemma 1.4 in \cite{Lev}. 

\begin{Proposition}\label{prop:hx} For any $i,j\in I$, the following identity holds:
\begin{equation}\label{eq:hx2}
  [\tilde h_{i 2}, x^\pm_{j 0}]
  = \pm (\alpha_i,\alpha_j) x^\pm_{j 2}
  \pm \frac1{12} (\alpha_i,\alpha_j)^3 x^\pm_{j 0}.
\end{equation}
\end{Proposition}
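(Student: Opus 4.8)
The plan is to verify \eqref{eq:hx2} by direct computation, reducing everything to relations already established in Lemmas \ref{lem:HX}--\ref{lem:HXij}. First I would observe that it suffices to treat the $+$ case, the $-$ case being entirely analogous (as in \lemref{lem:HXij}). The strategy is to compute $[h_{i2}, x^+_{j0}]$, $[h_{i0}h_{i1}, x^+_{j0}]$ and $[h_{i0}^3, x^+_{j0}]$ separately and then combine them according to the definition \eqref{eq:th2} of $\tilde h_{i2}$.

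The term $[h_{i0}^3, x^+_{j0}]$ is immediate from \eqref{eq:relHX'} (or \lemref{lem:HX} with $r=0$): since $[h_{i0}, x^+_{j0}] = (\alpha_i,\alpha_j)x^+_{j0}$, one gets $[h_{i0}^3, x^+_{j0}] = ((\alpha_i,\alpha_j)^3 + 3(\alpha_i,\alpha_j)^2\langle\cdots\rangle + \cdots)x^+_{j0}$ — more precisely, conjugating $x^+_{j0}$ by $h_{i0}$ shifts the eigenvalue, so $h_{i0}^3 x^+_{j0} = x^+_{j0}(h_{i0}+(\alpha_i,\alpha_j))^3$, giving a clean expression. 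For $[h_{i0}h_{i1}, x^+_{j0}]$ I would use the Leibniz rule together with \lemref{lem:HX} to express $[h_{i1},x^+_{j0}]$ in terms of $\tilde h_{i1}$ and $h_{i0}^2$: since $h_{i1} = \tilde h_{i1} + \tfrac12 h_{i0}^2$, we have $[h_{i1}, x^+_{j0}] = (\alpha_i,\alpha_j)x^+_{j1} + \tfrac12[h_{i0}^2, x^+_{j0}]$, and the last bracket is again handled by the eigenvalue shift. The key input for the $h_{i2}$ term is \lemref{lem:HXij} (for $i\neq j$) or \lemref{lem:HXii} (for $i=j$), which gives $[h_{i2}, x^+_{j0}] = [h_{i1}, x^+_{j1}] + \tfrac{(\alpha_i,\alpha_j)}{2}\{h_{i1}, x^+_{j0}\}$; one then rewrites $[h_{i1}, x^+_{j1}]$ via the same substitution $h_{i1} = \tilde h_{i1} + \tfrac12 h_{i0}^2$ and applies \lemref{lem:HX} again, this time to produce an $x^+_{j2}$ term. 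Throughout, the relation $[\tilde h_{i1}, x^+_{j0}] = (\alpha_i,\alpha_j)x^+_{j1}$ and its iterate $[\tilde h_{i1}, x^+_{j1}] = (\alpha_i,\alpha_j)x^+_{j2}$ (both from \lemref{lem:HX}) are the workhorses for generating the higher $x^+_{jr}$.

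Assembling $[\tilde h_{i2}, x^+_{j0}] = [h_{i2},x^+_{j0}] - [h_{i0}h_{i1},x^+_{j0}] + \tfrac13[h_{i0}^3,x^+_{j0}]$, the $x^+_{j2}$ coefficient should come out to $(\alpha_i,\alpha_j)$ and the coefficient of $x^+_{j0}$ to $\tfrac{1}{12}(\alpha_i,\alpha_j)^3$, with all intermediate $x^+_{j1}$-terms and all the mixed $h_{i0}$-weighted terms cancelling — this cancellation is exactly the reason for the particular coefficients $-1$ and $+\tfrac13$ in the definition of $\tilde h_{i2}$, mirroring the analogous phenomenon that made $\tilde h_{i1}$ act cleanly. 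The main obstacle is purely bookkeeping: keeping track of the non-commutativity of $h_{i0}$ with $x^+_{j1}$ and $x^+_{j2}$ when expanding the anticommutators $\{h_{i1}, x^+_{j0}\}$ and the products $h_{i0}h_{i1}x^+_{j0}$, since $[h_{i0}, x^+_{jr}] = (\alpha_i,\alpha_j)x^+_{jr}$ for all $r$ (\lemref{lem:HX}) must be invoked repeatedly and signs are easy to mishandle. There is no conceptual difficulty and, crucially, relation \eqref{eq:relHH2} is nowhere needed — which is the point the authors wish to emphasize.
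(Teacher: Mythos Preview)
Your proposal is correct and follows essentially the same route as the paper: both expand $[\tilde h_{i2}, x^\pm_{j0}]$ into the three pieces $[h_{i2},x^\pm_{j0}]$, $[h_{i0}h_{i1},x^\pm_{j0}]$, $\frac13[h_{i0}^3,x^\pm_{j0}]$, invoke \lemref{lem:HXii}/\lemref{lem:HXij} for the first piece and \lemref{lem:HX} for the rest, and then collect terms. The only cosmetic difference is that you substitute $h_{i1}=\tilde h_{i1}+\tfrac12 h_{i0}^2$ explicitly to generate $x^\pm_{j2}$, whereas the paper keeps $h_{i1}$ and applies \lemref{lem:HX} directly; the bookkeeping is equivalent.
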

\begin{proof}
This follows from \lemref{lem:HXii} and \lemref{lem:HXij}. Here are the details for the sake of the reader. 
{\allowdisplaybreaks[4]
  \begin{align*}
[\tilde h_{i 2}, x^\pm_{j 0}] = {} & [h_{i 2}, x^\pm_{j 0}]  - [h_{i 0} h_{i 1}, x^\pm_{j 0}] + \frac13 [h_{i 0}^3, x^\pm_{j 0}]
\\
     = {} &  [h_{i 1}, x^\pm_{j 1}] 
     \pm (\alpha_i,\alpha_j) \Biggl[
    \frac12 \left(
       h_{i 1} x^\pm_{j 0} + x^\pm_{j 0} h_{i 1}
       \right)
       - \left(
         x^\pm_{j 0} h_{i 1} + h_{i 0} x^\pm_{j 1}
         \right)
         - \frac12
         \left(h_{i 0}^2 x^\pm_{j 0} + h_{i 0} x^\pm_{j 0} h_{i 0}
           \right)
\\
    &\ + \frac13 \left(
      x^\pm_{j 0} h_{i 0}^2 + h_{i 0} x^\pm_{j 0} h_{i 0}
      + h_{i 0}^2 x^\pm_{j 0}
      \right)\Biggr]
      \qquad \text{by  \lemref{lem:HXij}, \eqref{eq:relHX'} and \eqref{eq:relexHX2'}};
\\
  = {} &
  [h_{i 0}, x^\pm_{j 2}]
     \pm (\alpha_i,\alpha_j) \Biggl[
      \frac12 \left( h_{i 0} x^\pm_{j 1} + x^\pm_{j 1}h_{i 0}
       \right)
       + \frac12 [h_{i 1}, x^\pm_{j 0}] - h_{i 0} x^\pm_{j 1}
       \\
       &\ + \frac16\left(
         2 x^\pm_{j 0} h_{i 0}^2 - h_{i 0} x^\pm_{j 0} h_{i 0}
         - h_{i 0}^2 x^\pm_{j 0}
         \right)\Biggr]
         \qquad \text{by  \lemref{lem:HX}; }
\\
  = {} & \pm (\alpha_i,\alpha_j) \Biggl[
  x^\pm_{j 2} + \frac12 \left([h_{i 1}, x^\pm_{j 0}]
    - [h_{i 0}, x^\pm_{j 1}]\right)
  + \frac16 \left( [x^\pm_{j 0}, h_{i 0}] h_{i 0}
    + [x^\pm_{j 0}, h_{i 0}^2] \right) \Biggr]
\\
  = {} & \pm (\alpha_i,\alpha_j) \Biggl[
  x^\pm_{j 2} \pm \frac{(\alpha_i,\alpha_j)}4
  \left(h_{i 0} x^\pm_{j 0} + x^\pm_{j 0}h_{i 0}\right)
  \mp \frac{(\alpha_i,\alpha_j)}6 \left( 2 x^\pm_{j 0} h_{i 0}
    + h_{i 0} x^\pm_{j 0}\right) \Biggr] \\
    &  \qquad\text{by \eqref{eq:relexHX2'}}
\\
  = {} & \pm (\alpha_i,\alpha_j) \Biggl[
  x^\pm_{j 2} \pm \frac{(\alpha_i,\alpha_j)}{12}
  [h_{i 0}, x^\pm_{j 0}] \Biggr]
  = \pm (\alpha_i,\alpha_j) x^\pm_{j 2}
  \pm \frac{(\alpha_i,\alpha_j)^3}{12} x^\pm_{j 0}. \qedhere
\end{align*}
}
\end{proof}

Now we are ready to check several cases of \eqref{eq:relDS}.

\begin{Lemma}\label{lem:lowDS}
  The relation \eqref{eq:relDS} holds for the following cases:
  \begin{enumerate}
  \renewcommand{\labelenumi}{\textup{(\theenumi)}}%
  \item $r_1 = \cdots = r_b = 0$, $s\in\Z_{\ge 0}$,
  \item $r_1 = 1$, $r_2 = \cdots = r_b = 0$, $s\in\Z_{\ge 0}$,
  \item $r_1 = 2$, $r_2 = \cdots = r_b = 0$, $s=0$,
  \item \textup($b\ge 2$ and\textup) $r_1 = r_2 = 1$, $r_3 =
    \cdots = r_b = 0$, $s=0$. \label{Serre4}
  \end{enumerate}
\end{Lemma}

\begin{proof}
Let $\vec{r} = (r_1,\dots, r_{b})$ and denote the left hand side of
\eqref{eq:relDS} by $X^\pm(\vec{r},s)$. We first show $X^\pm(\vec{0},
s) = 0$ by induction on $s\geq 0$. If $s=0$, this is just \eqref{eq:relDS'}.
Suppose that $X^\pm(\vec{0},s) = 0$ for some $s\geq 0$. We apply $[\tilde h_{i 1},\cdot]$ and
$[\tilde h_{j 1},\cdot]$ to $X^\pm(\vec{0},s)$ to get
\begin{equation*}
  \begin{split}
  & \frac{(\alpha_i,\alpha_i)}{(b-1)!}X^\pm((1,0,\dots,0), s) +
  \frac{(\alpha_i,\alpha_j)}{b!}  X^\pm(\vec{0}, s+1) = 0,
\\
  & \frac{(\alpha_i,\alpha_j)}{(b-1)!} X^\pm((1,0,\dots,0), s) +
  \frac{(\alpha_j,\alpha_j)}{b!} X^\pm(\vec{0}, s+1) = 0.
  \end{split}
\end{equation*}
Since the determinant of
\(
\begin{pmatrix}
  (\alpha_i,\alpha_i) & (\alpha_i,\alpha_j) \\
  (\alpha_j,\alpha_i) & (\alpha_j,\alpha_j)
\end{pmatrix}
\) is nonzero by hypothesis, we obtain that 
\begin{equation*}
X^\pm((1,0,\dots,0), s) = 0 = X^\pm(\vec{0},s+1).
\end{equation*}
Therefore, by induction we have $X^\pm(\vec{0},s)=0$ for all $s\geq 0$.
We simultaneously have proven that $X^\pm((1,0,\dots,0), s) = 0$ for all $s\geq 0$.

Next, we apply $[\tilde h_{i 2},\cdot]$ to $X^\pm(\vec{0},0) = 0$. By
\eqref{eq:hx2} we have
\begin{multline*}
  0 = b (\alpha_i,\alpha_i) X^\pm((2,0,\dots,0),0)
  + \frac{b (\alpha_i,\alpha_i)^3}{12} X^\pm(\vec{0},0)
\\
  + (\alpha_i,\alpha_j) X^\pm(\vec{0},2)
  + \frac{(\alpha_i,\alpha_j)^3}{12} X^\pm(\vec{0},0).
\end{multline*}
Since the last three terms vanish, we have
\(
  X^\pm((2,0,\dots,0),0) = 0.
\)

In order to prove \eqref{Serre4}, we apply $[\tilde h_{i 1}, \cdot]$ to $X^\pm((1,0,\dots,0),0) = 0$.
We have
\begin{multline*}
  0 = 
   \frac{(\alpha_i,\alpha_i)}{(b-1)!} X^\pm((2,0,\dots, 0),0)
\\
  + \frac{(\alpha_i,\alpha_i)}{(b-2)!} X^\pm((1,1,0,\dots, 0),0)
  +  \frac{(\alpha_i,\alpha_j)}{(b-1)!} X^\pm((1,0,\dots,0),1).
\end{multline*}
Since the first and third terms vanish, we have
$X^\pm((1,1,0,\dots, 0),0) = 0$.
\end{proof}

We move on to proving the relation \eqref{eq:relHH} with $i=j$,
$(r,s) = (1,2)$ from \lemref{lem:lowDS}(4): see \propref{prop:HH} and \propref{prop:HH2}. A few intermediary lemmas will be necessary. The argument was
originally noticed in \cite[one paragraph after the proof of Prop.~2.1]{Gu2} for type $\hat\algsl_{n+1}$. Since the proof was omitted there, we reproduce it here.
\begin{Lemma}\label{lem:hxhx} We have
\begin{equation*}
  [h_{j 1}, x^{\pm}_{i 1}] = 
  \frac{(\alpha_i,\alpha_j)}{(\alpha_i,\alpha_i)} 
  [h_{i 1}, x^{\pm}_{i 1}] \pm \frac{(\alpha_i,\alpha_j)}2
  \left(\{h_{j 0}, x^{\pm}_{i 1}\} - \{ h_{i 0}, x^{\pm}_{i 1}\} \right)  \; \text{ for all }\; i,j\in I.
\end{equation*}
\end{Lemma}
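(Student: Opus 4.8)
The plan is to derive the identity by computing $[h_{j1}, x_{i1}^{\pm}]$ two ways, rewriting everything in terms of data we already control from Lemmas \ref{lem:HX}--\ref{lem:HXij} and Proposition \ref{prop:hx}. The cleanest route exploits the recursion $x_{i1}^{\pm} = \pm(\alpha_i,\alpha_i)^{-1}[\tilde h_{i1}, x_{i0}^{\pm}]$ together with the fact that $h_{j1}$ differs from $\tilde h_{j1}$ by $\tfrac12 h_{j0}^2$, so that commutators with $h_{j1}$ can be broken into a ``good part'' (a commutator with $\tilde h_{j1}$, handled by \lemref{lem:HX}) and a correction term involving $h_{j0}$ (handled by \eqref{eq:relHX'}). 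So first I would write
\[
[h_{j1}, x_{i1}^{\pm}] = [\tilde h_{j1}, x_{i1}^{\pm}] + \tfrac12[h_{j0}^2, x_{i1}^{\pm}].
\]
By \lemref{lem:HX} the first term on the right is $\pm(\alpha_i,\alpha_j)\, x_{i2}^{\pm}$, and $[h_{j0}^2, x_{i1}^{\pm}] = \{h_{j0}, [h_{j0}, x_{i1}^{\pm}]\} = \pm(\alpha_i,\alpha_j)\{h_{j0}, x_{i1}^{\pm}\}$ by \eqref{eq:relHX'}. Hence
\[
[h_{j1}, x_{i1}^{\pm}] = \pm(\alpha_i,\alpha_j)\, x_{i2}^{\pm} \pm \tfrac{(\alpha_i,\alpha_j)}{2}\{h_{j0}, x_{i1}^{\pm}\}.
\]

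**Next I would specialize this to $j=i$** to get an expression for $x_{i2}^{\pm}$:
\[
[h_{i1}, x_{i1}^{\pm}] = \pm(\alpha_i,\alpha_i)\, x_{i2}^{\pm} \pm \tfrac{(\alpha_i,\alpha_i)}{2}\{h_{i0}, x_{i1}^{\pm}\},
\]
so that
\[
\pm(\alpha_i,\alpha_i)\, x_{i2}^{\pm} = [h_{i1}, x_{i1}^{\pm}] \mp \tfrac{(\alpha_i,\alpha_i)}{2}\{h_{i0}, x_{i1}^{\pm}\}.
\]
Then I substitute this back into the general-$j$ identity: multiply the general identity by $(\alpha_i,\alpha_i)$ and express $\pm(\alpha_i,\alpha_i)\,x_{i2}^{\pm}$ using the $j=i$ relation. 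This gives
\[
(\alpha_i,\alpha_i)[h_{j1}, x_{i1}^{\pm}] = (\alpha_i,\alpha_j)\Bigl([h_{i1}, x_{i1}^{\pm}] \mp \tfrac{(\alpha_i,\alpha_i)}{2}\{h_{i0}, x_{i1}^{\pm}\}\Bigr) \pm \tfrac{(\alpha_i,\alpha_j)(\alpha_i,\alpha_i)}{2}\{h_{j0}, x_{i1}^{\pm}\},
\]
and dividing by $(\alpha_i,\alpha_i)$ yields exactly
\[
[h_{j1}, x_{i1}^{\pm}] = \frac{(\alpha_i,\alpha_j)}{(\alpha_i,\alpha_i)}[h_{i1}, x_{i1}^{\pm}] \pm \frac{(\alpha_i,\alpha_j)}{2}\bigl(\{h_{j0}, x_{i1}^{\pm}\} - \{h_{i0}, x_{i1}^{\pm}\}\bigr),
\]
which is the claimed formula.

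**The only genuinely delicate point is justifying the identity $[h_{j1}, x_{i1}^{\pm}] = \pm(\alpha_i,\alpha_j)x_{i2}^{\pm} \pm \tfrac{(\alpha_i,\alpha_j)}{2}\{h_{j0},x_{i1}^{\pm}\}$**, i.e. making sure that \lemref{lem:HX} legitimately applies with the index pair $(j,i)$ and with $r=1$; this is fine because \lemref{lem:HX} was proved for all $i,j \in I$ and all $r \geq 0$, and \eqref{eq:relHX'} likewise holds for all pairs. One should also double-check the sign bookkeeping in $[h_{j0}^2, x_{i1}^{\pm}]$: here $[h_{j0}, x_{i1}^{\pm}] = \pm(\alpha_i,\alpha_j)x_{i1}^{\pm}$ commutes with $h_{j0}$ up to the same scalar, so $[h_{j0}^2, x_{i1}^{\pm}] = h_{j0}[h_{j0},x_{i1}^{\pm}] + [h_{j0},x_{i1}^{\pm}]h_{j0} = \pm(\alpha_i,\alpha_j)\{h_{j0},x_{i1}^{\pm}\}$, which is what was used. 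Since every ingredient is one of the already-established Lemmas \ref{lem:HX}, \ref{lem:HXii} (or directly \lemref{lem:HX} with $j=i$), and relation \eqref{eq:relHX'}, no step requires \eqref{eq:relHH2} or anything beyond \eqref{eq:relHH'}--\eqref{eq:relDS'}, so the argument stays within the hypotheses of \thmref{thm:deduction}. The whole proof is a short linear-algebra manipulation in two unknowns $[h_{j1},x_{i1}^{\pm}]$ and $x_{i2}^{\pm}$, with no real obstacle beyond careful sign tracking.
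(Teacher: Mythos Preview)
Your proof is correct and slightly slicker than the paper's. The paper instead expands $x_{i1}^{\pm}$ as $\pm(\alpha_i,\alpha_i)^{-1}[h_{i1},x_{i0}^{\pm}] - \tfrac12\{h_{i0},x_{i0}^{\pm}\}$, then uses $[h_{i1},h_{j1}]=0$ to swap the nested bracket to $[h_{i1},[h_{j1},x_{i0}^{\pm}]]$, expands again, and closes with the anticommutator identity $\{h_{j0},\{h_{i0},x_{i0}^{\pm}\}\}=\{h_{i0},\{x_{i0}^{\pm},h_{j0}\}\}$. You instead decompose $h_{j1}=\tilde h_{j1}+\tfrac12 h_{j0}^2$ and invoke \lemref{lem:HX} at level $r=1$ to produce $x_{i2}^{\pm}$ directly, then eliminate $x_{i2}^{\pm}$ by specializing to $j=i$. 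Both routes rest on the same underlying input (the relation $[\tilde h_{i1},\tilde h_{j1}]=0$, packaged either as $[h_{i1},h_{j1}]=0$ or via \lemref{lem:HX}), but your elimination-of-$x_{i2}^{\pm}$ trick avoids the extra anticommutator manipulation and keeps the bookkeeping shorter.
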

\begin{proof}
  The left-hand side is equal to
  \begin{align*}
    \pm \frac1{(\alpha_i,\alpha_i)}
  [h_{j 1}, [h_{i 1}, & x^{\pm}_{i 0} ] ] - \frac12 [h_{j 1}, \{ h_{i 0}, x^{\pm}_{i 0}\} ]
\\
  = {} & \pm \frac1{(\alpha_i,\alpha_i)} [h_{i 1}, [h_{j 1}, x^{\pm}_{i 0}]]
  - \frac{(\alpha_i,\alpha_j)}2 \left( \pm \{h_{i 0}, x^{\pm}_{i 1}\}
    \pm \frac{1}{2} \{ h_{i 0}, \{ x^{\pm}_{i 0}, h_{j 0}\}\} \right)
\\
  = {} & \frac{(\alpha_i,\alpha_j)}{(\alpha_i,\alpha_i)} 
  [h_{i 1}, x^{\pm}_{i 1}  + \frac12 \{ h_{j 0}, x^{\pm}_{i 0}\} ]
  \mp \frac{(\alpha_i,\alpha_j)}2 \left(\{ h_{i 0}, x^{\pm}_{i 1}\} 
    + \frac{1}{2} \{h_{i 0}, \{x^\pm_{i 0}, h_{j 0}\}\} \right).
  \end{align*}
Using
\begin{equation*}
 [h_{i1},\{h_{j0},x_{i0}^\pm\}]=\pm (\alpha_i,\alpha_i)\left\{h_{j0},x_{i1}^\pm+\frac{1}{2}\{h_{i0},x_{i0}^\pm\}\right\} \text{ and } \{h_{j 0}, \{ h_{i 0}, x^\pm_{i 0}\}\} = \{ h_{i 0}, \{ x^\pm_{i 0}, h_{j 0}\}\},
\end{equation*}
we find that this is equal to the right-hand side.
\end{proof}

\begin{Lemma}\label{lem:h2h0}
 For all $i,j\in I$, we have 
  \begin{equation*}
    [h_{i 2}, h_{j 0}] = 0.
  \end{equation*}
\end{Lemma}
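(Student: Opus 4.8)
The plan is to read off the result from the defining formula for $h_{i 2}$ together with the fact that $\ad(h_{j0})$ acts semisimply on the generators. In the minimalistic presentation we have, by the second formula in \eqref{eq:rec} with $r=2$, that $h_{i 2}=[x^+_{i 2},x^-_{i 0}]$; this is the expression I would work with (rather than $[x^+_{i1},x^-_{i1}]$ from \lemref{lem:XXii}), since the $\ad(h_{j0})$-eigenvalue of the Chevalley generator $x^-_{i0}$ is available at once. Concretely, \lemref{lem:HX} (first equality, with $r=2$) gives $[h_{j 0},x^+_{i 2}]=(\alpha_j,\alpha_i)\,x^+_{i 2}$, while \eqref{eq:relHX'} with $s=0$ gives $[h_{j 0},x^-_{i 0}]=-(\alpha_j,\alpha_i)\,x^-_{i 0}$; here I use that the invariant form is symmetric, so $(\alpha_j,\alpha_i)=(\alpha_i,\alpha_j)$.

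Then I would simply apply the Jacobi identity, i.e. use that $\ad(h_{j 0})$ is a derivation, to expand $[h_{j 0},h_{i 2}]=[h_{j 0},[x^+_{i 2},x^-_{i 0}]]$ as
\[
[\,[h_{j 0},x^+_{i 2}],\,x^-_{i 0}\,]+[\,x^+_{i 2},\,[h_{j 0},x^-_{i 0}]\,]=(\alpha_i,\alpha_j)[x^+_{i 2},x^-_{i 0}]-(\alpha_i,\alpha_j)[x^+_{i 2},x^-_{i 0}]=0 .
\]
Hence $[h_{i 2},h_{j 0}]=0$. Note that the argument is completely uniform in $i$ and $j$, so the case $i=j$ requires no separate treatment.

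There is essentially no obstacle here: the content is just that $h_{i 2}$ lies in the zero weight space for the grading defined by $\ad(h_{j0})$, which is automatic once $h_{i 2}$ is exhibited as a bracket of an $\ad(h_{j0})$-eigenvector with an eigenvector of opposite eigenvalue. The only point of care is to choose the right presentation of $h_{i 2}$, namely the defining one $[x^+_{i 2},x^-_{i 0}]$, so that both eigenvalues are immediately justified by the relations already available (\lemref{lem:HX} and \eqref{eq:relHX'}); alternatively one could observe that $h_{i2}-\tilde h_{i2}=h_{i0}h_{i1}-\tfrac13 h_{i0}^3$ commutes with $h_{j0}$ by \eqref{eq:relHH'}, reducing the claim to $[\tilde h_{i2},h_{j0}]=0$, but that route would need an extra step to express $h_{j0}$ as a bracket of $x^{\pm}$'s and is less economical.
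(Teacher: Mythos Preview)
Your proof is correct and essentially identical to the paper's: both write $h_{i2}=[x^+_{i2},x^-_{i0}]$ via \eqref{eq:rec}, apply the Jacobi identity, and invoke the first relation of \lemref{lem:HX} to see the two resulting terms cancel. The only cosmetic difference is that the paper expands $[h_{i2},h_{j0}]$ directly rather than $[h_{j0},h_{i2}]$, and cites \lemref{lem:HX} for both eigenvalues rather than \eqref{eq:relHX'} for the $r=0$ case.
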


\begin{proof}
We have
  \begin{equation*}
    [h_{i 2}, h_{j 0}] = [[x^+_{i 2}, x^-_{i 0}], h_{j 0}]
    = [[x^+_{i 2}, h_{j 0}], x^-_{i 0}] + [x^+_{i 2}, [x^-_{i 0}, h_{j 0}]].
  \end{equation*}
Employing the first relation in \lemref{lem:HX}, we see that this expression is equal to zero.
\end{proof}

\begin{Proposition}\label{prop:HH}
Let $i,j\in I$ be such that $a_{ij} = -1$. Then
\begin{equation*}
  [h_{i 1}, h_{i 2}] = [h_{i 1}, [x^+_{i 1}, x^-_{i 1}]] = 0.
\end{equation*}
\end{Proposition}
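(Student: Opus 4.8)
The plan is to reduce \propref{prop:HH} to an identity involving only the generators $x^\pm_{i\bullet}$, and then to prove that identity by bringing in the index $j$ with $a_{ij}=-1$ together with the low-degree Serre relations supplied by \lemref{lem:lowDS}.

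The reduction goes as follows. By \lemref{lem:XXii} we have $h_{i2}=[x^+_{i1},x^-_{i1}]$, so the two displayed expressions coincide and it remains to prove $[h_{i1},h_{i2}]=0$. Since $[h_{i0},h_{i1}]=0$ by \eqref{eq:relHH'} and $[h_{i0},h_{i2}]=0$ by \lemref{lem:h2h0}, this is equivalent to $[\tilde h_{i1},h_{i2}]=0$, with $\tilde h_{i1}=h_{i1}-\frac12 h_{i0}^2$. Using $[\tilde h_{i1},x^\pm_{i1}]=\pm(\alpha_i,\alpha_i)x^\pm_{i2}$ from \lemref{lem:HX} together with the Jacobi identity gives
\[
 [\tilde h_{i1},h_{i2}]=(\alpha_i,\alpha_i)\bigl([x^+_{i2},x^-_{i1}]-[x^+_{i1},x^-_{i2}]\bigr),
\]
so, since $(\alpha_i,\alpha_i)\neq 0$, it suffices to prove $[x^+_{i2},x^-_{i1}]=[x^+_{i1},x^-_{i2}]$.

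To prove this equality I would introduce the root vectors $v^\pm_r:=[x^\pm_{ir},x^\pm_{j0}]$, of weight $\pm(\alpha_i+\alpha_j)$. Since $a_{ij}=-1$ we have $(\alpha_i,\alpha_j)=-\frac12(\alpha_i,\alpha_i)\neq 0$, and combining $[x^\mp_{j0},x^\pm_{ir}]=0$ (\lemref{lem:XX}) with $[h_{j0},x^\pm_{ir}]=\pm(\alpha_i,\alpha_j)x^\pm_{ir}$ (\lemref{lem:HX}) and the Jacobi identity yields the reconstruction formula $[x^\mp_{j0},v^\pm_r]=(\alpha_i,\alpha_j)\,x^\pm_{ir}$ for all $r\ge 0$. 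Moreover $b=1-a_{ij}=2$, so \lemref{lem:lowDS} furnishes the \emph{quadratic} Serre identities $[x^\pm_{i0},v^\pm_0]=0$, $[x^\pm_{i1},v^\pm_0]+[x^\pm_{i0},v^\pm_1]=0$, $[x^\pm_{i1},v^\pm_1]=0$ and $[x^\pm_{i2},v^\pm_0]+[x^\pm_{i0},v^\pm_2]=0$; applying $[\tilde h_{i1},\cdot]$ to $[x^\pm_{i1},v^\pm_1]=0$ and using $[x^\pm_{i1},[x^\pm_{i1},x^\pm_{j1}]]=0$ (\lemref{lem:lowDS}(4) with $s=1$) produces in addition $[x^\pm_{i2},v^\pm_1]+[x^\pm_{i1},v^\pm_2]=0$.

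Finally I would substitute $x^\pm_{i1}=(\alpha_i,\alpha_j)^{-1}[x^\mp_{j0},v^\pm_1]$ and $x^\pm_{i2}=(\alpha_i,\alpha_j)^{-1}[x^\mp_{j0},v^\pm_2]$ into $[x^+_{i2},x^-_{i1}]-[x^+_{i1},x^-_{i2}]$ and expand it repeatedly by the Jacobi identity, so that every term is rewritten as an iterated bracket of the $v^\pm_r$ ($r\le 2$), of $x^\pm_{j0}$ and $h_{j0}$, and of the weight-zero elements $[x^+_{ir},x^-_{is}]$ (each of which commutes with $h_{j0}$). The contributions that are not visibly the same in the two halves of the difference are then matched against one another using the Serre identities above together with the loop relations for the pair $(i,j)$ already available (\lemref{lem:HX}, \lemref{lem:HXij}, \lemref{lem:XX}, the relevant cases of \eqref{eq:relexXX'} and their low-degree consequences). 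This forces $[x^+_{i2},x^-_{i1}]=[x^+_{i1},x^-_{i2}]$, hence $[h_{i1},h_{i2}]=0$. This last step is where essentially all the work lies, and it is the main obstacle: one has to carry out the Jacobi expansions while keeping every auxiliary element of degree $\le 2$, so that only the low-degree instances of the loop and Serre relations actually established above get invoked, and one has to check that the asymmetric pieces cancel in pairs. It is precisely here that the hypothesis $a_{ij}=-1$ is indispensable, as it makes the Serre relations quadratic and hence applicable termwise inside the long expression.
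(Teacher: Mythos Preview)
Your reduction is correct: by \lemref{lem:XXii}, \lemref{lem:h2h0} and \lemref{lem:HX} the statement is equivalent to $[x^+_{i2},x^-_{i1}]=[x^+_{i1},x^-_{i2}]$, and the reconstruction formula $[x^\mp_{j0},v^\pm_r]=(\alpha_i,\alpha_j)x^\pm_{ir}$ together with your list of low-degree Serre identities are all valid. But the proposal stops exactly where the content begins: the final step is described only as ``substitute, expand by Jacobi, and match terms'', and you explicitly flag it as the main obstacle without carrying it out. Worse, it is not clear that the scheme terminates rather than looping back on itself. Substituting once into $D:=[x^+_{i2},x^-_{i1}]-[x^+_{i1},x^-_{i2}]$ and expanding by Jacobi gives
\[
 (\alpha_i,\alpha_j)\,D=[v^+_2,v^-_1]-[v^+_1,v^-_2]+[x^+_{j0},[D,x^-_{j0}]],
\]
so $D$ reappears on the right, and one must still analyse $[D,x^\pm_{j0}]$ and the \emph{mixed} brackets $[v^+_r,v^-_s]$. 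The Serre relations you have collected only control the same-sign brackets $[x^\pm_{i\bullet},v^\pm_\bullet]$, so it is not evident which identities force the mixed terms to cancel. This may eventually close up, but nothing in the proposal shows that it does.

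The paper's argument is shorter and sidesteps this recursion entirely. It starts from the single Serre relation $[x^+_{i1},[x^+_{i1},x^+_{j0}]]=0$ (\lemref{lem:lowDS}(4)), brackets once with $x^-_{j1}$ to obtain $[x^+_{i1},[x^+_{i1},h_{j1}]]=0$, and then brackets twice with $x^-_{i0}$. The essential ingredient you do not invoke is \lemref{lem:hxhx}, which converts $[h_{j1},x^\pm_{i1}]$ into a multiple of $[h_{i1},x^\pm_{i1}]$ plus anticommutator terms that are already under control; after this substitution the whole expression collapses to $-3[h_{i1},[x^+_{i1},x^-_{i1}]]=0$. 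The index $j$ enters only through that one Serre relation and through \lemref{lem:hxhx}, and no mixed brackets $[v^+_r,v^-_s]$ ever appear.
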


\begin{proof}
  For brevity, we suppose $(\alpha_i,\alpha_i) = 2$ and
  $(\alpha_i,\alpha_j) = -1$. 

  The first equality follows from \lemref{lem:XXii}, so we prove the
  second equality.

We start with
\begin{equation*}
  0 = [x^+_{i 1}, [x^+_{i 1}, x^+_{j 0}]],
\end{equation*}
which holds by \lemref{lem:lowDS}(4).
We apply $[\cdot,x^-_{j 1}]$ and use Lemmas \ref{lem:XX} and \ref{lem:XXii} to get
\(
   0 = [x^+_{i 1}, [x^+_{i 1}, h_{j 1}]].
\)
We then apply $[\cdot,x^-_{i 0}]$:
\begin{equation*}
  \begin{split}
  0 &= [h_{i 1}, [x^+_{i 1}, h_{j 1}]] + [x^+_{i 1}, [h_{i 1}, h_{j 1}]]
  + [x^+_{i 1}, [x^+_{i 1}, x^-_{i 1}]]
  + \frac12 [x^+_{i 1}, [x^+_{i 1}, \{h_{j 0}, x^-_{i 0}\}]]
\\
   &= [h_{i 1}, [x^+_{i 1}, h_{j 1}]] 
  + [x^+_{i 1}, [x^+_{i 1}, x^-_{i 1}]]
  + \frac12 [x^+_{i 1}, \{x^+_{i 1}, x^-_{i 0}\} + \{h_{j 0}, h_{i 1}\}].
  \end{split}
\end{equation*}
Here we have used that $[h_{i 1}, h_{j 1}]=0$. We apply $[\cdot,x^-_{i 0}]$ again to obtain
\begin{equation}\label{eq:ss}
  \begin{split}
    0 &= 
    \begin{aligned}[t]
    & -2[ x^-_{i 1}, [x^+_{i 1}, h_{j 1}]]
    - [\{h_{i 0}, x^-_{i 0}\}, [x^+_{i 1}, h_{j 1}]]
    + [h_{i 1}, [x^+_{i 1}, x^-_{i 1}]]
\\
    & 
    + \frac12 [h_{i 1}, [x^+_{i 1}, \{x^-_{i 0},h_{j 0}\}]]
    + [h_{i 1}, [x^+_{i 1}, x^-_{i 1}]] + [x^+_{i 1}, [h_{i 1}, x^-_{i 1}]]
\\
    & 
    -[x^+_{i 1}, [x^+_{i 1}, (x^-_{i 0})^2]
    + \frac12 [[x^+_{i 1}, \{x^+_{i 1}, x^-_{i 0}\} + \{h_{j 0}, h_{i 1}\}],
    x^-_{i 0}]
    \end{aligned}
\\
  &=
  \begin{aligned}[t]
  & -2[ x^-_{i 1}, [x^+_{i 1}, h_{j 1}]]
  + 2[h_{i 1}, [x^+_{i 1}, x^-_{i 1}]]
  + [x^+_{i 1}, [h_{i 1}, x^-_{i 1}]]
\\
  &
  - [\{h_{i 0}, x^-_{i 0}\}, [x^+_{i 1}, h_{j 1}]]
  + [h_{i 1}, \{x^+_{i 1},x^-_{i 0}\}]
  + \frac12 [x^+_{i 1}, \{h_{j 0}, [h_{i 1}, x^-_{i 0}]\}].
  \end{aligned}
  \end{split}
\end{equation}
From \lemref{lem:hxhx} we have
\begin{align*}
  -2[ x^-_{i 1}, [x^+_{i 1}, h_{j 1}]]
 = {} & [x^-_{i 1}, [x^+_{i 1}, h_{i 1}]]
  - [x^-_{i 1}, \{h_{j 0}, x^+_{i 1}\} - \{h_{i 0}, x^+_{i 1}\}]
\\
 = {} & [x^-_{i 1}, [x^+_{i 1}, h_{i 1}]]
  + 3 \{ x^-_{i 1}, x^+_{i 1}\} - \{ h_{j 0}, [x^-_{i 1}, x^+_{i 1}]\}
  + \{ h_{i 0}, [x^-_{i 1}, x^+_{i 1}]\},
\end{align*}
and also 
\begin{align*}
  -[\{h_{i 0}, x^-_{i 0}\}, [x^+_{i 1}, h_{j 1}]] = {} & -\frac12 \left[\{ h_{i 0}, x^-_{i 0}\}, 
  [h_{i 1}, x^+_{i 1}] + \{h_{j 0}, x^+_{i 1}\} - \{ h_{i 0}, x^+_{i 1}\}
  \right]
\\
  = {} &  - \{[h_{i 1}, x^+_{i 1}], x^-_{i 0}\}
  - \frac12 \{h_{i 0}, [x^-_{i 0}, [h_{i 1}, x^+_{i 1}]]\}
  - \{\{h_{j 0}, x^+_{i 1}\}, x^-_{i 0}\}
\\
  & + \frac12 \{h_{i 0}, \{ h_{j 0}, h_{i 1}\} + \{x^-_{i 0}, x^+_{i 1}\}\}
  + \{\{h_{i 0}, x^+_{i 1}\}, x^-_{i 0}\}
\\
  & - 2 h_{i 1} h_{i 0}^2 + \{h_{i 0}, \{x^-_{i 0}, x^+_{i 1}\}\}.
\end{align*}
We substitute these into \eqref{eq:ss} to get
\begin{align}\label{eq:sss}
 - 3 [h_{i 1}, [x^+_{i 1}, x^-_{i 1}]] = {} & 
 3 \{ x^-_{i 1}, x^+_{i 1}\} - \{h_{j 0}, [x^-_{i 1}, x^+_{i 1}]\}
  + \{ h_{i 0}, [x^-_{i 1}, x^+_{i 1}]\} \nonumber 
\\
  & - \{[h_{i 1}, x^+_{i 1}], x^-_{i 0}\}
  - \frac12 \{h_{i 0}, [x^-_{i 0}, [h_{i 1}, x^+_{i 1}]]\}
  - \{\{h_{j 0}, x^+_{i 1}\}, x^-_{i 0}\} \nonumber 
\\
  & + \frac12 \{ h_{i 0}, \{h_{j 0}, h_{i 1}\} + \{x^-_{i 0}, x^+_{i 1}\}\}
  + \{\{h_{i 0}, x^+_{i 1}\}, x^-_{i 0}\}
\\
  & - 2 h_{i 1} h_{i 0}^2 + \{ h_{i 0}, \{x^-_{i 0}, x^+_{i 1}\}\}
  + \{[h_{i 1}, x^+_{i 1}], x^-_{i 0}\} \nonumber 
\\
  & + \{x^+_{i 1}, [h_{i 1}, x^-_{i 0}]\} + \frac12
  \{ h_{j 0}, [x^+_{i 1}, [h_{i 1}, x^-_{i 0}]]\}
  + \frac 12 \{x^+_{i 1}, [h_{i 1}, x^-_{i 0}]\}. \nonumber 
\end{align}
We then substitute
\begin{align*}
  -\frac12 \{h_{i 0}, [x^-_{i 0}, [h_{i 1}, x^+_{i 1}]]\}
  = {} & - \{h_{i 0}, [x^-_{i 1}, x^+_{i 1}]\}
  - \frac12 \{h_{i 0}, [\{x^-_{i 0}, h_{i 0}\}, x^+_{i 1}]\}
\\
  = {} &  - \{h_{i 0}, [x^-_{i 1}, x^+_{i 1}]\}
  + 2 h_{i 1} h_{i 0}^2   - \{h_{i 0}, \{x^-_{i 0}, x^+_{i 1}\}\},
\\
  \frac32 \{x^+_{i 1}, [h_{i 1}, x^-_{i 0}]\}
  = {} & -3 \{x^+_{i 1}, x^-_{i 1}\} - \frac32 \{x^+_{i 1}, \{h_{i 0}, x^-_{i 0}\}\},
\\
  \frac12 \{h_{j 0}, [x^+_{i 1}, [h_{i 1}, x^-_{i 0}]]\}
  = {} & - \{h_{j 0}, [x^+_{i 1}, x^-_{i 1}]\}
  - \frac12 \{h_{j 0}, [x^+_{i 1}, \{h_{i 0}, x^-_{i 0}\}]\}
\\
  = {} & - \{h_{j 0}, [x^+_{i 1}, x^-_{i 1}]\}
  + \{h_{j 0}, \{x^+_{i 1}, x^-_{i, 0}\}\}
  -\frac12 \{h_{j 0}, \{h_{i 0}, h_{i 1}\}\}
\end{align*}
into \eqref{eq:sss} to get
\begin{align*}
  - 3[h_{i 1}, [x^+_{i 1}, x^-_{i 1}]]
= {} & - \{\{h_{j 0}, x^+_{i 1}\}, x^-_{i 0}\}
  + \frac12 \{h_{i 0}, \{x^-_{i 0}, x^+_{i 1}\}\}
  + \{\{h_{i 0}, x^+_{i 1}\}, x^-_{i 0}\}
\\
  & - \frac32 \{x^+_{i 1}, \{h_{i 0}, x^-_{i 0}\}\}
  + \{h_{j 0}, \{x^+_{i 1}, x^-_{i 0}\}\}
\\
= {} &
  [x^+_{i 1}, [x^-_{i 0}, h_{j 0}]]
  + \frac 32 [h_{i 0}, [x^+_{i 1}, x^-_{i 0}] ]
    + \frac12 [x^+_{i 1}, [x^-_{i 0}, h_{i 0}]] = 0.
\end{align*}
This is nothing but the assertion.
\end{proof}

\begin{Proposition}\label{prop:HH2}
  Assume that
$[h_{i 1}, h_{i 2}] = 0$ and $(\alpha_i,\alpha_j)\neq 0$. Then we have
\begin{equation*}
  [h_{j 1}, h_{j 2}] = 0.
\end{equation*}
\end{Proposition}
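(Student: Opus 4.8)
The plan is to propagate the vanishing of $[h_{i1},h_{i2}]$ from one vertex $i$ to an adjacent vertex $j$ with $(\alpha_i,\alpha_j)\neq 0$, exploiting that $\tilde h_{j1}$ can be recovered from $\tilde h_{i1}$ by commuting with root vectors attached to the edge $ij$. Concretely, I would first record, as in \lemref{lem:hxhx}, the identity expressing $[h_{j1},x^\pm_{i1}]$ (and hence $[\tilde h_{j1},x^\pm_{i1}]$) in terms of $[h_{i1},x^\pm_{i1}]$ and anticommutators involving $h_{i0},h_{j0}$. Applying $[x^+_{i1},\cdot]$ and $[x^-_{i1},\cdot]$ appropriately, together with \lemref{lem:XXii} (which gives $[x^+_{i1},x^-_{i1}]=h_{i2}$) and \lemref{lem:h2h0}, I would obtain a formula for $[h_{j1},h_{i2}]$ modulo terms that can be evaluated using the already-established relations; the $(\alpha_i,\alpha_j)\neq 0$ hypothesis lets one solve for the relevant bracket just as in the $2\times 2$-determinant arguments of \lemref{lem:XX} and \lemref{lem:lowDS}.

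The key technical step is to express $[h_{j1},h_{j2}]$ in terms of $[h_{i1},h_{i2}]$ plus correction terms built from $h_{i0},h_{j0},x^\pm_{i1},x^\pm_{j0},\ldots$, all of which are controlled. I would do this in stages: (1) use \lemref{lem:hxhx} to trade $h_{j1}$ acting on $x^\pm_{i1}$ for $h_{i1}$ acting on $x^\pm_{i1}$; (2) write $h_{j2}$ via $[x^+_{j1},x^-_{j1}]$ or via $[\tilde h_{j1},[\tilde h_{j1},\cdot]]$-type expressions, using that $x^\pm_{j1}=\pm(\alpha_j,\alpha_j)^{-1}[\tilde h_{j1},x^\pm_{j0}]$ and that, by \lemref{lem:HX}, $[\tilde h_{i1},x^\pm_{j0}]=\pm(\alpha_i,\alpha_j)x^\pm_{j1}$; (3) combine the Jacobi identity with \lemref{lem:HXij}, \lemref{lem:XX}, \lemref{lem:XXii} and \lemref{lem:h2h0} to reduce everything to $[h_{i1},h_{i2}]=0$ and manifestly vanishing commutators of the form $[h_{i0},h_{j0}]$, $[h_{i0},h_{j1}]$, $[h_{i1},h_{j1}]$ (all zero by \eqref{eq:relHH'}). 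The anticommutator bookkeeping here will be substantial, mirroring the computations in the proof of \propref{prop:HH}.

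The main obstacle I anticipate is organizing the large number of anticommutator identities (of the type $\{h_{j0},\{h_{i0},x^\pm_{i0}\}\}=\{h_{i0},\{x^\pm_{i0},h_{j0}\}\}$ used in \lemref{lem:hxhx}) so that the correction terms actually cancel rather than merely simplify — in the proof of \propref{prop:HH} the final collapse to $0$ happens only after several substitutions, and the analogous collapse here must be engineered carefully. A secondary subtlety is that one must first know $[h_{j1},h_{i2}]=0$ (or an equivalent intermediary), which itself requires commuting the relation $[x^+_{i1},x^-_{i1}]=h_{i2}$ with $h_{j1}$ and invoking \lemref{lem:HX} plus $[h_{i1},h_{j1}]=0$; the $(\alpha_i,\alpha_j)\neq 0$ condition enters precisely to invert the resulting linear system, exactly as in \lemref{lem:lowDS}. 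Once $[h_{j1},h_{j2}]=0$ is obtained for $j$ adjacent to $i$, connectedness of the Dynkin diagram (indecomposability of the Cartan matrix) propagates it to all vertices, which is how \propref{prop:HH} and \propref{prop:HH2} together will yield \eqref{eq:relHH2} for every $i\in I$.
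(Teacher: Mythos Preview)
Your opening move --- deducing $[\tilde h_{j1},h_{i2}]=0$ from the hypothesis via \lemref{lem:hxhx} and \lemref{lem:XXii} --- is exactly how the paper begins. The divergence is in the passage from $[\tilde h_{j1},h_{i2}]=0$ to $[\tilde h_{j1},h_{j2}]=0$. You propose another long anticommutator computation ``mirroring the computations in the proof of \propref{prop:HH}'', using only the level-$0$/level-$1$ lemmas (\lemref{lem:HX}, \lemref{lem:HXij}, \lemref{lem:XX}, \lemref{lem:XXii}, \lemref{lem:h2h0}). The paper takes a much shorter route that you do not mention: it passes to the element $\tilde h_{i2}=h_{i2}-h_{i0}h_{i1}+\tfrac13 h_{i0}^3$ and invokes \propref{prop:hx}, namely $[\tilde h_{i2},x^\pm_{j0}]=\pm(\alpha_i,\alpha_j)x^\pm_{j2}\pm\tfrac1{12}(\alpha_i,\alpha_j)^3 x^\pm_{j0}$. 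Applying $[\cdot,x^+_{j0}]$ and then $[\cdot,x^-_{j0}]$ to $[\tilde h_{j1},\tilde h_{i2}]=0$ and using this formula immediately produces $x^\pm_{j2}$, and after one more use of \lemref{lem:XXii} one gets $2(\alpha_i,\alpha_j)[\tilde h_{j1},h_{j2}]=0$. No anticommutator bookkeeping is needed, and the hypothesis $(\alpha_i,\alpha_j)\neq 0$ enters simply as a nonzero scalar to divide by, not via a $2\times 2$ determinant.

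The gap in your plan is that the lemmas you cite only move indices by one ($[\tilde h_{k1},x^\pm_{\ell r}]=\pm(\alpha_k,\alpha_\ell)x^\pm_{\ell,r+1}$), so starting from $h_{i2}$ and commuting with $\tilde h_{j1}$, $x^\pm_{j0}$, $x^\pm_{j1}$ does not obviously produce $h_{j2}$ without at some point reintroducing $h_{i3}$ or $x^\pm_{j3}$ and losing control. The element $\tilde h_{i2}$ is precisely the device that jumps two levels at once at the \emph{other} vertex, and that is the idea your outline is missing.
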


Together with \propref{prop:HH} this gives \eqref{eq:relHH2} for any
$i$, $j$ because the Dynkin diagram of $\g$ is connected, this being a consequence of the assumption that the Cartan matrix of $\g$ is indecomposable. We are thus able to conclude the proof of \thmref{thm:deduction}.

\begin{proof}
  By the assumption and \lemref{lem:h2h0}, we have $[\tilde h_{i
    1},h_{i 2}] = 0$. Therefore
  \begin{alignat*}{2}
    0 &= [\tilde h_{i 1},h_{i 2}] = [\tilde h_{i 1}, [x^+_{i 1}, x^-_{i 1}]] 
    & \quad & \text{by \lemref{lem:XXii}}
\\
   &= [[\tilde h_{i 1}, x^+_{i 1}], x^-_{i 1}]
   + [x^+_{i 1}, [\tilde h_{i 1}, x^-_{i 1}]] &&
\\
   &= \frac{(\alpha_i,\alpha_i)}{(\alpha_i,\alpha_j)}
   \left( [[\tilde h_{j 1}, x^+_{i 1}], x^-_{i 1}]
     + [x^+_{i 1}, [\tilde h_{j 1}, x^-_{i 1}]]\right)
   &\quad& \text{by \lemref{lem:hxhx}}
\\
  &= \frac{(\alpha_i,\alpha_i)}{(\alpha_i,\alpha_j)} [\tilde h_{j 1}, h_{i 2}].   & \quad & \text{by \lemref{lem:XXii}}
  \end{alignat*}
  We take $\tilde h_{i 2}$ as in \eqref{eq:th2}. Then we have $[\tilde
  h_{j 1},\tilde h_{i 2}] = 0$ and we apply $[\cdot,x^+_{j 0}]$ to this to get:
  \begin{equation*}
    0 = (\alpha_j,\alpha_j) [x^+_{j 1}, \tilde h_{i 2}]
    + (\alpha_i,\alpha_j)[\tilde h_{j 1}, x^+_{j 2}]
    + \frac{(\alpha_i,\alpha_j)^3}{12} [\tilde h_{j 1}, x^+_{j 0}],
  \end{equation*}
where we have used \eqref{eq:hx2}.

We next apply $[\cdot, x^-_{j 0}]$ to this and, using again \eqref{eq:hx2}, we obtain:
\begin{align}
  0 = {}& (\alpha_j,\alpha_j)[h_{j 1}, \tilde h_{i 2}]
      - (\alpha_j,\alpha_j)(\alpha_i,\alpha_j) [x^+_{j 1}, x^-_{j 2}]
      - \frac{(\alpha_j,\alpha_j)(\alpha_i,\alpha_j)^3}{12}
      [x^+_{j 1}, x^-_{j 0}] \nonumber 
\\
     & - (\alpha_i,\alpha_j)(\alpha_j,\alpha_j)[x^-_{j 1}, x^+_{j 2}]
     + (\alpha_i,\alpha_j)[\tilde h_{j 1}, [x^+_{j 2}, x^-_{j 0}]]  \nonumber 
\\
     & - \frac{(\alpha_j,\alpha_j)(\alpha_i,\alpha_j)^3}{12}
     [x^-_{j 1}, x^+_{j 0}] \nonumber 
\\
    = {} & - (\alpha_i,\alpha_j)(\alpha_j,\alpha_j)
    \left([x^+_{j 1}, x^-_{j 2}] + [x^-_{j 1}, x^+_{j 2}]\right)
    + (\alpha_i,\alpha_j)[\tilde h_{j 1}, h_{j 2}]. \label{hj1hj2}
\end{align}
We simplify the last expression as follows. Start with $[x^+_{j 1},
x^-_{j 1}] = h_{j 2}$ from \lemref{lem:XXii} and apply $[\tilde h_{j 1}, \cdot]$ to it to obtain
\begin{equation*}
  (\alpha_j,\alpha_j)\left([x^+_{j 2}, x^-_{j 1}]
    - [x^+_{j 1}, x^-_{j 2}]\right) = [\tilde h_{j 1},h_{j 2}].
\end{equation*}
Therefore the right-hand side of \eqref{hj1hj2} is $2(\alpha_i,\alpha_j)[\tilde h_{j 1},
h_{j 2}]$, so $[\tilde h_{j 1},h_{j 2}]=0$. 
\end{proof}

\section{Operators on modules in the category $\mathscr O$}\label{Sec:O}

\subsection{Category $\mathscr O$}

The category $\mathscr O$ of modules over a finite-dimensional simple Lie algebra has been studied extensively over the past forty years \cite{Hu}. The definition of this category generalizes naturally for all Kac-Moody algebras (see for instance \cite[\S9.1]{Ka-book}).
It is also possible to extend the notion of category $\mathscr O$ to quantum toroidal algebras and affine Yangians: see \cite{He1,GTL2}.

\begin{Definition}
The category $\mathscr O$ of modules over the Yangian $Y(\g)$ consists of all the modules $V$ such that: \begin{enumerate}
\item $V$ is diagonalizable with respect to $\h$.
\item Each $\h$-weight space $V_{\mu}$ is finite-dimensional ($\mu\in\h^*$).
\item There exist $\lambda_1, \ldots, \lambda_k \in \h^*$ such that if $V_{\mu} \neq 0$, then $\lambda_i - \mu \in \sum_{j\in I} \Z_{\ge 0} \alpha_j$ for 
some $1\leq i\leq k$.
\end{enumerate}
\end{Definition}

One consequence of this definition which we will use implicitly is that if $V$ is a module in $\mathscr O$, $\alpha \in \Delta_+$ and $\mu\in\h^*$, then there exists $N\in\Z_{\ge 0}$ such that $V_{\mu+r\alpha}=0$ for all $r\ge N$. Moreover, $V$ is said to be integrable and in the category $\mathscr O$ if, in addition, such an $N$ can be chosen so that $V_{\mu\pm r\alpha}=0$ for all $r\ge N$.

\subsection{Another presentation of the Yangian and operators on category $\mathscr{O}$}\label{S:YopcatO}

When $\g$ is finite-dimensional, Drinfeld \cite{Drinfeld} gave another presentation of $Y(\g)$ as an algebra generated by elements $x$ and $J(x)$ for $x\in\g$ with the defining relations:
\begin{equation}
  \label{eq:another}
\begin{split}
xy-yx = [x,y] \text{ for all } x,y\in\g,  \text{ $J$ is linear in } & x\in\g, \quad
   J([x,y]) = [x, J(y)],
\\
  [J(x), J([y,z])] + [J(z), J([x,y])] + [J(y), J([z,x])]& = \sum_{a,b,c\in\mathtt{A}} ([x,\xi_a],[[y,\xi_b],[z,\xi_c]]) \{ \xi_a,\xi_b,\xi_c \},
\\
  [[J(x), J(y)], [z, J(w)]] + [[J(z), J(w)], [x, J(y)]]
 = & \sum_{a,b,c\in\mathtt{A}} \big(([x,\xi_a],[[y,\xi_b],[[z,w],\xi_c]]) \\
 & + ([z,\xi_a],[[w,\xi_b],[[x,y],\xi_c]]) \big)\{ \xi_a,\xi_b,J(\xi_c) \}
\end{split}
\end{equation}
where $\{ \xi_a \}_{a \in \mathtt{A}}$ is an orthonormal basis of $\g$, $\mathtt{A}$ being a fixed indexing set of size $\dim \g$, and $\{ \xi_a,\xi_b,\xi_c \} = \frac{1}{24} \sum_{\pi\in S_3} \xi_{\pi(a)} \xi_{\pi(b)} \xi_{\pi(c)}$, $S_3$ being the group of permutations of $\{ a,b,c\}$.

The isomorphism between this presentation and the one provided in \defref{def:Yangian} is given by
\begin{equation}\label{eq:J}
  \begin{split}
  x^\pm_i & \mapsto x^\pm_{i0}, \quad h_i \mapsto h_{i0}
\\
  J(h_i) & \mapsto h_{i 1} + v_i, \qquad
  v_i \defeq \frac14 \sum_{\alpha\in\Delta_+}
  (\alpha,\alpha_i) \{x^+_\alpha, x^-_\alpha\}
  - \frac12 h_i^2,
\\
  J(x^\pm_i) &\mapsto x^\pm_{i 1} + w^\pm_i, \quad
  w^\pm_i \defeq \pm \frac14
  \sum_{\alpha\in\Delta_+} \left\{[x_i^\pm,x^\pm_\alpha],x^\mp_\alpha\right\} 
  - \frac14\{x^\pm_i,h_i\},
  \end{split}
\end{equation}
where, for each $\alpha\in \Delta_+$, $x^\pm_\alpha\in \g_{\pm\alpha}$ are nonzero root vectors normalized so that $(x_\alpha^+,x_\alpha^-)=1$ and $x_{\alpha_i}^\pm=x_i^\pm$. 

The right-hand sides of \eqref{eq:another} and \eqref{eq:J} do not make sense unless $\g$ is finite-dimensional. However, we can change the definition of $v_i$ (and thus of $w_i^{\pm}$) so that it gives a well-defined operator on representations in the category $\mathscr O$ as follows.
First observe that 
\begin{equation*}
  \sum_{\alpha\in\Delta_+} (\alpha,\alpha_i) 
  [x^+_\alpha,x^-_\alpha]
  = \frac{1}{2}\omega(h_i)=\frac{1}{2}c_\g h_i, 
\end{equation*}
where $c_\g$ is the eigenvalue of the Casimir element $\omega\in U(\g)$ in the adjoint representation.
Therefore, we have
\begin{equation*}
 v_i = \frac{1}{8}c_\g h_i + \frac12 \sum_{\alpha\in\Delta_+} (\alpha,\alpha_i)
  x^-_\alpha x^+_\alpha - \frac12 h_i^2.
\end{equation*}

Since, for each $\zeta\in \C$, the assignment 
\begin{equation*}
 \tau_\zeta: J(x)\mapsto J(x)+\zeta x, \quad x\mapsto x \quad \forall \; x\in \g
\end{equation*}
determines an automorphism of the Yangian in the presentation \eqref{eq:another} \cite{Drinfeld}, replacing $v_i$ by $v_i+\zeta h_i$ and $w_i^\pm$ by $w_i^\pm +\zeta x_i^\pm$ in \eqref{eq:J} produces another isomorphism between the two presentations. Indeed, this amounts to composing \eqref{eq:J} with $\tau_\zeta$. In particular, taking $\zeta=-\frac{1}{8}c_\g$, we can annihilate the term $\frac{1}{8}c_\g h_i$ appearing in the above expression for $v_i$, which is essential since $c_\g$ does not admit an interpretation when $\g$ is not finite-dimensional. Now assume that $\g$ is an arbitrary symmetrizable Kac-Moody algebra and, for each $\alpha\in \Delta^+$, choose a basis $\{ x_{\alpha}^{(k)} \}$ of $\g_{\alpha}$
and a dual basis $\{ x_{-\alpha}^{(k)}\}$ of $\g_{-\alpha}$ so that $(x_\alpha^{(k)}, x_{-\alpha}^{(l)}) = \delta_{kl}$ and $x_{\alpha_i}^\pm=x_i^\pm$ for all $i\in I$.  Then the formula
\begin{equation}\label{eq:v_i}
v_i = \frac12 \sum_{\alpha\in\Delta_+} (\alpha,\alpha_i)
  \sum_{k=1}^{\dim\g_\alpha}
  x^{(k)}_{-\alpha} x^{(k)}_\alpha - \frac12 h_i^2,
\end{equation}
gives a well-defined operator on representations in the category $\mathscr O$ as $x^{(k)}_\alpha$ kills a given vector if $\mathrm{ht}(\alpha)$ is sufficiently large.  Here, $\mathrm{ht}(\sum_{i\in I}n_i \alpha_i)=\sum_{i\in I}n_i$ for all $(n_i)_{i\in I}\in \Z^{|I|}$.

The  definition of the operators $w^\pm_i$ can then be determined from \eqref{eq:relHX} and \eqref{eq:relexHX} together with the requirement that 
$J([h_i,x^\pm_i]) = [J(h_i), x^\pm_i]$. We obtain
\begin{equation}\label{eq:w_i}
  w^\pm_i = \pm \frac1{(\alpha_i,\alpha_i)} [v_i, x^\pm_i]
  + \frac12 \{h_i,x_i^\pm\}
\end{equation}
which, using \cite[Corollary~2.4]{Ka-book}, can be rewritten as
\begin{equation*} 
  \begin{split}
  w^+_i &=\frac12 \sum_{\alpha\in\Delta_+}
  \sum_{k=1}^{\dim\g_\alpha}
  x^{(k)}_{-\alpha} [x_i^+, x^{(k)}_\alpha] - \frac12 h_i x^+_i,
\\
  w^-_i &=- \frac12 \sum_{\alpha\in\Delta_+}
  \sum_{k=1}^{\dim\g_\alpha}
  [x_i^-, x^{(k)}_{-\alpha}]x^{(k)}_{\alpha}  - \frac12 x^-_i h_i. 
  \end{split}
\end{equation*}
These can also be viewed as well-defined operators on modules in the category $\mathscr O$. Let's see briefly how to obtain $w_i^+$. We have 
\begin{align}
 [v_i,x_i^+] = {} & -\frac{1}{2}(\alpha_i,\alpha_i)(\{h_i,x_i^+\}+h_ix_i^+)\nonumber\\
            &+\frac{1}{2}\sum_{\alpha\in \Delta_+\setminus\{\alpha_i\}}(\alpha,\alpha_i)\sum_{k=1}^{\dim \g_\alpha}(x_{-\alpha}^{(k)}[x_{\alpha}^{(k)},x_i^+]+[x_{-\alpha}^{(k)},x_i^+]x_{\alpha}^{(k)}) \label{nuxi}
\end{align}
and
\begin{align*}
\sum_{\alpha\in \Delta_+\setminus\{\alpha_i\}}(\alpha,\alpha_i) & \sum_{k=1}^{\dim \g_\alpha}(x_{-\alpha}^{(k)}[x_{\alpha}^{(k)},x_i^+]+[x_{-\alpha}^{(k)},x_i^+]x_{\alpha}^{(k)})\\
= {} &\sum_{\beta\in \Delta_+\setminus\{\alpha_i\}}\hspace{-.5em}(\alpha_i-\beta,\alpha_i)\hspace{-.5em}\sum_{k=1}^{\dim \g_{\beta-\alpha_i}}x_{\alpha_i-\beta}^{(k)}[x_i^+,x_{\beta-\alpha_i}^{(k)}] +\sum_{\alpha\in \Delta_+\setminus\{\alpha_i\}}(\alpha,\alpha_i)\sum_{k=1}^{\dim \g_\alpha}[x_{-\alpha}^{(k)},x_i^+]x_{\alpha}^{(k)} \\
& \text{ after setting $\beta = \alpha+\alpha_i$ and using Lemma~1.3 in \cite{Ka-book};} \\
= {} &\sum_{\alpha\in \Delta_+\setminus\{\alpha_i\}} \sum_{k=1}^{\dim \g_{\alpha-\alpha_i}} \left( (\alpha_i-\alpha,\alpha_i)x_{\alpha_i-\alpha}^{(k)}[x_i^+,x_{\alpha-\alpha_i}^{(k)}]
+(\alpha,\alpha_i) x_{\alpha_i-\alpha}^{(k)}[x_i^+,x_{\alpha-\alpha_i}^{(k)}] \right)\\
& \text{ by Corollary~2.4 in \cite{Ka-book};} \\
= {} & (\alpha_i,\alpha_i)\sum_{\beta\in \Delta_+}\sum_{k=1}^{\dim \g_{\beta}}x_{-\beta}^{(k)}[x_i^+,x_{\beta}^{(k)}].
\end{align*}
Combining this with \eqref{nuxi}, we obtain the desired expression for $w_i^+$. 

We set 
\begin{equation} 
J(h_i) = h_{i 1} + v_i \;\text{ and }\; J(x^\pm_i) = x^\pm_{i 1} + w^\pm_i, \label{eq:Jhx} 
\end{equation} 
viewed as operators on modules in $\mathscr{O}$. Later, we will see how to view these also as elements in a completion of the Yangian (\secref{Sec:comp}).

\begin{Remark}
In the special case where $\g$ is of affine type,  the summation which appears in the definition of the operator $v_i$ (see \eqref{eq:v_i}) needs
only to be taken over the set of {\it real\/} positive roots $\Delta^{\re}_+$ because $(\delta,\alpha_i) = 0 \; \forall\, i\in I$ (\cite[(6.2.4)]{Ka-book}). Since the multiplicity of a real root is $1$, we can change the notation $x^{(k)}_{\pm \alpha}$ to $x^\pm_\alpha$ for each $\alpha\in\Delta^\re_+$. The same applies (trivially) to $\g$ of finite type. 
\end{Remark}

\subsection{Commutation relations and reflection operators}\label{S:commrel}

The goal of this subsection is to obtain relations (see \propref{prop:Jroot}) which will be useful in the next section to verify that the coproduct on $Y(\g)$ respects the defining relations of the Yangian. 

In this subsection, we fix a module $V$ in the category $\mathscr{O}$ and view the generators $x_{ir}^{\pm}, h_{ir}$ along with $v_i, w_i^{\pm}$ as operators on $V$.  Let $\rho:Y(\mathfrak{g}) \rightarrow \End_{\C}(V)$ be the corresponding algebra homomorphism.

With the relation \eqref{eq:th} in mind, we set $\tilde v_i = v_i + h_i^2/2$. We will also write $x_i^{\pm}$ for $x_{i0}^{\pm}$ and $h_i$ for $h_{i0}$.

\begin{Lemma}\label{lem:vw}
The following relations hold.
\begin{align}
  & [h_i, v_j] = 0, \quad
  [h_i, w_j^\pm] = \pm(\alpha_i,\alpha_j) w_j^\pm, \label{hiwj}
\\
  & [\tilde v_i, x_j^\pm] = \pm (\alpha_i,\alpha_j) w^\pm_j, \label{vixj}
\\
  & [w^+_i,x^-_j] = \delta_{ij} v_i = [x^+_i, w^-_j], \label{wixj}
\\
  & [w^\pm_i,x^\pm_j] - [x^\pm_i, w^\pm_j]
  = \mp \frac{(\alpha_i,\alpha_j)}2 (x^\pm_i x^\pm_j + x^\pm_j x^\pm_i). \label{wixjpm} 
\end{align}
\end{Lemma}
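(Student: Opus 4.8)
The plan is to prove the four identities in \lemref{lem:vw} by direct computation, treating $v_i$, $w_i^\pm$ as the explicit operators \eqref{eq:v_i} and the displayed formulas for $w_i^\pm$ acting on a fixed module $V$ in category $\mathscr O$, where all the infinite sums are actually finite on each weight vector. The key organizing principle is that $v_i$ is built from the "Casimir-like" element $\Omega_i \defeq \sum_{\alpha\in\Delta_+}(\alpha,\alpha_i)\sum_k x_{-\alpha}^{(k)}x_\alpha^{(k)}$, and that such elements have predictable commutators with $\g$: one should first record the bracket $[\Omega_i, x_j^\pm]$, which by the same bookkeeping used in the excerpt to derive $w_i^+$ (reindexing $\beta=\alpha+\alpha_j$, Lemma~1.3 and Corollary~2.4 of \cite{Ka-book}) reduces to $\pm 2(\alpha_i,\alpha_j)\sum_{\beta\in\Delta_+}\sum_k x_{-\beta}^{(k)}[x_j^\pm,x_\beta^{(k)}]$ up to the $\alpha=\alpha_j$ correction term. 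Once this bracket is in hand, everything else is algebraic manipulation inside $U(\g)$ acting on $V$.

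First I would do \eqref{hiwj}. The relation $[h_i,v_j]=0$ is immediate because each summand $x_{-\alpha}^{(k)}x_\alpha^{(k)}$ has weight $0$ and $h_i^2$ is central for $\h$, so $v_j$ commutes with $\h$; then $[h_i,w_j^\pm]=\pm(\alpha_i,\alpha_j)w_j^\pm$ follows because $w_j^\pm = \pm(\alpha_j,\alpha_j)^{-1}[v_j,x_j^\pm]+\tfrac12\{h_j,x_j^\pm\}$ has weight $\pm\alpha_j$ (each term does), and $[h_i,\cdot]$ acts on a weight-$\pm\alpha_j$ operator by the scalar $\pm\langle\alpha_j,h_i\rangle=\pm(\alpha_i,\alpha_j)$. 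Next, \eqref{vixj}: unravel $\tilde v_i = v_i+h_i^2/2 = \tfrac14 h^\vee h_i + \tfrac12\Omega_i$, so $[\tilde v_i,x_j^\pm]=\pm\tfrac14 h^\vee(\alpha_i,\alpha_j)x_j^\pm + \tfrac12[\Omega_i,x_j^\pm]$; plug in the bracket formula for $[\Omega_i,x_j^\pm]$ and compare with $w_j^\pm = \pm(\alpha_j,\alpha_j)^{-1}[v_j,x_j^\pm]+\tfrac12\{h_j,x_j^\pm\}$ — but it is cleaner to recall that $w_j^\pm$ was \emph{defined} via \eqref{nuxi} precisely so that $[v_j,x_j^\pm]=(\alpha_j,\alpha_j)(\pm w_j^\pm\mp\tfrac12\{h_j,x_j^\pm\})+\dots$; the cross-term $(\alpha_i,\alpha_j)$ appears because the root-sum bracket of $\Omega_i$ against $x_j^\pm$ produces exactly the expression defining $w_j^\pm$ scaled by $(\alpha_i,\alpha_j)$ rather than $(\alpha_j,\alpha_j)$. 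I would present this as: compute $[\Omega_i,x_j^\pm]$, observe it equals $\pm\frac{2(\alpha_i,\alpha_j)}{(\alpha_j,\alpha_j)}\big([v_j,x_j^\pm] - (\text{the }h_j\text{-terms})\big)$ after matching, and collect.

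For \eqref{wixj} and \eqref{wixjpm} I would use the Jacobi identity to push the bracket with $x_j^\mp$ (resp. $x_j^\pm$) onto the defining bracket expression $w_i^\pm = \pm(\alpha_i,\alpha_i)^{-1}[v_i,x_i^\pm]+\tfrac12\{h_i,x_i^\pm\}$. Concretely, $[w_i^+,x_j^-] = (\alpha_i,\alpha_i)^{-1}[[v_i,x_i^+],x_j^-]+\tfrac12[\{h_i,x_i^+\},x_j^-]$; by Jacobi $[[v_i,x_i^+],x_j^-] = [[v_i,x_j^-],x_i^+]+[v_i,[x_i^+,x_j^-]] = [[v_i,x_j^-],x_i^+]+\delta_{ij}[v_i,h_i]$, and $[v_i,h_i]=0$ by \eqref{hiwj}, while $[v_i,x_j^-]$ is $\mp$ the negative-root analogue handled by \eqref{vixj} (for $i\ne j$ one gets something proportional to $w_j^-$ whose bracket with $x_i^+$ vanishes for weight reasons, and for $i=j$ one lands back on $v_i$). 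The symmetry $[w_i^+,x_j^-]=[x_i^+,w_j^-]$ then follows either from the same Jacobi manipulation run the other way or by invoking the $\mathfrak{sl}_2$-type symmetry of the construction. Relation \eqref{wixjpm} is the analogous computation with $x_j^\pm$ in place of $x_j^\mp$, where now the "anomalous" term $\mp\tfrac12(\alpha_i,\alpha_j)\{x_i^\pm,x_j^\pm\}$ on the right-hand side is produced by the $\tfrac12\{h_i,x_i^\pm\}$ piece of $w_i^\pm$ together with $[h_i,x_j^\pm]=\pm(\alpha_i,\alpha_j)x_j^\pm$.

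The main obstacle is \eqref{wixjpm} — and to a lesser extent the precise coefficient bookkeeping in \eqref{vixj} — because one must carefully track three sources of $(\alpha_i,\alpha_j)$-proportional terms (from $[\Omega_i,x_j^\pm]$, from the Serre-type correction at $\alpha=\alpha_j$, and from the anticommutator $\{h_i,x_i^\pm\}$) and verify that everything except the stated right-hand side cancels, using that $[x_i^\pm,x_j^\pm]$ for $i\ne j$ is a genuine root vector handled by the same $\Omega$-bracket identity. I expect \eqref{hiwj} to be nearly free, \eqref{vixj} to be essentially a restatement of how $w_j^\pm$ was constructed, \eqref{wixj} to be two or three Jacobi identities plus \eqref{hiwj}, and \eqref{wixjpm} to require the most care; all of it is "finite on each weight vector," so no convergence issue arises, which is exactly why we restricted to category $\mathscr O$.
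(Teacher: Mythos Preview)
Your treatment of \eqref{hiwj} and \eqref{vixj} matches the paper's: weight considerations give \eqref{hiwj} immediately, and \eqref{vixj} is precisely the computation already performed around \eqref{nuxi} (for the $+$ case; the $-$ case is analogous).

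For \eqref{wixj} and \eqref{wixjpm}, however, your Jacobi-identity approach has a genuine gap. You write that, for $i\ne j$, $[v_i,x_j^-]$ is ``proportional to $w_j^-$ whose bracket with $x_i^+$ vanishes for weight reasons.'' This is incorrect on two counts. First, $[v_i,x_j^-]$ is not simply proportional to $w_j^-$: since $\tilde v_i = v_i + \tfrac12 h_i^2$, one has $[v_i,x_j^-]=-(\alpha_i,\alpha_j)w_j^- + \tfrac{(\alpha_i,\alpha_j)}{2}\{h_i,x_j^-\}$. Second, and more seriously, $[w_j^-,x_i^+]$ does \emph{not} vanish for weight reasons: $w_j^-$ is an element of the enveloping algebra (an operator on $V$), not a root vector in $\g$, so the fact that $\alpha_i-\alpha_j$ is not a root is irrelevant. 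In fact the vanishing of $[x_i^+,w_j^-]$ for $i\ne j$ is exactly the second equality in \eqref{wixj}, which is what you are trying to prove. Your computation thus reduces $[w_i^+,x_j^-]$ to an expression involving $[x_i^+,w_j^-]$, and running the argument the other way gives the symmetric relation; you end up with a linear dependence between the two brackets but determine neither. The same circularity infects your outline for \eqref{wixjpm}.

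The paper's route avoids this by computing \eqref{wixj} and \eqref{wixjpm} directly from the explicit root-sum formulas for $w_i^\pm$ displayed just above \eqref{eq:Jhx}, using the same reindexing via \cite[Lemma~1.3, Corollary~2.4]{Ka-book} that produced those formulas in the first place. That direct computation is what you should carry out; the Jacobi shortcut does not close.
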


\begin{proof}
\eqref{hiwj} is straightforward to check, \eqref{vixj} was shown above in the $+$ case when $i=j$ (see \eqref{nuxi}) and the argument when $i\neq j$ is the same.

Let's consider now \eqref{wixj}. By \eqref{vixj} and the first relation in \eqref{hiwj}, we have
\begin{equation*}
 [w_i^+,x_j^-]=\frac{(\alpha_i,\alpha_j)}{(\alpha_i,\alpha_i)}[x_i^+,w_j^-] \quad \forall \; i,j\in I. 
\end{equation*}
Therefore, it suffices to prove the second equality in \eqref{wixj}, namely $[x_i^+,w_j^-]=\delta_{ij}v_i$. Using the definition of $w_j^-$, it is easily verified that
\begin{equation*}
 [x_i^+,w_j^-]=\delta_{ij}v_i-\frac{1}{2}\sum_{\alpha\in \Delta_+\setminus\{\alpha_i\}}\sum_{k=1}^{\dim \g_\alpha}\left(\left[x_j^-,[x_i^+,x_{-\alpha}^{(k)}] \right]x_\alpha^{(k)}+[x_j^-,x_{-\alpha}^{(k)}][x_i^+,x_\alpha^{(k)}] \right).
\end{equation*}
Hence, it suffices to show that the summation on the right-hand side vanishes. This fact follows from Lemma 1.3 and Corollary 2.4 of \cite{Ka-book}. In detail, 
using \cite[Lemma 1.3]{Ka-book} we can write 
\begin{equation*}
 \sum_{\alpha\in \Delta_+\setminus\{\alpha_i\}}\sum_{k=1}^{\dim \g_\alpha}[x_j^-,x_{-\alpha}^{(k)}][x_i^+,x_\alpha^{(k)}]=\sum_{\beta\in \Delta_+\setminus\{\alpha_i\}}\sum_{k=1}^{\dim \g_{\beta-\alpha_i}}[x_j^-,x_{\alpha_i-\beta}^{(k)}][x_i^+,x_{\beta-\alpha_i}^{(k)}].
\end{equation*}
By \cite[Corollary 2.4]{Ka-book},
\begin{equation*}
 \sum_{\beta\in \Delta_+\setminus\{\alpha_i\}}\sum_{k=1}^{\dim \g_{\beta-\alpha_i}}[x_j^-,x_{\alpha_i-\beta}^{(k)}][x_i^+,x_{\beta-\alpha_i}^{(k)}]= -\sum_{\beta\in \Delta_+\setminus\{\alpha_i\}}\sum_{k=1}^{\dim \g_\beta}\left[x_j^-,[x_i^+,x_{-\beta}^{(k)}] \right]x_\beta^{(k)},
\end{equation*}
which proves the assertion. 

Finally, let's establish \eqref{wixjpm}. Again appealing to the definition of $w_k^+$, a straightforward computation shows that \eqref{wixjpm} will hold provided 
\begin{align*}
 \sum_{\alpha\in \Delta_+\setminus\{\alpha_j\}}&\sum_{k=1}^{\dim \g_\alpha}\left([x_{-\alpha}^{(k)},x_j^+][x_i^+,x_\alpha^{(k)}]-x_{-\alpha}^{(k)}\left[x_i^+,[x_j^+,x_\alpha^{(k)}]\right]\right)\\
 &=\sum_{\alpha\in \Delta_+\setminus\{\alpha_i\}}\sum_{k=1}^{\dim \g_\alpha}\left([x_i^+,x_{-\alpha}^{(k)}][x_j^+,x_\alpha^{(k)}]-x_{-\alpha}^{(k)}\left[[x_i^+,x_\alpha^{(k)}],x_j^+\right]\right).
\end{align*}
By Lemma 1.3 and Corollary 2.4 of \cite{Ka-book}, both sides of the above vanish; this is proven in the same way as in the proof of \eqref{wixj}. 
\end{proof}

The previous lemma implies the following equivalences:
\begin{equation}\label{eq:equiv}
\begin{gathered}[m]
 [h_i, J(h_j)] = 0
  \quad \Longleftrightarrow\quad
  [h_{i0}, h_{j1}] = 0, 
\\
  [h_i, J(x_j^\pm)] = J([h_i, x_j^\pm]) 
  \quad \Longleftrightarrow\quad
  [h_{i 0}, x^\pm_{j 1}] = \pm (\alpha_i,\alpha_j) x^\pm_{j 1},
\\
  [J(h_i), x_j^\pm] = J([h_i, x_j^\pm])
  \quad \Longleftrightarrow\quad
  [\tilde h_{i 1}, x^\pm_{j 0}] = \pm (\alpha_i,\alpha_j) x^\pm_{j 1},
\\
  [J(x^+_i), x^-_j] = J([x^+_i, x^-_j]) = [x^+_i, J(x^-_j)]
  \quad \Longleftrightarrow\quad
   [x^+_{i1}, x^-_{j 0}] = \delta_{ij} h_{i 1} = [x^+_{i 0}, x^-_{j 1}],
\\
  [J(x^\pm_i), x^\pm_j] = [x^\pm_i, J(x^\pm_j)]
  \quad \Longleftrightarrow\quad
   [x^\pm_{i1}, x^\pm_{j 0}] - [x^\pm_{i 0}, x^\pm_{j 1}]
   = \pm \frac{(\alpha_i,\alpha_j)}2 \left(
    x^\pm_{i 0} x^\pm_{j 0} + x^\pm_{j 0} x^\pm_{i 0}\right).
\end{gathered}
\end{equation}

If $\alpha$ is a simple root $\alpha_i$, then $J(x_{\alpha}^{\pm})$ has already been defined, and now we want to obtain such operators for any positive real root $\alpha$. By restricting the adjoint action of $\rho(Y(\g))\subset \End_\C(V)$ (viewed as a Lie algebra) to the image of $\g$ under $\rho\circ \iota$ (see \eqref{eqn:iota}), we may equip $\rho(Y(\g))$ with the structure of a $\g$-module. As $x_i^\pm$ operate as the derivations $\ad(x_{i0}^\pm)$ on $\rho(Y(\g))$, which are locally nilpotent because of the Serre relations \eqref{eq:relDS}, Lemma 1.3.5 (b) of \cite{KuBook} implies that the operators 
\begin{equation}\label{tau_i}
  \tau_i \defeq \exp(\ad( e_i)) \exp(-\ad (f_i)) \exp(\ad(e_i)),
\end{equation}
where $e_i = \sqrt{2/(\alpha_i,\alpha_i)}x^+_{i 0}$, $f_i =
\sqrt{2/(\alpha_i,\alpha_i)}x^-_{i 0}$, define algebra automorphisms of $\rho(Y(\g))$. Set 
\begin{equation}\label{v_beta}
 \boldsymbol{v}_\beta=\sum_{\alpha\in \Delta_+}\sum_{k=1}^{\dim \g_\alpha}(\alpha,\beta)x_{-\alpha}^{(k)}x_{\alpha}^{(k)}\in \End_\C(V)\quad \forall \; \beta\in \Delta,
\end{equation}
and let $\widetilde Y(\g,V)$ denote the subalgebra of $\End_\C(V)$ generated by $\rho(Y(\g))$ and $\{\boldsymbol{v}_i\}_{i\in I}$ where $\boldsymbol{v}_i=\boldsymbol{v}_{\alpha_i}$.
\begin{Lemma}\label{L:tau}
 For each $i\in I$, $\tau_i$ extends to an automorphism of $\widetilde Y(\g,V)$ with 
\begin{equation}\label{tau_v}
\tau_i(\boldsymbol{v}_j)=\boldsymbol{v}_{s_i(\alpha_j)}+(\alpha_i,\alpha_j)\{x_{i0}^-,x_{i0}^+\}.
\end{equation}
\end{Lemma}
\begin{proof}
 The lemma will follow from the proof of the formula \eqref{tau_v}, which reduces to a computation in $U(\g)$. Let 
 $\tau_i^{\mathrm{ad}}$ be the automorphism of $U(\g)$ defined exactly as $\tau_i$ but with $e_i$ and $f_i$ viewed as elements of $\g$ rather than operators on $V$ (as in \cite[Lemma 3.8 (b)]{Ka-book}). 
 For $\alpha\in \Delta_+\setminus\{\alpha_i\}$, $\{\tau^\ad_i(x_{\alpha}^{(k)})\}_{k=1}^{\dim \g_\alpha}$ is a basis of $\g_{s_i(\alpha)}$ dual to 
 $\{\tau^\ad_i(x_{-\alpha}^{(k)})\}_{k=1}^{\dim \g_\alpha}$ (a basis of $\g_{-s_i(\alpha)}$) with respect to $(\cdot,\cdot)$. Thus,
 \begin{equation*}
  \tau^\ad_i\left(\sum_{k=1}^{\dim \g_\alpha} x_{-\alpha}^{(k)}x_{\alpha}^{(k)}\right)=\sum_{k=1}^{\dim \g_{s_i(\alpha)}} x_{-s_i(\alpha)}^{(k)}x_{s_i(\alpha)}^{(k)}\quad \forall \; \alpha\in \Delta_+\setminus\{\alpha_i\}.
 \end{equation*}
 The formula \eqref{tau_v} follows from this observation together with the following facts: $\tau^\ad_i(x_i^-x_i^+)=x_i^+x_i^-$, $(\alpha,\alpha_j)=(s_i(\alpha),s_i(\alpha_j))$, $s_i(\Delta_+\setminus\{\alpha_i\})=\Delta_+\setminus\{\alpha_i\}$, and $\rho\circ \iota\circ \tau^\ad_i=\tau_i \circ \iota$.  
\end{proof}
 As a consequence of the lemma, $\tau_i$ can be applied to $J(h_j)$, and thus to $J(x_j^\pm)$ since $J(x_j^\pm)=\pm(\alpha_j,\alpha_j)^{-1}[J(h_j),x_j^\pm]$ for all $j\in I$. This assertion automatically holds when $\g$ is finite-dimensional, but it heavily relies on \lemref{L:tau} when this is not the case.

\begin{Lemma}\label{lem:tauJ}
We have
\begin{gather*}
\tau_i(J(h_j)) = J(h_j) 
  - \frac{2(\alpha_i,\alpha_j)}{(\alpha_i,\alpha_i)} J(h_i).
\end{gather*}
\end{Lemma}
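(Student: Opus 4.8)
The plan is to compute directly how the automorphism $\tau_i = \exp(\ad e_i)\exp(-\ad f_i)\exp(\ad e_i)$ acts on the operator $J(h_j) = h_{j1} + v_j$ on $V$, using only the finitely many relations already available. Since $\tau_i$ is an algebra automorphism of the image of $Y(\g)$ in $\End_\C(V)$, it suffices to track the action of each exponential factor. The key input is the bracket of $e_i$ (equivalently $x_{i0}^+$) with $J(h_j)$: from Lemma~\ref{lem:vw} we have $[x_i^+, J(h_j)] = [x_i^+, h_{j1} + v_j]$, and combining \eqref{wixj} with \eqref{hiwj} and the relation $[\tilde h_{i1}, x_{j0}^\pm] = \pm(\alpha_i,\alpha_j)x_{j1}^\pm$ one gets that $[x_{i0}^+, J(h_j)]$ is a multiple of $J(x_i^+) = x_{i1}^+ + w_i^+$; similarly $[x_{i0}^-, J(h_j)]$ is a multiple of $J(x_i^-)$. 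More precisely, I expect $[e_i, J(h_j)] = -\tfrac{2(\alpha_i,\alpha_j)}{(\alpha_i,\alpha_i)}\sqrt{(\alpha_i,\alpha_i)/2}\, J(x_i^+)$ and an analogous formula with $f_i$ and $J(x_i^-)$, so that the adjoint action of $e_i, f_i$ on the two-dimensional-ish span of $\{J(h_j), J(x_i^\pm), \dots\}$ mimics exactly the finite-dimensional $\mathfrak{sl}_2$-triple computation.

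The cleanest route is to reduce to the rank-one case: the elements $e_i, f_i$ together with $J(h_i), J(x_i^\pm)$ generate (under bracketing) a copy of the adjoint-type module for the $\mathfrak{sl}_2$ spanned by $e_i, f_i, [e_i,f_i] = \tfrac{2}{(\alpha_i,\alpha_i)}h_i$, and $J(h_j)$ sits in a low-dimensional $\mathfrak{sl}_2$-submodule together with $J(x_i^+)$ and $J(x_i^-)$ and possibly $[e_i,[f_i,J(h_j)]]$. I would verify using the relations \eqref{hiwj}--\eqref{wixjpm} and \eqref{eq:equiv} that $\{J(h_j), J(x_i^\pm)\}$ spans (modulo the relevant scalars) a copy of the adjoint representation of this $\mathfrak{sl}_2$ shifted by $J(h_j) - \tfrac{(\alpha_i,\alpha_j)}{(\alpha_i,\alpha_i)}h_i^{(1)}$ behaving like the Cartan vector — concretely, that $[e_i,[f_i,J(h_j)]]$ closes back up without new generators. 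Then applying the classical fact that for the $\mathfrak{sl}_2$-triple the Weyl group element $\exp(\ad e)\exp(-\ad f)\exp(\ad e)$ sends the Cartan element $H$ to $-H$ and permutes the root vectors up to sign, one reads off that $\tau_i$ sends $J(h_j)$ to $J(h_j) - \tfrac{2(\alpha_i,\alpha_j)}{(\alpha_i,\alpha_i)} J(h_i)$, exactly as the Weyl reflection $s_i$ acts on the weight $\alpha_j^\vee$ in the Cartan.

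Concretely, the steps I would carry out are: (i) record $[e_i, h_j] = 0$ when $i\neq j$ is false in general — rather $[e_i,h_{j0}] = \tfrac{2}{(\alpha_i,\alpha_i)}(\alpha_i,\alpha_j)\cdot(\text{something})$, so first compute $[f_i, J(h_j)]$ and $[e_i,J(h_j)]$ explicitly in terms of $J(x_i^\mp)$; (ii) compute the second brackets $[e_i,[e_i,J(h_j)]]$, $[f_i,[f_i,J(h_j)]]$ and show they vanish (the $\mathfrak{sl}_2$ highest/lowest weight condition), using \eqref{wixjpm} with $i = j$ and \eqref{vixj}; (iii) compute $[e_i,[f_i,J(h_j)]]$ and express it as $J(h_j)$ minus a multiple of the Cartan operator $h_i^{(1)} := \tfrac{2}{(\alpha_i,\alpha_i)}J(h_i)$, thereby identifying the $\mathfrak{sl}_2$-module structure; (iv) feed these brackets into the three exponentials and sum the resulting (finite, since everything is locally nilpotent) series, which is the same bookkeeping as in \cite[\S3.8]{Ka-book} for the action of $\tau_i$ on $\h$. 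I expect step (iii) — pinning down $[e_i,[f_i,J(h_j)]]$ exactly, including the correct coefficient of the Cartan part — to be the main obstacle, since it requires carefully combining \eqref{wixj}, \eqref{vixj} and the translation dictionary \eqref{eq:equiv} and keeping track of the normalization $(x_i^+,x_i^-)=1$ versus $e_i = \sqrt{2/(\alpha_i,\alpha_i)}\,x_{i0}^+$; once that bracket is correct the remaining exponential computation is forced and routine.
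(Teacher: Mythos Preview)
Your plan is sound and would lead to a correct proof, but it is more laborious than the paper's argument and contains one small slip. In step~(iii) you write that $[e_i,[f_i,J(h_j)]]$ should equal ``$J(h_j)$ minus a multiple of $J(h_i)$''; in fact $[f_i,J(h_j)]$ is proportional to $J(x_i^-)$ and $[e_i,J(x_i^-)]$ is proportional to $J(h_i)$, so $[e_i,[f_i,J(h_j)]]$ is a pure multiple of $J(h_i)$ with no $J(h_j)$ term. This is not fatal --- once you compute the bracket correctly the rest of your exponential bookkeeping goes through --- but it signals that the $\algsl_2^{(i)}$-module you are describing is not irreducible when $j\neq i$: it is the direct sum of a trivial representation and the adjoint representation, with $J(h_j)$ lying in neither summand alone.

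The paper exploits exactly this decomposition to avoid computing any exponentials. It splits into two cases. For $j=i$, the span $\C J(x_i^+)\oplus\C J(h_i)\oplus\C J(x_i^-)$ is either zero or an irreducible three-dimensional $\algsl_2^{(i)}$-module (by \lemref{lem:vw} and \eqref{eq:equiv}), so $\tau_i(J(h_i))=-J(h_i)$ is immediate. For $j\neq i$, the paper observes directly from \eqref{eq:equiv} that the combination $J(h_j)-\tfrac{(\alpha_i,\alpha_j)}{(\alpha_i,\alpha_i)}J(h_i)$ is killed by both $\ad(e_i)$ and $\ad(f_i)$, hence fixed by $\tau_i$; combining with the $j=i$ case gives the formula. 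Your approach would recover the same answer after summing the exponential series, but the paper's trick of first isolating the $\algsl_2^{(i)}$-invariant part of $J(h_j)$ sidesteps all of your steps (ii)--(iv).
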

\begin{proof}
  We consider the subalgebra $\algsl_2^{(i)}$ of $Y(\g)$ spanned by $e_i,f_i,h_i$. The space $\End_{\C}(V)$ is a representation of $\algsl_2^{(i)}$ via the adjoint action. Let us prove the lemma first when $j=i$. 
  
  Consider the subspace $\C J(x^+_i) + \C J(h_i)+ \C J(x^-_i)$ of $\End_{\C}(V)$, which is stable under the adjoint action of $\algsl_2^{(i)}$ on $\End_{\C}(V)$ by \lemref{lem:vw} and \eqref{eq:equiv}. There are two cases: either $J(x^+_i) = 0 = J(h_i) = J(x^-_i)$, in which case the lemma is trivial, or that subspace is a three-dimensional irreducible representation of $\algsl_2^{(i)}$. In the latter case, one can check directly that $\tau_i(J(h_i)) = - J(h_i)$.   

Now assume that $j\neq i$.  The operator 
  \begin{equation*}
    J(h_j) - \frac{(\alpha_i,\alpha_j)}{(\alpha_i,\alpha_i)} J(h_i)
  \end{equation*}
  is killed by $\ad(e_i)$ and $\ad(f_i)$ by \eqref{eq:equiv}. Therefore this vector is fixed by
  $\tau_i$, hence the lemma holds also when $j\neq i$.
\end{proof}

Let $\alpha$ be a positive real root. By definition, there is an element $w$ of the Weyl group of $\g$ and a simple root $\alpha_j$ such that $\alpha=w(\alpha_j)$.  Then we define a corresponding (real) root vector by
\begin{equation*}
  x^\pm_\alpha = \tau_{i_1} \tau_{i_2} \cdots \tau_{i_{p-1}}(x^\pm_{i_p}),
\end{equation*}
where $w=s_{i_1}\cdots s_{i_p}$ is a reduced expression of $w$ and $i_p =
j$. (Here, $s_i$ denotes the simple reflection associated to $\alpha_i$.) This is independent of the choice of sequence $i_1$, $i_2$, \dots,
$i_p$ up to a constant multiple. This ambiguity will not be important in the following discussion.

We define
\begin{equation}
  J(x^\pm_\alpha) 
  \defeq \tau_{i_1} \tau_{i_2} \cdots \tau_{i_{p-1}}(J(x^\pm_{i_p})). \label{eq:Jxal}
\end{equation}
It follows from the proposition below that this is also independent of the choice of sequence $i_1$, $i_2$, \dots,
$i_p$ up to a constant multiple.
\begin{Proposition}\label{prop:Jroot}
Suppose $\alpha$ is a positive real root. Then
\begin{equation*}
   [J(h_i), x^\pm_\alpha] = [h_i, J(x^\pm_\alpha)]
   = \pm (\alpha_i, \alpha) J(x^\pm_\alpha) \quad \text{ for all }\; i\in I. 
\end{equation*}
\end{Proposition}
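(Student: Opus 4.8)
The plan is to reduce the general real root $\alpha$ to the case of a simple root by using the automorphisms $\tau_i$, exactly as $x_\alpha^\pm$ and $J(x_\alpha^\pm)$ were themselves defined. Write $\alpha = s_{i_1}s_{i_2}\cdots s_{i_{p-1}}(\alpha_{i_p})$ with a reduced expression, so that by definition $x_\alpha^\pm = \tau_{i_1}\cdots\tau_{i_{p-1}}(x_{i_p}^\pm)$ and $J(x_\alpha^\pm) = \tau_{i_1}\cdots\tau_{i_{p-1}}(J(x_{i_p}^\pm))$. The point is that everything in sight — $x_{i_p}^\pm$, $J(x_{i_p}^\pm)$, and all the $J(h_k)$ — lives in the image $\bar Y$ of $Y(\g)$ in $\End_\C(V)$, and $\tau := \tau_{i_1}\cdots\tau_{i_{p-1}}$ is an automorphism of $\bar Y$. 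Set $w = s_{i_1}\cdots s_{i_{p-1}}$ so that $\alpha = w(\alpha_{i_p})$.

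First I would establish the seed identity at a simple root: for all $k\in I$ and $j\in I$,
\[
  [J(h_k), x_j^\pm] = [h_k, J(x_j^\pm)] = \pm(\alpha_k,\alpha_j)\, J(x_j^\pm).
\]
This is immediate from \lemref{lem:vw} and the equivalences \eqref{eq:equiv}: the first two equalities are the first two lines of \eqref{eq:equiv} (applied with the roles of $i,j$ swapped as needed), and the coefficient $\pm(\alpha_k,\alpha_j)$ comes from $[h_k, x_j^\pm] = \pm(\alpha_k,\alpha_j)x_j^\pm$ together with the derivation property $J([h_k,x_j^\pm]) = [J(h_k),x_j^\pm] = [h_k, J(x_j^\pm)]$.

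Next I would apply the automorphism $\tau$ to the simple-root identity, taken at the index $i_p$:
\[
  \bigl[\tau(J(h_k)),\, x_\alpha^\pm\bigr]
  = \bigl[\tau(h_k),\, J(x_\alpha^\pm)\bigr]
  = \pm(\alpha_k,\alpha_{i_p})\, J(x_\alpha^\pm)
  = \pm(w^{-1}(\alpha_k),\alpha)\, J(x_\alpha^\pm),
\]
where in the last step I use $\tau$-invariance of the form on the root lattice, i.e. $(\alpha_k,\alpha_{i_p}) = (w(\alpha_k), w(\alpha_{i_p}))$... more precisely I want $(\alpha_k,\alpha_{i_p})$ rewritten using $\alpha = w(\alpha_{i_p})$. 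Now I invoke \lemref{lem:tauJ}, which tells me that $\tau_i$ acts on the span of the $J(h_k)$ by exactly the same linear transformation that $s_i$ acts by on the $h_k$ (equivalently on $\h^*$ via $\alpha_k \mapsto \alpha_k - a_{ik}\alpha_i$). Hence $\tau(J(h_k)) = \sum_l c_l J(h_l)$ where $\sum_l c_l h_l$ is precisely $w^{-1}$ applied to $h_k$ in the appropriate sense; tracking this through, $\bigl[\sum_l c_l J(h_l),\, x_\alpha^\pm\bigr] = [J(h_k), x_\alpha^\pm]$ after re-expanding, and the scalar works out to $\pm(\alpha_k,\alpha)$ because the form is Weyl-invariant. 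The identity $[h_k, J(x_\alpha^\pm)] = \pm(\alpha_k,\alpha)J(x_\alpha^\pm)$ follows in the same manner from $\tau(h_k)$ being a $\Z$-combination of the $h_l$, together with the known weight of $x_\alpha^\pm$ under $\h$, namely $[h_k, x_\alpha^\pm] = \pm(\alpha_k,\alpha)x_\alpha^\pm$.

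The main obstacle is purely bookkeeping: matching the linear change of basis that $\tau = \tau_{i_1}\cdots\tau_{i_{p-1}}$ induces on $\{J(h_l)\}$ (governed by \lemref{lem:tauJ}) with the geometric action of $w$ on $\h$ and $\h^*$, and then checking that the resulting scalar is $\pm(\alpha_k,\alpha)$ using only Weyl-invariance of $(\ ,\ )$. A clean way to avoid index-chasing is to phrase \lemref{lem:tauJ} as saying the map $\C^I \to \bar Y$, $\mathbf{c}\mapsto \sum_l c_l J(h_l)$ is equivariant for the $\tau$-action on the target and the natural (contragredient) $W$-action on the source, then simply transport the simple-root identity along this equivariance. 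One should also note the harmless scalar ambiguity in the definition of $x_\alpha^\pm$ and $J(x_\alpha^\pm)$ (flagged in the text): it cancels since both sides of the claimed identity scale the same way, so the statement is well-posed regardless of the choice of reduced word.
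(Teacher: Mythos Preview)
Your approach is essentially the same as the paper's: both use \lemref{lem:tauJ} to transport the simple-root identity (from \lemref{lem:vw} and \eqref{eq:equiv}) along the automorphism built from the $\tau_i$'s, and both extract the scalar $\pm(\alpha_i,\alpha)$ via Weyl-invariance of $(\ ,\ )$. The only organizational difference is that the paper runs an induction on the length $p$, peeling off one $\tau_{i_1}$ at a time, whereas you apply the full $\tau=\tau_{i_1}\cdots\tau_{i_{p-1}}$ at once and then invert. The inductive version is slightly cleaner precisely because it sidesteps the bookkeeping you flag: note that since $\tau_i(J(h_j))=J(s_i(h_j))$, composition gives $\tau(J(h_k))=J(w(h_k))$, not $J(w^{-1}(h_k))$ as you wrote; to recover $J(h_i)$ you then need coefficients $d_k$ with $\sum_k d_k h_k = w^{-1}(h_i)$, after which $\sum_k d_k(\alpha_k,\alpha_{i_p})=(w^{-1}(\alpha_i),\alpha_{i_p})=(\alpha_i,\alpha)$ as desired. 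With that correction your argument goes through, and your remark about the scalar ambiguity in $x_\alpha^\pm$, $J(x_\alpha^\pm)$ is apt.
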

\begin{proof}
 We prove the  proposition by  induction on $p$. If $p = 1$,  then 
  $x^\pm_\alpha = x^\pm_j$, and the assertion is a direct consequence of
  \lemref{lem:vw} and \eqref{eq:equiv}. Suppose the statement of the proposition holds for $x^\pm_\beta$ with $\beta = s_{i_2}\cdots
  s_{i_{p-1}}(\alpha_{i_p})$. Then
  \begin{equation*}
    \begin{split}
    & [J(h_i), x^\pm_\alpha] = \tau_{i_1} \left([ \tau_{i_1}^{-1}J(h_i), x^\pm_\beta]\right)
    = \tau_{i_1} \left([ J(h_i)
      - \frac{2(\alpha_i,\alpha_{i_1})}{(\alpha_{i_1},\alpha_{i_1})}J(h_{i_1}),
      x^\pm_\beta]\right)
\\
    =\; &
    \pm \left( (\alpha_i, \beta) 
      - \frac{2(\alpha_i,\alpha_{i_1})}{(\alpha_{i_1},\alpha_{i_1})}
      (\alpha_{i_1},\beta)\right)
      \tau_{i_1} J(x^\pm_\beta)
      = \pm(s_{i_1}\alpha_i, \beta) J(x^\pm_\alpha)
      = \pm(\alpha_i, \alpha) J(x^\pm_\alpha),
    \end{split}
  \end{equation*}
  where we have used \lemref{lem:tauJ} in the second equality, and the induction assumption in the third. Similarly, 
 the second equality and the relation $[J(h_j), x^\pm_\beta] = [h_j, J(x^\pm_\beta)]$, for all $j\in I$, imply that 
 \begin{equation*}
  [J(h_i), x^\pm_\alpha]=\tau_{i_1}\left([h_i-\frac{2(\alpha_i,\alpha_{i_1})}{(\alpha_{i_1},\alpha_{i_1})}h_{i_1},J(x_\beta^\pm)]\right)
  =\tau_{i_1}\left([\tau_{i_1}^{-1}h_i,J(x_\beta^\pm)]\right)=[h_i,J(x_\alpha^\pm)]. 
 \end{equation*}
Therefore, by induction, the assertion is true for all $\alpha \in \Delta_+^{\mathrm{re}}$.
\end{proof}

\section{Coproduct and modules in the category $\mathscr{O}$}\label{sec:coprod}

\subsection{Casimir operators}
Fix a basis $\{h_{(k)}\}$ of $\h$, and let $\{h^{(k)}\}$ denote its dual basis with respect to the invariant inner product
$(\ ,\ )$.
Given a positive root $\alpha$ we choose a base $\{ x_{\alpha}^{(k)}
\}$ of $\g_{\alpha}$ and the dual base $\{ x_{-\alpha}^{(k)}\}$ of
$\g_{-\alpha}$ so that $(x_\alpha^{(k)}, x_{-\alpha}^{(l)}) =
\delta_{kl}$ as before \eqref{eq:v_i}. 

Let us fix modules $V_1$ and $V_2$ in $\mathscr{O}$. We define an operator $\Omega_+$ on $V_1\otimes V_2$ by:
\begin{equation}
  \Omega_+ \defeq 
  \sum_{k=1}^{\dim \mathfrak h} h^{(k)}\otimes h_{(k)}
  + \sum_{\alpha\in\Delta_+}
  \sum_{k=1}^{\dim \g_\alpha}
  x_{-\alpha}^{(k)}\otimes x_\alpha^{(k)}. \label{HalfCas}
\end{equation}
The definition of $\Omega_+$ is independent of the choice of bases.

Note that $\Omega_+$ does not coincide with the usual Casimir operator when $\g$ is finite-dimensional as it does not contain the term
\(
\sum_{\alpha\in\Delta_+}
  \sum_{k=1}^{\dim \g_\alpha}
  x_\alpha^{(k)} \otimes x_{-\alpha}^{(k)}.
\)
We call it the {\it half Casimir operator}. 

In the general case, 
the Casimir operator $\Omega$ is replaced with the \textit{generalized} Casimir operator (denoted $\Omega^{\mathrm{gen}}$) which is given by
\begin{equation*}
 \Omega^{\mathrm{gen}}=2\nu^{-1}(\rho)+\sum_{k=1}^{\dim \h} h^{(k)} \otimes h_{(k)}+2\sum_{\alpha\in \Delta_+}\sum_{k=1}^{\dim \g_\alpha}x_{-\alpha}^{(k)} \otimes x_{\alpha}^{(k)}.
\end{equation*}
(See \S 2.5 of \cite{Ka-book}. Here, $\nu\colon\h \rightarrow \h^*$ is the linear isomorphism given by $\nu(h_1)(h_2) = (h_1,h_2)$ for all $h_1,h_2\in\h$ and $\rho\in\h^*$ is any linear functional such that $\rho(\alpha_i^{\vee}) = \frac{1}{2} a_{ii}$ for all $i\in I$.)  $\Omega^{\mathrm{gen}}$ coincides with the usual Casimir element when $\g$ is finite-dimensional. 

The half Casimir operator $\Omega_+$ does not commute with coproducts of the generators $x_i^\pm$ or $h_i$. It does however satisfy the following simple commutation relations: 

\begin{Lemma}\label{lem:sq}
  We have
\begin{align}
  & [\square (h), \Omega_+] = 0 \quad \text{for $h\in\mathfrak h$}, \label{eq:hO}
\\
  & [\square (x_i^+), \Omega_+] = - x_i^+ \otimes h_i, \label{eq:xO}
\\
  & [\square (x^-_i), \Omega_+] =  h_i\otimes x^-_i, \label{eq:yO}
\end{align}
for all $i\in I$, where $\square (X) = X\otimes 1 + 1\otimes X$.
\end{Lemma}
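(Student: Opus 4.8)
The plan is to compute each bracket directly from the definition \eqref{HalfCas} of $\Omega_+$, using only the Lie algebra structure of $\g$ and the compatibility of the chosen dual bases with the invariant form, so that the module structures of $V_1$ and $V_2$ play no role beyond linearity. Write $\Omega_+ = \Omega_+^{\h} + \Omega_+^{\mathrm{root}}$ where $\Omega_+^{\h} = \sum_k u^k \otimes u_k$ and $\Omega_+^{\mathrm{root}} = \sum_{\alpha\in\Delta_+}\sum_k x_{-\alpha}^{(k)}\otimes x_\alpha^{(k)}$. For \eqref{eq:hO}, since $h\in\h$ acts diagonally by the root grading, $[h, u_k] = 0$ in the Cartan part while $[h, x_{\pm\alpha}^{(k)}] = \pm\alpha(h)x_{\pm\alpha}^{(k)}$; the two contributions coming from $h\otimes 1$ and $1\otimes h$ acting on $x_{-\alpha}^{(k)}\otimes x_\alpha^{(k)}$ are $-\alpha(h)$ and $+\alpha(h)$ times that term, so they cancel termwise. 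This handles \eqref{eq:hO}.

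For \eqref{eq:xO} I would bracket $x_i^+\otimes 1 + 1\otimes x_i^+$ against the two pieces of $\Omega_+$ separately. Against $\Omega_+^{\h}$: the first tensor factor gives $\sum_k [x_i^+, u^k]\otimes u_k = -\sum_k \alpha_i(u^k)x_i^+\otimes u_k = -x_i^+\otimes \nu^{-1}(\alpha_i)$ (by the defining property of dual bases and $\nu$), while the second gives $\sum_k u^k\otimes[x_i^+,u_k] = -\sum_k u^k\otimes\alpha_i(u_k)x_i^+ = -\nu^{-1}(\alpha_i)\otimes x_i^+$; note $\nu^{-1}(\alpha_i) = \tfrac{(\alpha_i,\alpha_i)}{2}\alpha_i^\vee = h_i$ by the normalization in \eqref{eq:Cartan}. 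Against $\Omega_+^{\mathrm{root}}$: the first factor contributes $\sum_{\alpha,k}[x_i^+,x_{-\alpha}^{(k)}]\otimes x_\alpha^{(k)}$ and the second $\sum_{\alpha,k}x_{-\alpha}^{(k)}\otimes[x_i^+,x_\alpha^{(k)}]$. The key computational point is that each of these sums, after reindexing $\beta = \alpha\mp\alpha_i$ and invoking invariance of the form exactly as in the $w_i^+$ computation already carried out in the excerpt (the manipulation ``after setting $\beta=\alpha+\alpha_i$ and using Lemma~1.3 in \cite{Ka-book}''), collapses so that all terms with $\alpha\neq\alpha_i$ match up and cancel against the $\h$-part contributions, leaving only the $\alpha=\alpha_i$ term $[x_i^+,x_i^-]\otimes x_i^+ = h_i\otimes x_i^+$ from the first factor and $x_i^-\otimes[x_i^+,x_i^+] = 0$ from the second. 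Collecting: the $\h$-part gave $-h_i\otimes x_i^+ - x_i^+\otimes h_i$ plus, from reindexing the root part, a compensating $+h_i\otimes x_i^+$, and we are left with $-x_i^+\otimes h_i$, which is \eqref{eq:xO}. Relation \eqref{eq:yO} follows by the symmetric argument with $x_i^-$, or alternatively by applying the Chevalley involution / transpose-type symmetry, taking care of the overall sign change.

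The routine part is the bookkeeping of the reindexing $\beta\leftrightarrow\alpha\pm\alpha_i$ together with the facts $\dim\g_\beta = \dim\g_{-\beta}$, the duality $(x_\alpha^{(k)},x_{-\alpha}^{(l)}) = \delta_{kl}$, and invariance $([x_i^+,a],b) = -(a,[x_i^+,b])$; all of these are exactly what was used to derive the formula for $w_i^+$ earlier, so I would explicitly point back to that calculation rather than redo it. The main obstacle — though it is really just care rather than difficulty — is to correctly track which terms of the root sum pair with which after the shift by $\alpha_i$: a positive root $\alpha$ with $\alpha - \alpha_i$ a positive root contributes to one place, with $\alpha-\alpha_i$ a negative root (i.e. $\alpha = \alpha_i$ or $\alpha-\alpha_i\in-\Delta_+$) to another, and the boundary term $\alpha=\alpha_i$ is precisely the one that survives; getting the multiplicities and the coefficient (no factor of $(\alpha_i,\alpha_i)$ appears here, unlike in $v_i$, because $\Omega_+$ has no $(\alpha,\alpha_i)$ weighting) right is the only place an error could creep in. Once that is done, \eqref{eq:hO}--\eqref{eq:yO} drop out immediately.
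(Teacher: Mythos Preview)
Your approach is correct and essentially identical to the paper's: both rest on the same reindexing-plus-invariance manoeuvre (the paper cites \cite[Lemmas~1.3, 2.4]{Ka-book} and writes out $[1\otimes x_i^-,\Omega_+]$, obtaining $h_i\otimes x_i^- - [x_i^-\otimes 1,\Omega_+]$, which is exactly your cancellation in the $-$ case). One wording slip to fix: the $\alpha\neq\alpha_i$ terms in the two root sums cancel against \emph{each other}, not ``against the $\h$-part contributions''; your final ``Collecting'' sentence gets the bookkeeping right, so just rephrase the preceding sentence accordingly.
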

\begin{proof}
These relations can be proven using the same techniques as used to prove \lemref{lem:vw}. The first formula is a simple consequence of the definition. The
second and third formulas follow from \cite[Lemmas~1.3, 2.4]{Ka-book}. For example,
\begin{align*}
 [ 1\otimes x^-_i, \Omega_+] = {} &
  \sum_{k=1}^{\dim \mathfrak h} h^{(k)}\otimes [x^-_i, h_{(k)}]
  + \sum_{\alpha\in\Delta_+}
  \sum_{k=1}^{\dim \g_\alpha}x_{-\alpha}^{(k)}\otimes [x^-_i, x_\alpha^{(k)}]
\\
  = {} &
  \sum_{k=1}^{\dim \mathfrak h} h^{(k)}\otimes (h_i, h_{(k)}) x^-_i 
  - x^-_i\otimes h_i + 
  \sum_{\alpha\in\Delta_+ \setminus \{\alpha_i\}}
  \sum_{k=1}^{\dim \g_\alpha}x_{-\alpha}^{(k)}\otimes [x^-_i, x_\alpha^{(k)}]
\\
  = {} &
  h_i \otimes x^-_i - x^-_i\otimes h_i - 
  \sum_{\alpha\in\Delta_+ \setminus \{\alpha_i\}}
  \sum_{k=1}^{\dim \g_{\alpha-\alpha_i}}[x^-_i, x_{-(\alpha-\alpha_i)}^{(k)}]\otimes 
  x_{\alpha-\alpha_i}^{(k)}
\\
  = {} &
  h_i \otimes x^-_i - [x^-_i\otimes 1, \Omega_+]. \qedhere
\end{align*}
\end{proof}

\subsection{The coproduct and statement of the main theorem}\label{Sec:cop}

Let $\square$ be the operator defined by $\square (X) = X\otimes 1 +
1\otimes X$, as in \lemref{lem:sq}. It is {\it not\/} an algebra homomorphism, but satisfies
$\square([X,Y]) = [\square (X), \square (Y)]$ for all $X,Y\in Y(\g)$.

We want to define an algebra homomorphism $\Delta_{V_1,V_2}\colon Y(\g) \rightarrow \End_{\C}(V_1 \otimes V_2)$, so we first specify it on the generators of $Y(\g)$ and then prove afterwards that this assignment does indeed extend to an algebra homomorphism (see \thmref{thm:coproduct}). 
\begin{Definition}\label{def:Delta}
$\Delta_{V_1,V_2}$ assigns to the generators of $Y(\g)$ the following operators in $\End_{\C}(V_1 \otimes V_2)$:  by 
\begin{equation}
  \label{eq:assign}
\begin{gathered}
  \Delta_{V_1,V_2} (h) = \square (h) \quad (\text{for $h\in\mathfrak h$}), \qquad
  \Delta_{V_1,V_2} (x^\pm_{i 0}) = \square (x^\pm_{i 0}),
\\
\begin{aligned}[t]
  \Delta_{V_1,V_2} (h_{i 1}) &= \square (h_{i 1}) + h_{i 0} \otimes h_{i 0} 
  + [h_{i 0}\otimes 1, \Omega_+]\\
  &= h_{i 1}\otimes 1 + 1\otimes h_{i 1} + h_{i 0} \otimes h_{i 0} 
  - \sum_{\alpha\in\Delta_+} (\alpha_i,\alpha)
  \sum_{k=1}^{\dim \g_\alpha} x^{(k)}_{-\alpha}\otimes x^{(k)}_\alpha.
\end{aligned}
\end{gathered}
\end{equation}
\end{Definition}

It follows that \begin{equation}
   \Delta_{V_1,V_2} (\tilde h_{i 1}) = 
   \square (\tilde h_{i 1}) + [h_{i 0}\otimes 1, \Omega_+]. \label{Deltath}
\end{equation}
When $V_1$ and $V_2$ are fixed, we simply write $\Delta$.

\begin{Theorem}\label{thm:coproduct}
  Assume $\g$ is either finite-dimensional (but not $\algsl_2$) or of affine type (but not of type $A_1^{(1)}$ or $A^{(2)}_2$).
  Then the assignment $\Delta$ defines an algebra homomorphism $\Delta
  \colon Y(\g)\to \End_{\C}(V_1 \otimes V_2)$.
\end{Theorem}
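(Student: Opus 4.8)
The plan is to verify that the operators assigned by $\Delta$ in \defref{def:Delta} satisfy all of the defining relations of $Y(\g')$ together with the Cartan relations \eqref{eq:Cartan}, and for this we invoke \thmref{thm:deduction}: it suffices to check the \emph{minimalistic} list \eqref{eq:relHH'}--\eqref{eq:relDS'} (plus the straightforward relations \eqref{eq:Cartan} involving $\h$). This reduces an infinite family of identities to a short finite list. The hypotheses on $\g$ (not $\algsl_2$, not $A_1^{(1)}$ or $A_2^{(2)}$) are exactly the hypotheses under which \thmref{thm:deduction} applies (see \remref{CMinv}), so this reduction is legitimate.

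First I would dispose of the easy relations. The assignments $\Delta(h)=\square(h)$, $\Delta(x_{i0}^\pm)=\square(x_{i0}^\pm)$ are just the cocommutative coproduct restricted to $U(\g)$, so all relations among these---\eqref{eq:relHX'} for $s=0$, \eqref{eq:relXX'} for $r+s=0$, \eqref{eq:relDS'} (the Drinfeld--Serre / Chevalley--Serre relations in degree zero), and the $\h$-relations \eqref{eq:Cartan}---hold because $\square$ is compatible with brackets, $\square([X,Y])=[\square(X),\square(Y)]$. Next, \eqref{eq:relHH'} splits into $[\square(h_{i0}),\square(h_{j0})]=0$ (trivial), and the mixed and top cases involving $h_{i1}$; here I would use \lemref{lem:sq}\eqref{eq:hO}, which gives $[\square(h),\Omega_+]=0$, to see $[\Delta(h),\Delta(h_{j1})]=0$, and for $[\Delta(h_{i1}),\Delta(h_{j1})]=0$ I would pass to $J(h_i)$ via $h_{i1}=J(h_i)-v_i$ (equivalently $\tilde h_{i1}=J(h_i)-\tilde v_i$) and apply \corref{cor:JJ}: the needed identity becomes $[J(h_i),J(h_j)]=-[v_i,v_j]$, which in turn follows from \propref{Jhv} together with the observation that $[J(h_i),v_j]$ is manifestly symmetric in $i,j$ (the computation in the proof of \propref{Jhv}), forcing $[J(h_i),J(h_j)]=[J(h_i),\square?]\dots$---more precisely, writing $J(h_i)=\tilde h_{i1}+\tilde v_i$ and using $[\tilde h_{i1},\tilde h_{j1}]=0$ (which is what we want), $[\tilde v_i,\tilde v_j]$ known from $U(\g)$, and $[\tilde h_{i1},\tilde v_j]=[J(h_i),v_j]-[\tilde v_i,\tilde v_j]+\dots$ symmetric, one solves for $[\tilde h_{i1},\tilde h_{j1}]=0$. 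I would organize this as: establish $[\Delta(J(h_i)),\Delta(J(h_j))]+[\Delta(v_i),\Delta(v_j)]=0$ directly from \eqref{eq:coproJ}, \propref{Jhv} applied on $V_1\otimes V_2$, and \lemref{lem:sq}, then translate back.

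The substantive relations are \eqref{eq:relHX'} for $s=1$, \eqref{eq:relexHX2'}, \eqref{eq:relXX'} for $r+s=1$, and \eqref{eq:relexXX'}. The strategy for all four is the same: by the equivalences \eqref{eq:equiv}, these relations (with the indices $0,1$) are \emph{equivalent} to the corresponding $J$-relations $[h_i,J(x_j^\pm)]=J([h_i,x_j^\pm])$, $[J(h_i),x_j^\pm]=J([h_i,x_j^\pm])$, $[J(x_i^+),x_j^-]=\delta_{ij}J(h_i)=[x_i^+,J(x_j^-)]$, and $[J(x_i^\pm),x_j^\pm]=[x_i^\pm,J(x_j^\pm)]$, now for operators on $V_1\otimes V_2$. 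So I must show $\Delta(J(h_i))$, $\Delta(x_i^\pm)=\square(x_i^\pm)$, and $\Delta(J(x_i^\pm))$---the last defined to be $\Delta(\tilde h_{i1})\,{}$-adjusted plus $\Delta(w_i^\pm)$, i.e.\ $\pm(\alpha_i,\alpha_i)^{-1}[\Delta(\tilde h_{i1}),\square(x_i^\pm)]+\tfrac12\{\square(h_i),\square(x_i^\pm)\}$---satisfy the $\g$-module-style relations of the second Drinfeld presentation. Each reduces to a commutator computation in $\End_\C(V_1\otimes V_2)$ using only: the definitions \eqref{eq:coproJ}, \eqref{Deltath}; the $U(\g)$-coproduct compatibility of $\square$; \lemref{lem:sq}\eqref{eq:xO}--\eqref{eq:yO} for the $\Omega_+$ (equivalently $\tfrac12\Omega$) correction terms; \lemref{lem:vw} and \eqref{eq:equiv} for the single-tensor-factor pieces; and \propref{prop:Jroot} / \corref{cor:JJ} where imaginary-root or cross-terms appear. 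For instance $[\Delta(J(h_i)),\square(x_j^\pm)]=\square([J(h_i),x_j^\pm])+\tfrac12[[h_{i0}\otimes1,\Omega],\square(x_j^\pm)]$, and the bracket of $\Omega$ with $\square(x_j^\pm)$ vanishes (full Casimir is central in $U(\g)$-sense for the diagonal action modulo the $\rho$-shift which doesn't appear here since we took $\Omega$ not $\Omega^{\mathrm{gen}}$)---wait, that needs care: one uses that $[\square(x_j^\pm),\Omega]=0$ because $\Omega$ commutes with the diagonal $\g$-action, which is a standard fact; the asymmetry of $\Omega_+$ is precisely why \lemref{lem:sq} has nonzero right-hand sides, whereas the symmetric $\Omega$ used in \eqref{eq:coproJ} gives a clean answer. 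This is why the paper rewrote $\Delta(J(h_i))$ in terms of $\Omega$.

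The main obstacle, and the bulk of the work, is checking \eqref{eq:relexXX'} --- equivalently $[\Delta(J(x_i^\pm)),\square(x_j^\pm)]=[\square(x_i^\pm),\Delta(J(x_j^\pm))]$ --- and verifying the Serre relations \eqref{eq:relDS'} for the level-$1$ generators is \emph{not} needed (they are degree-zero, handled by $\square$), but the consistency of the \emph{definition} of $\Delta(J(x_i^\pm))$ (that $\pm(\alpha_i,\alpha_i)^{-1}[\Delta(\tilde h_{i1}),\square(x_i^\pm)]$ really equals $\square(J(x_i^\pm))+$ the expected $\Omega$-correction) requires expanding $[[h_{i0}\otimes1,\Omega_+],\square(x_i^\pm)]$ via \lemref{lem:sq} and matching against the explicit $w_i^\pm$ formula. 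The genuinely delicate point in the affine case is that sums like $\sum_{\alpha\in\Delta_+}$ in $\Omega_+$ and in $v_i,w_i^\pm$ are infinite, so every manipulation must be justified as a well-defined operator on $V_1\otimes V_2\in\mathscr O$ (only finitely many terms act nontrivially on any given vector, by the category-$\mathscr O$ condition), and reindexing a root sum ($\beta\mapsto\beta+\alpha_i$) must preserve this finiteness---exactly the kind of bookkeeping done in the derivation of $w_i^+$ above. I expect the hardest single identity to be \eqref{eq:relexXX'}, where one genuinely needs \propref{prop:Jroot} to control the root-vector contributions coming from the $\Omega$-correction terms in both $\Delta(J(x_i^\pm))$ and $\Delta(J(x_j^\pm))$, and the cancellation is the heart of why the coproduct is well-defined.
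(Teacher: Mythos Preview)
Your overall architecture is right: reduce via \thmref{thm:deduction} to the minimalistic list \eqref{eq:relHH'}--\eqref{eq:relDS'}, dispose of everything involving only level-zero generators using $\square$, and save the $J$-machinery for where it is actually needed. But two points deserve correction.

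\textbf{Misplaced difficulty in Part I.} For \eqref{eq:relHX'} ($s=1$), \eqref{eq:relXX'} ($r+s=1$), \eqref{eq:relexHX2'}, and \eqref{eq:relexXX'}, the paper does \emph{not} pass through the $J$-operators or \propref{prop:Jroot} at all. It first derives the closed formula $\Delta(x^+_{i1})=\square(x^+_{i1})-[1\otimes x^+_{i0},\Omega_+]$ (and its analogue for $x^-_{i1}$), and then each of these relations becomes a short direct computation using only \lemref{lem:sq} and the Jacobi identity. In particular \eqref{eq:relexXX'}, which you flag as the hardest, is in fact a four-line calculation once one has that formula; no root-vector bookkeeping and no \propref{prop:Jroot} are required. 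Your proposed detour through $\Delta(J(x_i^\pm))$ is viable in principle but strictly harder, because you then have to re-derive the analogues of \lemref{lem:vw} for the $\Delta$-images---work the direct approach avoids.

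\textbf{A genuine gap in Part II.} The real content of the theorem is $[\Delta(\tilde h_{i1}),\Delta(\tilde h_{j1})]=0$, and here your sketch is incomplete. Writing $\tilde h_{i1}=J(h_i)-\tilde v_i$ and expanding gives
\[
[\Delta(\tilde h_{i1}),\Delta(\tilde h_{j1})]
=\bigl([\Delta J(h_i),\Delta J(h_j)]+[\Delta\tilde v_i,\Delta\tilde v_j]\bigr)
-\bigl([\Delta J(h_i),\Delta\tilde v_j]-[\Delta J(h_j),\Delta\tilde v_i]\bigr),
\]
so one needs \emph{two} identities: the vanishing of the first bracket \emph{and} the symmetry $[\Delta J(h_i),\Delta\tilde v_j]=[\Delta J(h_j),\Delta\tilde v_i]$. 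You only propose the first. Moreover, the proof of the first identity is not just ``\propref{Jhv} applied on $V_1\otimes V_2$ plus \lemref{lem:sq}'': after using \propref{prop:Jroot} to kill the $\square(J(h_i))$-cross-terms, one is left with the double root-sums
\[
[[h_{i0}\otimes1,\Omega_\pm],[h_{j0}\otimes1,\Omega_\pm]]
\quad\text{and}\quad
[\square(\tilde v_i),[h_{j0}\otimes1,\Omega_+-\Omega_-]]
\]
(and their $(i\leftrightarrow j)$ partners), and showing these cancel requires a reindexing argument over pairs $(\alpha,\beta)\in\Delta_+^{\mathrm{re}}\times\Delta_+^{\mathrm{re}}$ in which one must use that $\alpha\pm\beta$ imaginary forces the coefficient $(\alpha_i,\alpha)(\alpha_j,\beta)-(\alpha_j,\alpha)(\alpha_i,\beta)$ to vanish. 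This is exactly where the finite-or-affine hypothesis enters, and your proposal does not surface it. The second identity (the cross-term symmetry) is proven by the same mechanism and cannot be omitted.
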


\begin{Remark}
When $\g$ is finite-dimensional (including when $\g\cong\algsl_2$), this theorem is already known (see \cite{Drinfeld}) but a proof has never appeared in the literature. In this case, \begin{equation}\label{eq:coproJ}
  \begin{split}
  \Delta (J(h_i)) & = \Delta(\tilde{h}_{i1}) + \Delta(\tilde{v}_i) 
\\  
   &= 
  \square (J(h_i)) 
  + \frac12 \sum_{\alpha\in\Delta_+} (\alpha,\alpha_i)
  \sum_{k=1}^{\dim \g_\alpha}
  \left( x^{(k)}_{\alpha} \otimes x^{(k)}_{-\alpha} 
    - x^{(k)}_{-\alpha}\otimes x^{(k)}_{\alpha} 
     \right)
\\
 &= 
  \square (J(h_i)) 
  +
  \frac 12 [h_{i 0}\otimes 1, \Omega],
  \end{split}
\end{equation}
where $\Omega$ is the Casimir element in $\g\otimes\g$. We have excluded $Y(\algsl_2)$ simply because the proof below would have to be modified in this case. As for the case when $\g$ is of type $A_1^{(1)}$, a formula for a coproduct identical to ours is given in \cite{BoLe}, but it is not clear if their definition of the Yangian of the affine Lie algebra $\widehat{\algsl_2}$ is equivalent to the one which can be found in \cite{BeTs} and in \cite{Ko} (to this effect, see also Remark 5.2 in \cite{Ko}).
\end{Remark}
\begin{Remark}
In \secref{Sec:comp}, we will explain how to replace $\End_{\C}(V_1 \otimes V_2)$ with a completion of the tensor product $Y(\g) \otimes Y(\g)$.
\end{Remark}

The rest of this section is devoted to the proof of this theorem. We will be able to use \thmref{thm:deduction} because we will be working under the same assumptions in the finite or affine setting - see \remref{CMinv}. 
Note also that if we check that the restriction of $\Delta$ to
$Y(\g')$ is an algebra homomorphism, the compatibility
for the extra relations \eqref{eq:Cartan} is straightforward.

Therefore, it is enough to check the compatibility of the relations listed
in \thmref{thm:deduction}.

In what follows, it will be useful to have formulas for $\Delta (x_{i1}^\pm)$ for all $i\in I$. From \eqref{eq:rec} with $r=0$, we obtain:
\begin{equation*}
  \begin{split}
  \Delta (x^\pm_{i 1}) 
  &= \pm (\alpha_i, \alpha_i)^{-1} \Delta ([\tilde h_{i 1}, x^\pm_{i 0}])
\\
  &= \pm (\alpha_i, \alpha_i)^{-1} [\square (\tilde h_{i 1})
  + [h_{i 0}\otimes 1, \Omega_+], \square (x^\pm_{i 0})]
\\
  &= \square (x^\pm_{i 1})
  \pm (\alpha_i, \alpha_i)^{-1}[[
  h_{i 0}\otimes 1, \Omega_+], \square (x^\pm_{i 0})].
  \end{split}
\end{equation*}
We consider the $+$ case first. Note that
\begin{align*}
  [[h_{i 0}\otimes 1, \Omega_+], \square (x^+_{i 0})]
  = \;&- [[ 1\otimes h_{i 0}, \Omega_+], \square (x^+_{i 0})]
\\
  =\; &  - [[1 \otimes h_{i 0}, \square (x^+_{i 0})], \Omega_+]
  - [1 \otimes h_{i 0}, [\Omega_+, \square (x^+_{i 0})]]
\\
  =\; & - (\alpha_i,\alpha_i) [1 \otimes x^+_{i 0}, \Omega_+]
  - [1\otimes h_{i 0}, x^+_{i 0}\otimes h_{i 0}]
  = - (\alpha_i,\alpha_i) [1 \otimes x^+_{i 0}, \Omega_+],
\end{align*}
where we have used \eqref{eq:hO} in the first equality and
\eqref{eq:xO} in the third. Therefore we have
\begin{equation}
   \Delta (x^+_{i 1}) = \square (x^+_{i 1}) - [1 \otimes x^+_{i 0}, \Omega_+]. \label{eq:DeltaX+}
\end{equation}
More explicitly 
\begin{equation*}
  \Delta (x^+_{i 1}) =
    x^+_{i 1}\otimes 1 + 1 \otimes x^+_{i 1}
    + h_{i 0} \otimes x^+_{i 0} - \sum_{\alpha\in \Delta_+}
    \sum_{k=1}^{\dim \g_\alpha} x_{-\alpha}^{(k)}\otimes [x^+_{i 0},x_{\alpha}^{(k)}].
\end{equation*}

Similarly we have
\begin{equation*}
  \begin{split}
   \Delta (x^-_{i 1}) &= \square (x^-_{i 1}) + [x^-_{i 0}\otimes 1, \Omega_+]
\\
   &= x^-_{i 1}\otimes 1 + 1\otimes x^-_{i 1}
   + x^-_{i 0} \otimes h_{i 0} + \sum_{\alpha\in\Delta_+}
   \sum_{k=1}^{\dim \g_\alpha} [x^-_{i 0},x_{-\alpha}^{(k)}]\otimes x_{\alpha}^{(k)}.
  \end{split}
\end{equation*}

\subsection{Proof of \thmref{thm:coproduct}, Part I}

We begin by checking that all the defining relations \eqref{eq:relHH'}-\eqref{eq:relDS'} except \eqref{eq:relHH'} when $r=1=s$ are compatible with $\Delta$. Computations for these relations work for any Kac-Moody algebra $\g$ satisfying the assumptions of \thmref{thm:deduction}.  We do not need to check the relations involving only $h_{i 0}$ or $x^\pm_{i 0}$. 

We first check \eqref{eq:relHH'} with $(r,s) = (0,1)$:
\begin{equation*}
  [\Delta(h_{i 0}), \Delta (\tilde h_{j 1})]
  = [\square(h_{i 0}), \square (\tilde h_{j 1}) + [h_{j 0}\otimes 1,\Omega_+] ]
  = [[\square (h_{i 0}), h_{j0}\otimes 1],\Omega_+]
  + [h_{j 0}\otimes 1, [\square (h_{i 0}), \Omega_+]].
\end{equation*}
This vanishes thanks to \eqref{eq:hO}.

As for \eqref{eq:relHX'}, we must verify that it is preserved when $i\neq j$ and $s=1$. We have
\begin{align*}
 [\Delta(h_{i0}),\Delta(x_{j1}^+)]= {} &[\square(h_{i0}),\square (x^+_{j1}) - [1 \otimes x^+_{j 0}, \Omega_+]] \quad \text{ by }\; \eqref{eq:DeltaX+}\\
                                  = {} &(\alpha_i,\alpha_j)\square(x_{j1}^+)-(\alpha_i,\alpha_j)[1\otimes x_{j0}^+,\Omega_+]-[1\otimes x_{j0}^+,[\square(h_{i0}),\Omega_+]] \quad \text{ by }\; \eqref{eq:relHX'}\\
                                  = {} &(\alpha_i,\alpha_j)\Delta(x^+_{j1}) \quad \text{ by }\; \eqref{eq:hO}\; \text{ and }\;\eqref{eq:DeltaX+}.
\end{align*}
The $\pm=-$ case is verified in the same way. 

Next let us check \eqref{eq:relXX'} with $(r,s) = (1,0)$:
\begin{align*}
[\Delta (x^+_{i 1}), \Delta (x^-_{j 0})]
  = {} & [\square (x^+_{i 1}) - [1\otimes x^+_{i 0}, \Omega_+], \square (x^-_{j 0})] \text{ by \eqref{eq:DeltaX+}}
\\
  = {} & \square ([x^+_{i 1}, x^-_{j 0}])
  - [[1\otimes x^+_{i 0}, \square (x^-_{j 0})], \Omega_+]
  - [1\otimes x^+_{i 0}, [\Omega_+, \square (x^-_{j 0})]]
\\
  = {} & \delta_{ij} \square (h_{i 1})
  - \delta_{ij} [1 \otimes h_{i 0},\Omega_+]
  + [1\otimes x^+_{i 0}, h_{j 0}\otimes x^-_{j 0}]
\\
  = {} & \delta_{ij} \left(\square (h_{i 1})
  + [h_{i 0}\otimes 1, \Omega_+]
  + h_{i 0} \otimes h_{i 0}\right) = \delta_{ij} \Delta (h_{i 1}),
\end{align*}
where we have used \eqref{eq:yO} in the third equality and
\eqref{eq:hO} in the fourth.

The relation \eqref{eq:relXX'} with $(r,s) = (0,1)$ can be checked in
a similar way.

Next we check \eqref{eq:relexHX2'}:
\begin{align*}
[\Delta (\tilde h_{i 1}), \Delta (x^\pm_{j 0})]
  = {} & [\square (\tilde h_{i 1}) + [h_{i 0}\otimes 1, \Omega_+],
  \square (x^\pm_{j 0})] \text{ by \eqref{Deltath}}
\\
  = {} &  \square ([\tilde h_{i 1}, x^\pm_{j 0}]) + 
  [[h_{i 0}\otimes 1, \square (x^\pm_{j 0})], \Omega_+]
  + [h_{i 0}\otimes 1, [\Omega_+, \square (x^\pm_{j 0})]]
\\
  = {} & \pm(\alpha_i, \alpha_j)\left( \square (x^\pm_{j 1})
  + [x^\pm_{j 0}\otimes 1, \Omega_+] \right)
  + [h_{i 0}\otimes 1, [\Omega_+, \square (x^\pm_{j 0})]].
\end{align*}
In the $+$ case, the above is  equal to
\begin{align*}
 (\alpha_i, \alpha_j) & \left( \square (x^+_{j 1})
  + [x^+_{j 0}\otimes 1, \Omega_+] \right)
  + [h_{i 0}\otimes 1, x_{j0}^+\otimes h_{j0}]
\\
= \; &
   (\alpha_i, \alpha_j)\left( \square (x^+_{j 1})
  + [x^+_{j 0}\otimes 1, \Omega_+] 
  + x_{j0}^+\otimes h_{j0}
   \right)
   = (\alpha_i, \alpha_j)(\square (x^+_{j 1}) - [1\otimes x^+_{j 0},\Omega_+]),
\end{align*}
thanks to \eqref{eq:xO}. By \eqref{eq:DeltaX+}, this is precisely $(\alpha_i,\alpha_j)\Delta(x^+_{j 1})$. The $-$ case can be proved in a
similar way. Thus, $\Delta$ preserves the relation \eqref{eq:relexHX2'}.

Let us check that \eqref{eq:relexXX'} is compatible with $\Delta$. We
have
\begin{align*}
[\Delta (x^+_{i 1}), \Delta (x^+_{j 0})]
    =\; & [\square (x^+_{i 1}) - [1\otimes x^+_{i 0}, \Omega_+], \square (x^+_{j 0})] \text{ by }\eqref{eq:DeltaX+}
\\
    =\; & \square ([x^+_{i 1}, x^+_{j 0}]) 
    - [[1\otimes x^+_{i 0}, \square (x^+_{j 0})], \Omega_+]
    - [1\otimes x^+_{i 0}, [\Omega_+,\square (x^+_{j 0})]]
\\
    =\; & \square ([x^+_{i 1}, x^+_{j 0}]) 
    - [1\otimes [x^+_{i 0}, x^+_{j 0}], \Omega_+]
    - [1\otimes x^+_{i 0}, x^+_{j 0} \otimes h_{j0}] \text{ by } \eqref{eq:xO} 
\\
    =\; & \square ([x^+_{i 1}, x^+_{j 0}])
    - [1\otimes [x^+_{i 0}, x^+_{j 0}], \Omega_+]
    + (\alpha_i,\alpha_j) x^+_{j 0} \otimes x^+_{i 0}.
\end{align*}
Exchanging  $i$ and $j$, we also obtain an expression for $[\Delta (x_{j1}^+),\Delta (x_{i0}^+)]$. Adding 
this to the above expression for $[\Delta (x^+_{i 1}), \Delta (x^+_{j 0})]$ yields 
\begin{equation}
   [\Delta (x^+_{i 1}), \Delta (x^+_{j 0})] + [\Delta (x^+_{j 1}), \Delta (x^+_{i 0})]
   = \square \left( [x^+_{i 1}, x^+_{j 0}]  - [x^+_{i 0}, x^+_{j 1}] \right)
   + (\alpha_i, \alpha_j)\left( x^+_{j 0} \otimes x^+_{i 0} + 
     x^+_{i 0} \otimes x^+_{j 0}\right). \label{xi1xj1}
\end{equation}
On the other hand, applying $\Delta$ to the right-hand side of
\eqref{eq:relexXX'}, we have
\begin{equation*}
  \frac{(\alpha_i,\alpha_j)}2 (\square (x^+_{i 0}) \square (x^+_{j 0}) + 
  \square (x^+_{j 0}) \square (x^+_{i 0}))
  = \frac{(\alpha_i,\alpha_j)}2 \left(
    \square \{ x^+_{i 0}, x^+_{j 0} \} + 2\left( x^+_{j 0} \otimes x^+_{i 0} + 
     x^+_{i 0} \otimes x^+_{j 0}\right)\right).
\end{equation*}
This is equal to \eqref{xi1xj1} thanks to \eqref{eq:relexXX'}. This proves the compatibility of $\Delta$ with \eqref{eq:relexXX'} when $\pm=+$.  The same proof works for the $-$ case.

\subsection{Proof of \thmref{thm:coproduct}, Part II}\label{sec:proofII}

It remains to verify that $\Delta$ preserves the relation $[\tilde h_{i1}, \tilde h_{j1}]=0$ for all $i,j\in I$. To accomplish this,  we will need to make use of the assumption that $\g$ is of finite or affine type. Since $\Delta(\tilde h_{k1})=\square(\tilde h_{k1})+[h_{k0}\otimes 1,\Omega_+]$ and $\square([\tilde h_{i1},\tilde h_{j1}])=0$, it suffices to show that 
\begin{equation}
 [[h_{i0}\otimes 1,\Omega_+],[h_{j0}\otimes 1,\Omega_+]]
 =[\square(\tilde h_{j1}),[h_{i0}\otimes1,\Omega_+]]-[\square(\tilde h_{i1}),[h_{j0}\otimes1,\Omega_+]]. \label{suff}
\end{equation}
The left-hand side is the sum over $k\in \mathbb{Z}_{>0}$ of 
\begin{align}
\begin{split}\label{LHS}
 \sum_{\mathrm{ht}(\alpha+\beta)=k}&(\alpha_i,\alpha)(\alpha_j,\beta)[x_\alpha^-\otimes x_\alpha^+,x_\beta^-\otimes x_\beta^+]\\
 &=\frac{1}{2}\sum_{\mathrm{ht}(\alpha+\beta)=k}(\alpha_i,\alpha)(\alpha_j,\beta)\left(\{x_\alpha^-,x_\beta^-\}\otimes [x_\alpha^+,x_\beta^+]+ [x_\alpha^-,x_\beta^-]\otimes \{x_\alpha^+,x_\beta^+\}\right),
 \end{split}
\end{align}
where the sum $\sum_{\mathrm{ht}(\alpha+\beta)=k}$ is taken over all $\alpha,\beta\in \Delta_+^{\mathrm{re}}$ such that $\mathrm{ht}(\alpha+\beta)=k$. The right-hand side of \eqref{suff} is the sum over $k\in \mathbb{Z}_{>0}$ of
 \begin{equation}\label{RHS}
  \sum_{\alpha\in \Delta_+^\mathrm{re}(k)}(\alpha,\alpha_j)[\square(\tilde h_{i1}),x_\alpha^-\otimes x_\alpha^+]-\sum_{\alpha\in \Delta_+^\mathrm{re}(k)}(\alpha,\alpha_i)[\square(\tilde h_{j1}),x_\alpha^-\otimes x_\alpha^+], 
 \end{equation}
 where $\Delta_+^{\mathrm{re}}(k)=\{\alpha\in \Delta_+^{\mathrm{re}}\,:\,\mathrm{ht}(\alpha)=k\}$. 
 Therefore, \eqref{suff} will hold if the following equality is established for all $k\in \Z_{>0}$:
 \begin{equation}
 \label{suff:2}
  \sum_{\mathrm{ht}(\alpha+\beta)=k}(\alpha_i,\alpha)(\alpha_j,\beta)\{x_\alpha^\mp,x_\beta^\mp\}\otimes [x_\alpha^\pm,x_\beta^\pm]
  =2\hspace{-.5em}\sum_{\alpha\in \Delta_+^{\mathrm{re}}(k)}[h_{ij}(\alpha),x_\alpha^\mp]\otimes x_\alpha^\pm,
 \end{equation}
where $h_{ij}(\alpha)=(\alpha,\alpha_j)\tilde h_{i1}-(\alpha,\alpha_i)\tilde h_{j1}$ for all $i,j\in I$ and $\alpha\in \Delta_+^{\mathrm{re}}$.

Since $J(h_k)=\tilde h_{k1}+\tilde v_{k}$, \propref{prop:Jroot} implies that 
\begin{equation}\label{hij=vij}
 [h_{ij}(\alpha),x_\alpha^\mp]
 =[(\alpha,\alpha_j)J(h_i)-(\alpha,\alpha_i)J(h_j),x_\alpha^\mp]-[v_{ij}(\alpha),x_\alpha^\mp]=[x_\alpha^\mp,v_{ij}(\alpha)],
\end{equation}
where $v_{ij}(\alpha)=(\alpha,\alpha_j)\tilde v_{i}-(\alpha,\alpha_i)\tilde v_{j}$.
We claim that 
\begin{equation}\label{Lt}
 [v_{ij}(\alpha),x_\alpha^\mp]= -\frac{1}{2}\sum_{\substack{\beta,\gamma\in \Delta_+^{\mathrm{re}}\\ \beta+\gamma=\alpha}}A_{ij,\beta,\gamma}(x_\alpha^\mp,[x_\beta^\pm,x_\gamma^\pm])x_\beta^\mp x_\gamma^\mp,
 \end{equation}
where $A_{ij,\beta,\gamma}=(\alpha_i,\beta)(\alpha_j,\gamma)-(\alpha_j,\beta)(\alpha_i,\gamma)$ for each $\beta,\gamma\in \Delta_+^{\mathrm{re}}$. We will prove \eqref{Lt} in the case where the symbols $\pm$ and $\mp$ take their upper values $+$ and $-$, respectively. The $(\pm,\mp)=(-,+)$ case follows from an essentially identical argument.

By definition of $\tilde v_i$, we have
\begin{equation*}
 [\tilde v_i,x_\alpha^-]=-\frac{1}{4}h^\vee(\alpha_i,\alpha)x_\alpha^-+\frac{1}{2}\sum_{\beta\in \Delta_+^{\mathrm{re}}}(\beta,\alpha_i)([x_\beta^-,x_\alpha^-]x_\beta^+ + x_\beta^-[x_\beta^+,x_\alpha^-]),
\end{equation*}
which implies the equality
\begin{equation}\label{Lt0}
 [v_{ij}(\alpha),x_\alpha^-]=\frac{1}{2}\sum_{\beta\in \Delta_+^{\mathrm{re}}}A_{ij,\beta,\alpha}([x_\beta^-,x_\alpha^-]x_\beta^+ + x_\beta^-[x_\beta^+,x_\alpha^-]).
\end{equation}
Consider $x_\beta^-[x_\beta^+,x_\alpha^-]$. Assume first that $\gamma=\beta-\alpha$ is positive. If $\gamma$ is an imaginary root, the coefficient $A_{ij,\beta,\alpha}$ vanishes. Therefore, we may assume $\gamma\in \Delta_+^{\mathrm{re}}$. Then
\begin{equation*}
 x_\beta^-[x_\beta^+,x_\alpha^-]=(x_\gamma^-,[x_\beta^+,x_\alpha^-])x_\beta^-x_\gamma^+.
\end{equation*}
If $\beta-\alpha$ is negative, we set instead $\gamma=\alpha-\beta$ (which we may again assume to be a real root) and deduce that 
\begin{equation*}
 x_\beta^-[x_\beta^+,x_\alpha^-]=(x_\gamma^+,[x_\beta^+,x_\alpha^-])x_\beta^-x_\gamma^{-}=-(x_\alpha^-,[x_\beta^+,x_\gamma^+])x_\beta^-x_\gamma^{-}.
\end{equation*}
We thus have 
\begin{equation}
 \label{Lt1}
\sum_{\beta\in \Delta_+^{\mathrm{re}}}A_{ij,\beta,\alpha}x_\beta^-[x_\beta^+,x_\alpha^-]
=\sum_{\substack{\beta,\gamma\in \Delta_+^{\mathrm{re}}\\\gamma+\alpha=\beta }}A_{ij,\gamma,\beta}(x_\gamma^-,[x_\beta^+,x_\alpha^-])x_\beta^-x_\gamma^+
-\sum_{\substack{\beta,\gamma\in \Delta_+^{\mathrm{re}}\\ \beta+\gamma=\alpha}}A_{ij,\beta,\gamma}(x_\alpha^-,[x_\beta^+,x_\gamma^+])x_\beta^-x_\gamma^{-},
\end{equation}
where we have used that $A_{ij,\beta,\beta-\gamma}=A_{ij,\gamma,\beta}$ and $A_{ij,\beta,\beta+\gamma}=A_{ij,\beta,\gamma}$. By similar reasoning, 
\begin{equation*}
 \sum_{\beta\in \Delta_+^{\mathrm{re}}}A_{ij,\beta,\alpha}[x_\beta^-,x_\alpha^-]x_\beta^+=\sum_{\substack{\beta,\gamma\in \Delta_+^{\mathrm{re}}\\\beta+\alpha=\gamma}}A_{ij,\beta,\gamma}(x_\gamma^+,[x_\beta^-,x_\alpha^-])x_\gamma^-x_\beta^+
 =-\sum_{\substack{\beta,\gamma\in \Delta_+^{\mathrm{re}}\\\beta+\alpha=\gamma}}A_{ij,\gamma,\beta}(x_\gamma^-,[x_\beta^+,x_\alpha^-])x_\beta^-x_\gamma^+,
\end{equation*}
which cancels with the first term on the right-hand side of \eqref{Lt1}. This proves that the formula \eqref{Lt} holds when $(\pm,\mp)=(+,-)$.

Combining \eqref{hij=vij} with \eqref{Lt}, we obtain   
\begin{equation*}
 2[h_{ij}(\alpha),x_\alpha^\mp]\otimes x_\alpha^\pm
 =\sum_{\substack{\beta,\gamma\in \Delta_+^{\mathrm{re}}\\ \beta+\gamma=\alpha}}A_{ij,\beta,\gamma}(x_\alpha^\mp,[x_\beta^\pm,x_\gamma^\pm])x_\beta^\mp x_\gamma^\mp \otimes x_\alpha^\pm
 =\sum_{\substack{\beta,\gamma\in \Delta_+^{\mathrm{re}}\\ \beta+\gamma=\alpha}}A_{ij,\beta,\gamma}x_\beta^\mp x_\gamma^\mp \otimes [x_\beta^\pm,x_\gamma^\pm].
\end{equation*}
After adding the last expression to itself with $\beta$ and $\gamma$ exchanged, dividing by two, and then summing over $\alpha\in \Delta_+^{\mathrm{re}}(k)$, we find the following expression for the right-hand side of \eqref{suff:2}:
\begin{equation}\label{R->L}
 2\hspace{-.5em}\sum_{\alpha\in \Delta_+^\mathrm{re}(k)}[h_{ij}(\alpha),x_\alpha^\mp]\otimes x_\alpha^\pm=\frac{1}{2}\sum_{\alpha\in \Delta_+^\mathrm{re}(k)}\sum_{\substack{\beta,\gamma\in \Delta_+^{\mathrm{re}}\\ \beta+\gamma=\alpha}}A_{ij,\beta,\gamma}\{x_\beta^\mp,x_\gamma^\mp\} \otimes [x_\beta^\pm,x_\gamma^\pm].
\end{equation}
Conversely, adding the left-hand side of \eqref{suff:2} to itself with $\alpha$ and $\beta$ exchanged and dividing by two, we deduce that it is equal to
\begin{equation*}
\frac{1}{2}\sum_{\mathrm{ht}(\alpha+\beta)=k}A_{ij,\alpha,\beta}\{x_\alpha^\mp,x_\beta^\mp\}\otimes [x_\alpha^\pm,x_\beta^\pm].
\end{equation*}
Since $A_{ij,\alpha,\beta}=0$ for $\alpha+\beta\in\Delta_+^{\mathrm{im}}$, this coincides with the right-hand side of \eqref{R->L}. This completes the proof of the identity \eqref{suff:2}, and hence the proof of \thmref{thm:coproduct}.

\subsection{Coassociativity}
It follows from \thmref{thm:coproduct} that we can turn the tensor product $V_1 \otimes V_2$ of two representations in the category $\mathscr{O}$ into a representation of $Y(\g)$ which is also in the category $\mathscr{O}$. Indeed, a $Y(\g)$-module $V$ is in $\mathscr{O}$ if $\iota^*(V)$ is in the category $\mathscr{O}$ for $\g$; moreover, $\iota^*(V_1 \otimes V_2) = \iota^*(V_1) \otimes \iota^*(V_2)$. It is very desirable for this coproduct to be compatible with the associativity of the tensor product. 

\begin{Proposition}\label{Prop:mon}
Let $V_1,V_2$ and $V_3$ be $Y(\g)$-modules in the category $\mathscr O$. Then the natural isomorphism of vector spaces 
\begin{equation*}
a_{V_1,V_2,V_3}\colon (V_1\otimes V_2)\otimes V_3 \to V_1\otimes (V_2\otimes V_3)
 \end{equation*}
is an isomorphism of $Y(\g)$-modules. 
\end{Proposition}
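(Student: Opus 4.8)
The plan is to reduce the proposition to coassociativity of $\Delta$, checked on a set of algebra generators of $Y(\g)$. Recall, from the discussion following \defref{def:Yangian} (see also \thmref{thm:deduction}), that $Y(\g)$ is generated as an algebra by $\h$ together with the elements $x^\pm_{i0}$ and $h_{i1}$, $i\in I$. Since the category $\mathscr O$ is closed under tensor products (this is routine to verify from the definition), \thmref{thm:coproduct} shows that each of
\[
 \Delta_{V_1\otimes V_2,\,V_3}\colon Y(\g)\longrightarrow \End_\C\bigl((V_1\otimes V_2)\otimes V_3\bigr),
 \qquad
 \Delta_{V_1,\,V_2\otimes V_3}\colon Y(\g)\longrightarrow \End_\C\bigl(V_1\otimes(V_2\otimes V_3)\bigr)
\]
is an algebra homomorphism. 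As $a_{V_1,V_2,V_3}$ is a linear isomorphism, it therefore suffices to verify that it intertwines these two homomorphisms on the generators $h\in\h$, $x^\pm_{i0}$ and $h_{i1}$.

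For $h\in\h$ and for $x^\pm_{i0}$ the assigned operator is $\square$, and here the claim is immediate: under the identification $a_{V_1,V_2,V_3}$, both $\Delta_{V_1\otimes V_2,V_3}(X)$ and $\Delta_{V_1,V_2\otimes V_3}(X)$ equal $X\otimes 1\otimes 1 + 1\otimes X\otimes 1 + 1\otimes 1\otimes X$. It remains to handle the generators $h_{i1}$.

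For $h_{i1}$ the key observation is that the $\g$-module structure on a tensor product of two modules in $\mathscr O$ is the standard one, given by $\square$. Consequently, the half-Casimir operator \eqref{HalfCas} associated to $(V_1\otimes V_2)\otimes V_3$ equals $\Omega_+^{13}+\Omega_+^{23}$, and the one associated to $V_1\otimes(V_2\otimes V_3)$ equals $\Omega_+^{12}+\Omega_+^{13}$, where $\Omega_+^{kl}$ denotes \eqref{HalfCas} acting on the $k$-th and $l$-th tensor factors. Substituting these, together with the formula for $\Delta_{V_1,V_2}(h_{i1})$ from \defref{def:Delta}, into the defining formula for $\Delta_{V_1\otimes V_2,V_3}(h_{i1})$, and using that operators acting on disjoint tensor factors commute (so $[h_{i0}^{(1)},\Omega_+^{23}]=0=[h_{i0}^{(2)},\Omega_+^{13}]$, where $X^{(k)}$ denotes $X$ acting on the $k$-th factor), one computes that both $\Delta_{V_1\otimes V_2,V_3}(h_{i1})$ and $\Delta_{V_1,V_2\otimes V_3}(h_{i1})$ are equal, as operators on $V_1\otimes V_2\otimes V_3$ (identified via $a_{V_1,V_2,V_3}$), to
\[
 \sum_{k=1}^{3} h_{i1}^{(k)}
 + \sum_{1\le k<l\le 3} h_{i0}^{(k)}h_{i0}^{(l)}
 + \bigl[h_{i0}^{(1)},\,\Omega_+^{12}+\Omega_+^{13}\bigr]
 + \bigl[h_{i0}^{(2)},\,\Omega_+^{23}\bigr].
\]
Hence $a_{V_1,V_2,V_3}$ intertwines the $h_{i1}$-actions as well, and the proposition follows.

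The displayed computation is essentially bookkeeping of tensor positions; the one point worth emphasising is that each occurrence of $\Omega_+$ in an iterated coproduct refers to the tensor-product $\g$-action, which is exactly why it splits as a sum of two-factor half-Casimirs and why the cross-terms vanish. One may alternatively run the verification using $\tilde h_{i1}$ and \eqref{Deltath}, or $J(h_i)$ and \eqref{eq:coproJ}, arriving at the same identity.
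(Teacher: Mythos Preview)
Your proof is correct and follows essentially the same approach as the paper: reduce to a generating set and verify coassociativity on the nontrivial generator, with the paper choosing $\tilde h_{i1}$ and writing out the root-vector sums explicitly while you use $h_{i1}$ and the more compact $\Omega_+^{kl}$ notation. The computations are equivalent, and you even note the $\tilde h_{i1}$ alternative at the end.
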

\begin{proof}
We need to show that, after identifying the spaces $\End_\C((V_1\otimes V_2)\otimes V_3)$ and $\End_\C(V_1\otimes (V_2\otimes V_3))$ (via $a_{V_1,V_2,V_3}$), we have 
 $\Delta_{V_1\otimes V_2,V_3}=\Delta_{V_1,V_2\otimes V_3}$. Since $Y(\g)$ is generated by $\g$ and $\tilde h_{i1}$ (for all $i\in I$), we need only to establish this equality when both sides are applied to $\tilde h_{i1}$. By \eqref{Deltath}, we have 
\begin{align*}  
   \Delta_{V_1\otimes V_2,V_3}(\tilde h_{i1})=&(\tilde h_{i1}\otimes 1)\otimes 1 +(1\otimes 1)\otimes \tilde h_{i1} +(1\otimes \tilde h_{i1})\otimes 1 \\
                                       &-\sum_{\alpha\in \Delta_+^\re}(\alpha_i,\alpha)\left((x_\alpha^-\otimes x_\alpha^+)\otimes 1 +(x_\alpha^-\otimes 1 +1\otimes x_\alpha^-)\otimes x_\alpha^+ \right),\\
  \Delta_{V_1,V_2\otimes V_3}(\tilde h_{i1})=& \tilde h_{i1}\otimes (1\otimes 1)+ 1\otimes (\tilde h_{i1}\otimes 1)+   1\otimes (1\otimes \tilde h_{i1})\\
                                       &-\sum_{\alpha\in \Delta_+^\re}(\alpha_i,\alpha)\left(1\otimes (x_\alpha^-\otimes x_\alpha^+)+x_\alpha^-\otimes (x_\alpha^+\otimes 1+1\otimes x_\alpha^+) \right).
 \end{align*}
Hence, $\Delta_{V_1\otimes V_2,V_3}(\tilde h_{i1})=\Delta_{V_1,V_2\otimes V_3}(\tilde h_{i1})$ and consequently $a_{V_1,V_2,V_3}$ is an isomorphism of $Y(\g)$-modules. \end{proof}

\section{Coproduct and completions of Yangians}\label{Sec:comp}

The collection of algebra homomorphisms $\Delta_{V_1,V_2}$, which are defined on generators by \eqref{eq:assign}, can be viewed together as a sort of comultiplication on $Y(\g)$ which is coassociative in the sense of \propref{Prop:mon}. Our present goal is to improve on this by showing that each homomorphism $\Delta_{V_1,V_2}\colon Y(\g)\to \End_\C(V_1\otimes V_2)$ can be recovered from a single homomorphism $\Delta\colon Y(\g)\to Y(\g)\widehat\otimes Y(\g)$, where $Y(\g)\widehat\otimes Y(\g)$ is a suitable completion of $Y(\g)\otimes Y(\g)$.

Our first step is to define a completion $\widehat Y(\g)$ of $Y(\g)$ which behaves nicely with respect to modules in the category $\mathscr O$, and from which the definition of $Y(\g)\widehat\otimes Y(\g)$ can be obtained as a special case. 
\subsection{The completion $\widehat Y(\g)$}
Let $\g$ be a symmetrizable Kac-Moody algebra as in \secref{sec:YangianKM}, except that we no longer require the Cartan matrix $(a_{ij})_{i,j\in I}$ to be indecomposable (because we want to consider the Yangian of $\g \oplus \g$). Note that in this case we can still define the Yangian $Y_\hbar(\g)$, and thus $Y(\g)$, using \defref{def:Yangian}. For the purpose of introducing the completion $\widehat Y(\g)$ we need to impose two mild conditions on $Y(\g)$:
\begin{enumerate}[(A)]
\item \label{as.1} We suppose that $Y(\g)$ admits the multiplicative triangular decomposition 
              \begin{equation*}
               Y(\g)\cong Y^- \otimes Y^0 \otimes Y^+,
              \end{equation*}
 where $Y^\pm$ (resp. $Y^0$) denotes the subalgebra of $Y(\g)$ generated $x_{ir}^\pm$ (resp. $h_{ir}$ and $h\in \h$) with $i\in I$ and $r\ge 0$. 
 \item  We also assume that $Y^\pm$ is isomorphic to the quotient of the free algebra on the generators $x_{i,r}^{\pm}$ for all $i\in I, \, r\ge 0$ by 
the ideal corresponding to the relations \eqref{eq:relexXX} and \eqref{eq:relDS}. \label{as.2}
\end{enumerate}
It is very plausible that these assumptions on $Y(\g)$ are always satisfied (even when $\g$ is not affine). Indeed, for affinizations of quantum Kac-Moody algebras
such a result was obtained by D. Hernandez in \cite[Theorem 3.2]{He1}, and the corresponding result for Yangians could most likely be proven using 
exactly the same technique.

Set $\deg x_{ir}^+=1$ for all $i\in I$ and $r\geq 0$. The assumption \eqref{as.2} implies that we have $Y^+=\bigoplus_{k=0}^\infty Y^+[k]$, where $Y^+[k]$ is the span of all monomials of degree $k$ in $Y^+$, and this grading is compatible with the algebra structure on $Y^+$. This together with the assumption \eqref{as.1} imply that we have the vector space grading
\begin{equation*}
 Y(\g)= \bigoplus_{k=0}^\infty Y(\g)[k], \quad \text{where } Y(\g)[k] = Y^{\leq 0}\otimes Y^+[k]
\end{equation*}
and $Y^{\le 0}$ is the subalgebra of $Y(\g)$ generated by $x_{ir}^-$ and $h_{ir}$ for all $i\in I, r\ge 0$ along with all $h \in \h$. Note that $Y(\g)= \bigoplus_{k=0}^\infty Y(\g)[k]$ is not a grading of algebras. 

For each $n\in \mathbb{Z}_{\ge 0}$, let $Y_{\ge n}$ denote the subspace $\oplus_{k=n}^\infty Y^+[k]$ of $Y^+$, and let $(A_{n},\mathsf{q}_{n})$ consist of the (left) $Y(\g)$-module $A_{n}$ and natural quotient map $\mathsf{q}_{n}$ which are given by
\begin{equation*}
 A_{n}=Y(\g)/Y(\g)Y_{\ge n+1},\quad \mathsf{q}_{n}\colon Y(\g)\to A_{n}.  
\end{equation*}
For each $n\geq 0$, $\mathsf{q}_{n-1}$ factors through $A_{n}$ to yield a $Y(\g)$-module homomorphism $\mathsf{p}_{n}:A_{n}\to A_{n-1}$ such that $\mathsf{p}_{n}\circ \mathsf{q}_n=\mathsf{q}_{n-1}$. Therefore, $(A_n,\mathsf{p}_n)_{n\geq 0}$ forms an inverse system of $Y(\g)$-modules. Following \cite[10.1.D]{CP-book}, we introduce $\widehat Y(\g)$ as the inverse limit of this system:
\begin{Definition}\label{def:hatY}
 We define $\widehat Y(\g)$ to be the $Y(\g)$-module obtained by taking the inverse limit of the system $(A_n,\mathsf{p}_n)_{n\geq 0}$:
 \begin{equation}
  \widehat Y(\g)=\varprojlim_n A_n=\varprojlim_n (Y(\g)/Y(\g)Y_{\ge (n+1)}). \label{hatY}
 \end{equation}
\end{Definition}
Let $\mathfrak{i}\colon Y(\g)\to \widehat Y(\g)$ be the homomorphism (of $Y(\g)$-modules) given by $X\mapsto (\mathsf{q}_n(X))_{n\geq 0}$ for all $X\in Y(\g)$. Note that $\mathfrak{i}$ is injective: if $X\in \Ker(\mathfrak{i})$
then $X\in \cap_{n\geq 0}Y(\g)Y^+_{n+1} = \{ 0 \}$. 

The next lemma gives a more familiar presentation of $\widehat Y(\g)$. 
\begin{Lemma}\label{L:IL}
 The embedding $\mathfrak{i}$ extends to a linear isomorphism
 \begin{equation}
 \Phi\colon \prod_{k=0}^\infty Y(\g)[k]\to \widehat Y(\g),\quad \sum_{k=0}^\infty X_k\mapsto \left(\sum_{k=0}^n\mathsf{q}_n(X_k) \right)_{n\geq 0}.
 \end{equation} 
\end{Lemma}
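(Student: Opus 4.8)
The plan is to make the quotients $A_n$ and the connecting maps $\mathsf p_n$ completely explicit, using the two assumptions imposed on $Y(\g)$, and then to recognize $\widehat Y(\g)$ as an inverse limit of truncation maps, which is tautologically the product $\prod_{k=0}^{\infty}Y(\g)[k]$. The whole statement then falls out by unwinding definitions.

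The first and essentially only nontrivial step is the identity
\[
  Y(\g)\,Y_{\ge n+1}=\bigoplus_{k\ge n+1}Y(\g)[k]\qquad(n\ge 0).
\]
By the multiplicative triangular decomposition we have $Y(\g)\cong Y^{\le 0}\otimes Y^+$ with $Y^{\le 0}=Y^-\otimes Y^0$ — this is exactly what is built into the established grading $Y(\g)=\bigoplus_{k\ge 0}Y^{\le 0}\otimes Y^+[k]$ — so every element of $Y(\g)$ is a finite sum of products $ab$ with $a\in Y^{\le 0}$, $b\in Y^+$, and hence $Y(\g)\,Y_{\ge n+1}=Y^{\le 0}\bigl(Y^+Y_{\ge n+1}\bigr)$. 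Since $Y^+=\bigoplus_{k\ge 0}Y^+[k]$ is a grading of algebras and $1\in Y^+[0]$, we have $Y^+[j]\,Y_{\ge n+1}\subseteq Y_{\ge n+1+j}\subseteq Y_{\ge n+1}$ together with $Y^+[0]\,Y_{\ge n+1}=Y_{\ge n+1}$, whence $Y^+Y_{\ge n+1}=Y_{\ge n+1}=\bigoplus_{k\ge n+1}Y^+[k]$. Multiplying on the left by $Y^{\le 0}$ and using once more that the sum $\bigoplus_k Y^{\le 0}\otimes Y^+[k]$ is direct yields the displayed identity. In particular $\mathsf q_n$ identifies $A_n=Y(\g)/Y(\g)Y_{\ge n+1}$ with $\bigoplus_{k=0}^{n}Y(\g)[k]$, and under this identification $\mathsf q_n$ becomes the projection discarding all homogeneous components of degree $>n$.

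With this in hand the connecting maps are transparent: after the identifications above, $\mathsf p_n\colon A_n\to A_{n-1}$ is the projection $\bigoplus_{k=0}^{n}Y(\g)[k]\twoheadrightarrow\bigoplus_{k=0}^{n-1}Y(\g)[k]$ that forgets the degree-$n$ component. An element of $\varprojlim_n A_n$ is therefore a compatible family $(a_n)_{n\ge 0}$ with $a_n=\sum_{k=0}^{n}a_{n,k}$, $a_{n,k}\in Y(\g)[k]$, and $a_{n,k}=a_{m,k}$ whenever $k\le\min(m,n)$; such a family is the same datum as an arbitrary sequence $(X_k)_{k\ge 0}\in\prod_{k=0}^{\infty}Y(\g)[k]$ via $X_k:=a_{k,k}$, and conversely, and one checks directly that this mutually inverse pair of maps is precisely $\Phi$ and its inverse. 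Concretely, $\Phi$ is injective because $\Phi\bigl((X_k)_k\bigr)=0$ forces $\sum_{k=0}^{n}X_k\in Y(\g)Y_{\ge n+1}=\bigoplus_{k\ge n+1}Y(\g)[k]$ for every $n$, which by directness of the grading gives $X_k=0$ for all $k$; surjectivity is the reconstruction just described; and linearity is clear from the formula. Finally $\Phi$ extends $\iota$: for $X=\sum_{k=0}^{N}X_k$ with $X_k\in Y(\g)[k]$ one has $\mathsf q_n(X)=\sum_{k=0}^{n}\mathsf q_n(X_k)$ for every $n$ (the terms with $n<k\le N$ vanish in $A_n$, those with $k>N$ vanish already), which is exactly $\Phi$ applied to the image of $X$ in the subspace $\bigoplus_k Y(\g)[k]\subseteq\prod_k Y(\g)[k]$.

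I expect the only genuine obstacle to be the identity $Y(\g)Y_{\ge n+1}=\bigoplus_{k\ge n+1}Y(\g)[k]$: one must be careful that relating the \emph{left} $Y(\g)$-submodule generated by $Y_{\ge n+1}$ to the vector-space grading $Y(\g)=\bigoplus_k Y^{\le 0}\otimes Y^+[k]$ uses \emph{both} hypotheses essentially — the triangular decomposition to factor arbitrary elements as $Y^{\le 0}\cdot Y^+$ and to keep the grading a direct sum, and the algebra grading on $Y^+$ to compute $Y^+\cdot Y_{\ge n+1}$. Once this is secured, the rest is the routine identification of an inverse limit of truncation maps with a direct product.
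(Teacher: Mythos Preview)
Your argument is correct. The paper does not give a proof of this lemma at all; it is stated and then immediately used to identify $\widehat Y(\g)$ with $\prod_{k\ge 0}Y(\g)[k]$. Your write-up supplies exactly the details a reader would fill in, and the crucial identity $Y(\g)Y_{\ge n+1}=\bigoplus_{k\ge n+1}Y(\g)[k]$ is established cleanly using both hypotheses \ref{as.1} and \eqref{as.2} in the way you describe.
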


Henceforth, we will always identify $\widehat Y(\g)$ and $\prod_{k=0}^\infty Y(\g)[k]$, and  we shall especially view the elements of $\widehat Y(\g)$ as infinite series
$\sum_{k=0}^\infty X_k$ with $X_k\in Y(\g)[k]$ for all $k\geq 0$. 

The main goal for the rest of this section is to prove that $\widehat Y(\g)$ can be naturally made into a $\C$-algebra with structure compatible 
with that of $Y(\g)$. We begin by naively defining what the multiplication should be.  

Given $X^\circ=\sum_{k=0}^\infty X_k^\circ$ and 
$X^\bullet=\sum_{\ell=0}^\infty X_{\ell}^\bullet$ in $\widehat{Y}(\g)$, define 
\begin{equation}
 X^\circ \cdot X^\bullet=\sum_{m=0}^\infty (X^\circ X^\bullet)_m, \label{prod}
\end{equation}
where $(X^\circ X^\bullet)_m = \sum_{k,\ell=0}^{\infty} (X^\circ_k X^\bullet_{\ell})_{m}$  and $(X^\circ_k X^\bullet_{\ell})_m$ is the component of the product $X^\circ_k X^\bullet_{\ell}$ which belongs to $Y(\g)[m]$ (note that the product $X^\circ_k X^\bullet_{\ell}$ is inside $Y(\g)$). 
To see that the right-hand side of \eqref{prod} is a well-defined element of $\widehat{Y}(\g)$, we have to show that $\sum_{k,\ell=0}^{\infty} (X^\circ_k X^\bullet_{\ell})_m$ reduces to a finite sum. This will be established in the proof of \propref{prop:prodwhY}, however first we will need \propref{prop:NmY} below whose proof depends on the next lemma. 
\begin{Lemma}\label{lem:XYz}
 For each $k\geq 0$, $i\in I$ and $r\geq 0$ we have the inclusions
 \begin{gather}
  Y^+[k]h_{ir}\subset Y(\g)[k], \label{L:1.1} \\
  Y^+[k]x_{ir}^-\subset Y(\g)[k]\oplus Y(\g)[k-1]. \label{L:2.1}
 \end{gather}
\end{Lemma}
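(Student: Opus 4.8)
The two inclusions are both statements about how the $Y^+$-degree of an element changes when a factor from $Y^0$, respectively a generator $x^-_{ir}$, is straightened to the left of a monomial in the $x^+_{js}$. The plan is to treat \eqref{L:1.1} first and then deduce \eqref{L:2.1} from it. Throughout I will use that $Y^+=\bigoplus_k Y^+[k]$ is a \emph{graded} algebra (assumption \ref{as.2}), so that $Y^+[k]$ is spanned by the monomials $x^+_{j_1 s_1}\cdots x^+_{j_k s_k}$ and $Y^+[k]\,Y^+[\ell]\subseteq Y^+[k+\ell]$, together with the fact that $Y^0$ is a subalgebra of $Y^{\le 0}$ containing $1$, so that $Y^0\,Y^+[k]\subseteq Y^{\le 0}\,Y^+[k]=Y(\g)[k]$. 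In particular, for \eqref{L:1.1} it suffices to prove the sharper assertion
\[
 Y^+[k]\cdot Y^0\subseteq Y^0\cdot Y^+[k]\qquad\text{for all }k\ge 0 .
\]

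I would establish this in three steps. First, I would show $x^+_{js}h_{ir}\in Y^0\,Y^+[1]$ for all $i,j,s$ and all $r\ge 0$, by induction on $r$: the case $r=0$ is immediate from \eqref{eq:relHX}, and for the inductive step one rewrites \eqref{eq:relexHX} as
\[
 x^+_{js}h_{i,r+1}=h_{i,r+1}x^+_{js}-h_{ir}x^+_{j,s+1}+x^+_{j,s+1}h_{ir}-\tfrac{(\alpha_i,\alpha_j)}2 h_{ir}x^+_{js}-\tfrac{(\alpha_i,\alpha_j)}2 x^+_{js}h_{ir},
\]
where every term is either of the shape $h_{\bullet\bullet}x^+_{\bullet\bullet}\in Y^0 Y^+[1]$ or of the shape $x^+_{\bullet\bullet}h_{i,r}$, hence in $Y^0 Y^+[1]$ by the inductive hypothesis; the analogous (trivial) statement $x^+_{js}h\in Y^0 Y^+[1]$ for $h\in\h$ follows from \eqref{eq:Cartan}. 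Second, a short induction on the word length of an element of $Y^0$ in the generators $h_{ir}$, $h\in\h$, using the first step, upgrades this to $Y^+[1]\cdot Y^0\subseteq Y^0\,Y^+[1]$. Third, I would prove $Y^+[k]\cdot Y^0\subseteq Y^0\,Y^+[k]$ by induction on $k$: writing a spanning monomial of $Y^+[k]$ as $x^+_{js}u$ with $u\in Y^+[k-1]$ and applying the inductive hypothesis to $u\,Y^0$, one is left with expressions $x^+_{js}\cdot(\text{element of }Y^0)\cdot(\text{element of }Y^+[k-1])$, and the middle $Y^0$-factor can be pulled past $x^+_{js}$ by the second step, placing the result in $Y^0\,Y^+[1]\,Y^+[k-1]\subseteq Y^0\,Y^+[k]$. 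This proves \eqref{L:1.1}.

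For \eqref{L:2.1} I would straighten $x^-_{ir}$ to the left through a monomial using \eqref{eq:relXX} alone: an easy induction on $k$ gives, for $v=x^+_{j_1 s_1}\cdots x^+_{j_k s_k}$,
\[
 v\,x^-_{ir}=x^-_{ir}\,v+\sum_{m=1}^{k}\delta_{i j_m}\;x^+_{j_1 s_1}\cdots x^+_{j_{m-1}s_{m-1}}\;h_{i,\,s_m+r}\;x^+_{j_{m+1}s_{m+1}}\cdots x^+_{j_k s_k}.
\]
Here $x^-_{ir}v\in Y^{\le 0}\,Y^+[k]=Y(\g)[k]$, and in the $m$-th summand \eqref{L:1.1} lets me move $h_{i,s_m+r}$ to the left of $x^+_{j_1 s_1}\cdots x^+_{j_{m-1}s_{m-1}}$, placing that term in $Y^0\,Y^+[m-1]\,Y^+[k-m]\subseteq Y^0\,Y^+[k-1]\subseteq Y(\g)[k-1]$. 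The essential point is that there is only one $x^-_{ir}$, so in each term of the expansion it is absorbed at most once and the $Y^+$-degree therefore drops by at most one; summing over a monomial basis of $Y^+[k]$ gives $Y^+[k]x^-_{ir}\subseteq Y(\g)[k]+Y(\g)[k-1]$, which is a direct sum because $Y(\g)=\bigoplus_\ell Y(\g)[\ell]$.

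I expect the genuine obstacle to be the third step of the first part — equivalently, the claim that straightening a $Y^0$-factor past the $x^+$'s never produces an $x^-$ and hence preserves the $Y^+$-degree. Since $[h_{ir},x^+_{js}]$ is not merely a scalar multiple of an $x^+$-generator once $r\ge 1$, this forces the nested induction (on $r$, then on word length, then on $k$) sketched above, and one must be careful at each stage to keep the intermediate expressions inside $Y^0\,Y^+$ rather than inside the larger space $Y^{\le 0}\,Y^+$. Once \eqref{L:1.1} is in hand, the derivation of \eqref{L:2.1} is routine straightening.
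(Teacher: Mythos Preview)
Your proof is correct and follows essentially the same route as the paper's: both arguments straighten $h_{ir}$ past a monomial in the $x^+_{js}$ by induction using the relation \eqref{eq:relexHX}, and then handle \eqref{L:2.1} by commuting $x^-_{ir}$ past the monomial via \eqref{eq:relXX} and invoking \eqref{L:1.1} on the resulting $h$-terms. The only differences are organizational: you prove the slightly sharper inclusion $Y^+[k]\,Y^0\subseteq Y^0\,Y^+[k]$ and arrange the argument as three nested inductions, whereas the paper runs a single double induction on $(k,r)$ and lands directly in $Y(\g)[k]$; for \eqref{L:2.1} you spell out the explicit commutator expansion, while the paper simply says ``similarly''.
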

\begin{proof}
Let's prove that the first inclusion holds for all $r\geq 0$ by induction on $k$. When $k=0$, \eqref{L:1.1} is true since $Y^+[0]=\mathbb{C}$. Assume now that, for a fixed $\ell>0$, \eqref{L:1.1} holds when $k<\ell$. Let $X=x_{i_1,r_1}^+\cdots x_{i_\ell,r_\ell}^+$ be a monomial in $Y^+[\ell]$ and write  $X=X_1 x_{i_\ell,r_\ell}^+$. We prove that $Xh_{ir} \subset Y(\g)[\ell]$ by induction on $r$. The case $r=0$ follows from \eqref{eq:relHX}. When $r>0$, we have 
 \begin{align*}
 Xh_{ir}&=X_1h_{ir}x_{i_\ell,r_\ell}^++X_1[x_{i_\ell,r_\ell}^+,h_{ir}]\\
           &=X_1h_{ir}x_{i_\ell,r_\ell}^+-X_1\left(\frac{(\alpha_i,\alpha_{i_\ell})}{2}(h_{i,r-1}x_{i_\ell,r_\ell}^++x_{i_\ell,r_\ell}^+h_{i,r-1})+(h_{i,r-1}x_{i_\ell,r_\ell+1}^+-x_{i_\ell,r_\ell+1}^+h_{i,r-1}) \right).
 \end{align*}
Since $X_1$ has length $\ell-1$, $X_1h_{ir}x_{i_\ell,r_\ell}^+$, $X_1h_{i,r-1}x_{i_\ell,r_\ell}^+$ and $X_1h_{i,r-1}x_{i_\ell,r_\ell+1}^+$ all belong to $Y(\g)[\ell]$, and, by induction on $r$, the rest of the terms on the right-hand side of the above expression also belong to $Y(\g)[\ell]$. Hence, \eqref{L:1.1} holds for all $k,r\geq 0$.

The inclusion \eqref{L:2.1} can be proved similarly using induction on $k$ and \eqref{L:1.1}.
\end{proof}

Note that, since $Y^+[k]h\subset Y(\g)[k]$ for all $h\in \h$ and $k\geq 0$, the relation \eqref{L:1.1} of \lemref{lem:XYz} implies that $Y^+[k]\cdot Y^0\subset Y(\g)[k]$ for all $k\geq 0$. We shall use this fact in the next Proposition.
\begin{Proposition}\label{prop:NmY}
Let $Z\in Y^{\le 0}$. Then, for every non-negative integer $m\geq 0$  
 there exists $N_{m}^{Z}\geq 0$ such that 
 \begin{equation}
  [Y^+[k],Z]\in \bigoplus_{a=m+1}^k  Y(\g)[a] \quad \text{ for all }\; k\geq N_m^{Z}.
 \end{equation}
\end{Proposition}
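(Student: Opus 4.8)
The approach is to combine the root-lattice grading on $Y(\g)$ with \lemref{lem:XYz}. Recall that $Y(\g)$ is graded by the root lattice $Q=\bigoplus_{i\in I}\Z\alpha_i$, with $x_{ir}^\pm$ of weight $\pm\alpha_i$ and $h_{ir}$, $h\in\h$ of weight $0$: every relation in \defref{def:Yangian} is homogeneous for this grading. Since a degree-$k$ monomial in the $x_{ir}^+$ has weight of height exactly $k$, we have $Y^+[k]=\bigoplus_{\mathrm{ht}\,\mu=k}Y^+[k]_\mu$; likewise $Y^{\le 0}=\bigoplus_{\beta\in Q_+}Y^{\le 0}_{-\beta}$, and because the multiplication map $Y^{\le 0}\otimes_{\C}Y^+[a]\to Y(\g)$ is injective and respects weights (Assumption \ref{as.1}), each subspace $Y(\g)[a]=Y^{\le 0}\otimes Y^+[a]$ is itself $Q$-graded, with the degree $a$ recovered as the height of the $Y^+$-factor of any weight-homogeneous element.

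First I would dispose of the upper bound. Using \lemref{lem:XYz} together with the remark following it (namely $Y^+[k]\cdot Y^0\subseteq Y(\g)[k]$), an easy induction on the length of a word in the generators $x^-_{jr},h_{jr},h$ of $Y^{\le 0}$ shows $Y^+[k]\cdot Y^{\le 0}\subseteq\bigoplus_{a=0}^{k}Y(\g)[a]$: multiplying a term $uv\in Y(\g)[a]$ (with $u\in Y^{\le 0}$, $v\in Y^+[a]$) on the right by $h_{jr}$ or $h$ stays in $Y(\g)[a]$, and by $x^-_{jr}$ lands in $Y(\g)[a]\oplus Y(\g)[a-1]$. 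Since also $Y^{\le 0}\cdot Y^+[k]\subseteq Y(\g)[k]$ (immediate from the triangular decomposition), this gives $[Y^+[k],Z]\subseteq\bigoplus_{a=0}^{k}Y(\g)[a]$ for all $Z\in Y^{\le 0}$.

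Next comes the main point, the lower bound, where the $Q$-grading does the work. Write $Z=\sum_{\beta}Z_{-\beta}$ with $Z_{-\beta}\in Y^{\le 0}_{-\beta}$ (finitely many nonzero), and set $D=\max\{\mathrm{ht}\,\beta:Z_{-\beta}\neq 0\}$ (with $D=0$ if $Z=0$); it suffices to treat each $Z_{-\beta}$ separately, so assume $Z\in Y^{\le 0}_{-\beta}$. Take $Y\in Y^+[k]$ weight-homogeneous of weight $\mu$ with $\mathrm{ht}\,\mu=k$, so that $[Y,Z]$ is homogeneous of weight $\mu-\beta$. Writing $[Y,Z]=\sum_{a=0}^{k}W_a$ with $W_a\in Y(\g)[a]$, each $W_a$ lies in the weight space $Y(\g)[a]_{\mu-\beta}$. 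Under the $Q$-graded isomorphism $Y^{\le 0}\otimes_{\C}Y^+[a]\cong Y(\g)[a]$, a nonzero such $W_a$ must involve a summand $Y^{\le 0}_{-\gamma}\otimes Y^+[a]_{\delta}$ with $\gamma\in Q_+$ and $\delta-\gamma=\mu-\beta$; taking heights gives $a=\mathrm{ht}\,\delta=\mathrm{ht}\,\mu-\mathrm{ht}\,\beta+\mathrm{ht}\,\gamma\geq k-\mathrm{ht}\,\beta\geq k-D$. Hence $W_a=0$ whenever $a<k-D$, that is, $[Y^+[k],Z]\subseteq\bigoplus_{a=\max(0,\,k-D)}^{k}Y(\g)[a]$. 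Taking $N_m^{Z}=m+1+D$, for $k\geq N_m^{Z}$ we have $k-D\geq m+1$, and therefore $[Y^+[k],Z]\subseteq\bigoplus_{a=m+1}^{k}Y(\g)[a]$, as claimed.

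The step requiring the most care is the bookkeeping identifying, for a weight-homogeneous element of $Y(\g)[a]$, its $Y^+$-degree $a$ with the height of the $Y^+$-part of its weight; this is exactly what the multiplicative triangular decomposition (Assumption \ref{as.1}), together with the compatibility of the degree decomposition with the $Q$-grading, provides. It is also precisely the reason \lemref{lem:XYz} had to be established first: the weight count by itself only yields the lower bound $a\ge k-D$, whereas the upper bound $a\le k$ needs the inductive control of \lemref{lem:XYz} over how right-multiplication by $Y^{\le 0}$ acts on the degree decomposition.
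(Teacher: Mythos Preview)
Your proof is correct. Both your argument and the paper's obtain the upper bound $a\le k$ via the same induction on the length of $Z$ using \lemref{lem:XYz}, but for the lower bound $a\ge m+1$ you take a different route: rather than the paper's induction on the number $\ell$ of $x^-$-factors in a monomial $Z$ (writing $[X_k,Z_d]=Z_{d-1}[X_k,x^-_{j_d,s_d}]+[X_k,Z_{d-1}]x^-_{j_d,s_d}$ and using \lemref{lem:XYz} to track how each step can drop the $Y^+$-degree by at most $1$), you invoke the root-lattice grading on $Y(\g)$ directly, observing that if $Z$ has weight $-\beta$ then the compatibility of the triangular decomposition with the $Q$-grading forces any component of $[Y^+[k],Z]$ in $Y(\g)[a]$ to satisfy $a\ge k-\mathrm{ht}\,\beta$. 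For a monomial $Z$ your $D=\mathrm{ht}\,\beta$ coincides with the paper's $\ell$, so both arrive at $N_m^Z=m+\ell+1$. Your weight argument is more conceptual and makes the bound transparent in one stroke; the paper's induction is more self-contained (no explicit appeal to the $Q$-grading) and handles both bounds in a single pass.
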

\begin{proof} Without loss of generality, we may assume that $Z$ is a monomial in the generators of $Y^{\le 0}$, say  \begin{equation}
  Z=x_{j_1,s_1}^-\cdots x_{j_\ell,s_\ell}^-H,  \label{Y:mon}
 \end{equation} 
where $H$ is a monomial in the generators of $Y^0$.
 Since $Y^+[k]$ is spanned by homogeneous monomials of degree $k$, it suffices to prove the existence of $N_{m}^{Z}\geq 0$ such that 
\begin{equation*}
   [X_k,Z]\in \bigoplus_{a=m+1}^k Y(\g)[a] \quad \text{ for all }\; k\geq N_m^{Z}
  \end{equation*}
for every monomial $X_k$ of degree $k$. We will prove the stronger result that, for $Z$ as in \eqref{Y:mon}, $N_m^{Z}$ can be taken to be precisely $m+\ell+1$.  Set $Z_b=x_{j_1,s_1}^-\cdots x_{j_b,s_b}^-$ for each $0\leq b\leq \ell$, where $Z_0$ is understood to equal $1$.
By relation \eqref{L:1.1} of \lemref{lem:XYz} we have $Y(\g)[k]\cdot H\subset Y(\g)[k]$ for all $k\geq 0$, and in particular $[X_k,H]\in \bigoplus_{a=m+1}^k Y(\g)[a]$ whenever $k\geq m+1$. These observations together with the fact that 
\begin{equation*} 
 [X_k,Z]=[X_k,Z_\ell]H+Z_\ell[X_k,H]
\end{equation*}
imply that it suffices to show that $[X_k,Z_\ell]\in \bigoplus_{a=m+1}^k Y(\g)[a]$ for all $k\geq m+\ell+1$. We will prove this statement by induction on $\ell\geq 0$. The base of the induction is immediate since $Z_{0}=1$. Next, fix $d>0$ and assume inductively that the statement holds when $\ell$ is replaced by $d-1$. If now 
$\ell$ is replaced instead by $d$, then we may rewrite $[X_k,Z_d]$ as 
\begin{equation}
 [X_k,Z_d]=Z_{d-1}[X_k,x_{j_{d},s_{d}}^-]+[X_k,Z_{d-1}]x_{j_{d},s_{d}}^- \label{eq1}. 
\end{equation}
If $d=1$, the second term on the right-hand side of the above equation vanishes and \eqref{L:2.1} of \lemref{lem:XYz} yields that $Z_{d-1}[X_k,x_{j_{d},s_{d}}^-]$ belongs to $\bigoplus_{a=m+1}^k Y(\g)[a]$ for all $k\geq m+2$, as desired. If instead $d>1$, then the latter statement of the previous sentence still holds. Moreover, the inductive hypothesis implies that 
$[X_k,Z_{d-1}]\in \bigoplus_{a=m+2}^k Y(\g)[a]$ for $k\geq m+d+1$, and thus $[X_k,Z_{d-1}]x_{j_{d},s_{d}}^-\in \bigoplus_{a=m+1}^k Y(\g)[a]$
for all such values of $k$ as a consequence of \eqref{L:2.1}. Since $m+d+1\geq m+2$, we may conclude from \eqref{eq1} that $[X_k,Z_d]\in \bigoplus_{a=m+1}^k Y(\g)[a]$ whenever $k\geq m+d+1$. 
\end{proof}
\begin{Proposition}\label{prop:prodwhY}
The operation given in \eqref{prod} is a well-defined product which equips $\widehat{Y}(\g)$ with the structure of an associative algebra.
\end{Proposition}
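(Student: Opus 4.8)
The plan is to argue in three stages: (i) show that for every $m$ the sum defining the degree-$m$ component of $X^\circ\cdot X^\bullet$ in \eqref{prod} is finite, so that $X^\circ\cdot X^\bullet$ is a genuine element of $\widehat Y(\g)$; (ii) check that $\iota\colon Y(\g)\to\widehat Y(\g)$ is then an algebra homomorphism onto its image; and (iii) deduce associativity (together with the remaining axioms) from (i) and (ii) by reducing each identity, in a fixed total degree, to the corresponding identity inside $Y(\g)$.

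For (i), fix $m\ge 0$ and homogeneous components $X_k^\circ\in Y(\g)[k]$, $X_\ell^\bullet\in Y(\g)[\ell]$. Using the triangular decomposition \eqref{as.1} write $X_k^\circ=\sum_a P_aY_a$ and $X_\ell^\bullet=\sum_b Q_bZ_b$ with $P_a,Q_b\in Y^{\le 0}$, $Y_a\in Y^+[k]$, $Z_b\in Y^+[\ell]$, the sums over $a$ and $b$ being finite. In a summand $P_aY_aQ_bZ_b$ I move $Y_a$ past $Q_b$ via $Y_aQ_b=Q_bY_a+[Y_a,Q_b]$. Then $P_aQ_bY_aZ_b\in Y^{\le 0}Y^+[k+\ell]=Y(\g)[k+\ell]$, which contributes to degree $m$ only when $k+\ell=m$; and, since left multiplication by $Y^{\le 0}$ preserves the grading while right multiplication by $Z_b\in Y^+[\ell]$ raises it by exactly $\ell$ (using \eqref{as.2} and \eqref{as.1}), the other term $P_a[Y_a,Q_b]Z_b$ has no component in degrees $\le m$ as soon as $k\ge N_{m-\ell}^{Q_b}$, by \propref{prop:NmY}. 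When $\ell>m$ both terms have zero degree-$m$ part for every $k$, since by \lemref{lem:XYz} commuting with $Y^{\le 0}$ cannot raise the $Y^+$-degree. Summing over $a,b$ we conclude that $(X_k^\circ X_\ell^\bullet)_m=0$ whenever $\ell>m$, and whenever $\ell\le m$ and $k\ge\kappa_{m,\ell}$, where $\kappa_{m,\ell}:=\max_b N_{m-\ell}^{Q_b}$ depends only on $m$, $\ell$ and $X_\ell^\bullet$. Hence $(X^\circ X^\bullet)_m=\sum_{\ell=0}^{m}\sum_{k=0}^{\kappa_{m,\ell}}(X_k^\circ X_\ell^\bullet)_m$ is a finite sum; bilinearity of the resulting operation is then immediate, and the series $\iota(1)$ is easily seen to be a two-sided unit.

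For (ii), if $X,X'\in Y(\g)$ then $\iota(X)$ and $\iota(X')$ have only finitely many nonzero homogeneous components, so every sum above is finite and interchanging summations gives $\iota(X)\cdot\iota(X')=\iota(XX')$; thus $\iota$ is an algebra homomorphism. For (iii), fix $m$. The computation in (i) in fact yields a \emph{localization principle}: $(X^\circ X^\bullet)_m$ depends only on the components $X_k^\circ$ with $k\le K$ and $X_\ell^\bullet$ with $\ell\le m$, where the bound $K=K(m,X^\bullet):=\max_{\ell\le m}\kappa_{m,\ell}$ depends only on $m$ and on $\{X_\ell^\bullet:\ell\le m\}$. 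Applying this twice — once to the outer product and once to the inner products $(X^\circ X^\bullet)_p$ and $(X^\bullet X^\star)_s$ — one produces an integer $N$, depending on $m$ and on $X^\circ,X^\bullet,X^\star$, such that $\big((X^\circ X^\bullet)X^\star\big)_m$ and $\big(X^\circ(X^\bullet X^\star)\big)_m$ are unchanged when each factor is replaced by its truncation $\sum_{k\le N}(\,\cdot\,)_k\in Y(\g)$. Since these truncations lie in $Y(\g)$ and $\iota$ is a homomorphism by (ii), both expressions then coincide with the degree-$m$ part of $\iota$ applied to a product of three elements of $Y(\g)$, and those two products agree by associativity in $Y(\g)$. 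As $m$ was arbitrary, $(X^\circ\cdot X^\bullet)\cdot X^\star=X^\circ\cdot(X^\bullet\cdot X^\star)$.

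The only genuine difficulty is the finiteness statement in (i), and with it the uniformity of the bound: $\kappa_{m,\ell}$ must be independent of the left factor $X^\circ$, for otherwise the truncation argument in (iii) would not terminate. This uniformity is exactly what \propref{prop:NmY} supplies — the bound there is $m+\ell+1$ with $\ell$ the number of lower-triangular factors of $Q_b$, and does not involve $X^\circ$ — together with \lemref{lem:XYz} controlling how the grading behaves under multiplication by $Y^{\le 0}$. Everything else (bilinearity, the unit axiom, the rearrangements of finite sums, and the final appeal to associativity in $Y(\g)$) is routine and will be carried out without further comment.
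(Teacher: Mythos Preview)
Your argument for well-definedness in (i) is essentially identical to the paper's: both decompose each homogeneous component through the triangular decomposition, commute the $Y^+[k]$-part past the $Y^{\le 0}$-part of the right factor, observe that the non-commutator term lives in degree $k+\ell$, and invoke \propref{prop:NmY} (together with the observation $\ell\le m$) to bound the commutator contributions.

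Where you differ is in what comes after. The paper stops at well-definedness and does not spell out associativity, treating it as immediate once each $(X^\circ X^\bullet)_m$ is a finite sum of products in $Y(\g)$. Your steps (ii) and (iii) make this explicit via a truncation/localization principle: you note that the bound $\kappa_{m,\ell}$ is independent of the \emph{left} factor $X^\circ$, iterate to get a common truncation level $N$, and then appeal to associativity in $Y(\g)$ through $\iota$. This is a genuine (and correct) addition; the uniformity you need is precisely what \propref{prop:NmY} supplies, as you point out. The paper's route is shorter because it silently identifies the completed product with the inverse-limit description \eqref{hatY}, whereas your route is more self-contained and makes transparent why associativity is not automatic from the formula \eqref{prod} alone.
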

\begin{proof}
We have to see that $\sum_{k,\ell=0}^{\infty} (X^\circ_k X^\bullet_{\ell})_m$ reduces to a finite sum for every fixed $m$. The product $X^\circ_k X^\bullet_{\ell}$ is in $\bigoplus_{r=\ell}^{\infty} Y(\g)[r]$, so if $(X^\circ_k X^\bullet_{\ell})_m \neq 0$, then $\ell \le m$.

For each pair $k,\ell\in \Z_{\geq 0}$ write $X_k^\circ=\sum_{i\in I_k}y_{k,i} x_{k,i}^\circ$ and  $X_{\ell}^\bullet=\sum_{j\in J_{\ell}}z_{\ell,j} x^\bullet_{{\ell},j}$,
where  $y_{k,i},z_{\ell,j}\in Y^{\leq 0}$, $x_{k,i}^\circ\in Y^+[k]$ and $x_{\ell,j}^\bullet\in Y^+[\ell]$ for all $i\in I_k$, $j\in J_\ell$; $I_k$ and $J_\ell$ being finite sets. 
 We then have 
 \begin{align*}
\sum_{k,\ell=0}^{\infty} (X^\circ_k X^\bullet_{\ell})_m &=\sum_{k,{\ell}=0}^\infty\sum_{i\in I_k,j\in J_{\ell}} (y_{k,i}z_{\ell,j}x_{k,i}^\circ x_{{\ell},j}^\bullet)_m + \sum_{k,{\ell}=0}^\infty\sum_{i\in I_k,j\in J_{\ell}} (y_{k,i}[x_{k,i}^\circ,z_{\ell,j}]x_{{\ell},j}^\bullet)_m \\
                         &= \sum_{k+{\ell}=m}\sum_{i\in I_k,j\in J_{\ell}} y_{k,i}z_{\ell,j}x_{k,i}^\circ x_{{\ell},j}^\bullet 
                         +\sum_{k,{\ell}=0}^\infty\sum_{i\in I_k,j\in J_{\ell}} (y_{k,i}[x_{k,i}^\circ,z_{\ell,j}]x_{{\ell},j}^\bullet)_m.
 \end{align*}
The first sum is finite, so we need only to show that the second summation is also finite. Set \[  N= \max_{0 \le \ell\le m} \max_{j\in J_{\ell}} N_{m-\ell}^{z_{\ell,j}}. \] If $k\ge N$, then \propref{prop:NmY} implies that $[x_{k,i}^\circ,z_{\ell,j}]$ is a sum of homogeneous elements of degree $\ge m-\ell+1$ for all $0\leq \ell\leq m$. 
Therefore, 
\[ \sum_{k,{\ell}\geq 0}\sum_{i\in I_k,j\in J_{\ell}} (y_{k,i} [x_{k,i}^\circ,z_{\ell,j}]x_{{\ell},j}^\bullet)_m = \sum_{k=0}^N \sum_{\ell=0}^m \sum_{i\in I_k,j\in J_{\ell}} (y_{k,i}[x_{k,i}^\circ,z_{\ell,j}]x_{{\ell},j}^\bullet)_m \]
and the sum on the right-hand side is a finite sum. Associativity of the product on $\widehat{Y}(\g)$ is immediate.
\end{proof}
The last result of this subsection illustrates that $\widehat Y(\g)$ is particularly well-behaved with respect to the category $\mathscr O$ of $Y(\g)$.  
\begin{Proposition}\label{P:extO}
The completion $\widehat Y(\g)$ has the following properties: 
\begin{enumerate}
\renewcommand{\labelenumi}{\textup{(\theenumi)}}%
\item \label{P:comp1} For each $i\in I$, $J(x_{i}^\pm)$ and $J(h_i)$ (see \eqref{eq:Jhx}) can be viewed as elements of $\widehat Y(\g)$;
\item \label{P:comp2} Every module $V$ of $Y(\g)$ in the category $\mathscr O$ extends to a module over $\widehat Y(\g)$. 
\end{enumerate}
\end{Proposition}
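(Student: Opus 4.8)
The plan is to treat the two assertions separately, working throughout with the identification $\widehat Y(\g)=\prod_{k\ge 0}Y(\g)[k]$ from \lemref{L:IL}. The basic observation I would record first is that, since every generator $x_{ir}^{+}$ has degree $1$ and $\h$-weight $\alpha_i$, the homogeneous piece $Y^+[k]$ is precisely the sum of the $\h$-weight spaces of $Y^+$ whose weight has height $k$; in particular $Y^{\leq 0}=Y(\g)[0]$ because $Y^+[0]=\C$.

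For the first statement I would simply reorganise the explicit formulas \eqref{eq:v_i} for $v_i$ and the subsequent displays for $w_i^\pm$ according to this grading. For $\alpha\in\Delta_+$ a root vector $x_\alpha^{(k)}\in\g_\alpha$ lies in $Y^+[\operatorname{ht}(\alpha)]$ while $x_{-\alpha}^{(k)}\in\g_{-\alpha}\subset Y^{\leq 0}$, so the product $x_{-\alpha}^{(k)}x_\alpha^{(k)}$ — written with the negative factor on the left, hence requiring no straightening in the triangular decomposition — lies in $Y(\g)[\operatorname{ht}(\alpha)]$; likewise $x_{-\alpha}^{(k)}[x_i^+,x_\alpha^{(k)}]\in Y(\g)[\operatorname{ht}(\alpha)+1]$ and $[x_i^-,x_{-\alpha}^{(k)}]x_\alpha^{(k)}\in Y(\g)[\operatorname{ht}(\alpha)]$, while $h_{i1}$, $h_i$, $h_i^2$, $h_ix_i^\pm$, $x_i^\pm h_i$ and $x_{i1}^\pm$ are all of degree $0$ or $1$. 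Collecting the terms of a fixed degree $m\ge 1$ one gets, for example, $(J(h_i))_m=\tfrac12\sum_{\operatorname{ht}(\alpha)=m}(\alpha,\alpha_i)\sum_k x_{-\alpha}^{(k)}x_\alpha^{(k)}$ and analogous expressions for $(J(x_i^\pm))_m$, each of which is a \emph{finite} sum because there are only finitely many roots of a given height and every root space is finite dimensional. Hence $J(h_i)=\sum_m(J(h_i))_m$ and $J(x_i^\pm)=\sum_m(J(x_i^\pm))_m$ are legitimate elements of $\widehat Y(\g)$.

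For the second statement, let $V$ be a module in $\mathscr O$ with $\lambda_1,\dots,\lambda_k$ as in the definition. The crucial point is a vanishing property: if $0\ne v\in V_\mu$ and $y\in Y^+[k]$ has $\h$-weight $\beta$ (so $\operatorname{ht}(\beta)=k$), then $yv\in V_{\mu+\beta}$, and $V_{\mu+\beta}\ne 0$ forces $\lambda_i-\mu-\beta\in\sum_{j}\Z_{\ge 0}\alpha_j$ for some $i$ with $\lambda_i-\mu\in\sum_j\Z_{\ge 0}\alpha_j$, whence $k=\operatorname{ht}(\beta)\le\operatorname{ht}(\lambda_i-\mu)$. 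Setting $B(v)$ to be the maximum of $\operatorname{ht}(\lambda_i-\mu)$ over those $i$ with $\lambda_i-\mu\in\sum_j\Z_{\ge 0}\alpha_j$ (and taking the largest such bound over the weight components of a general $v$), and using $Y(\g)[m]=Y^{\leq 0}\cdot Y^+[m]$, this gives $Zv=0$ for every $Z\in Y(\g)[m]$ with $m>B(v)$. Consequently, for $\widehat X=\sum_{m\ge 0}X_m\in\widehat Y(\g)$ the sum $\widehat X\cdot v\defeq\sum_{m\ge 0}X_mv$ has only finitely many nonzero terms; this defines a bilinear pairing $\widehat Y(\g)\times V\to V$ which restricts along $\iota$ to the given $Y(\g)$-action, and $1\in\widehat Y(\g)$ acts as the identity.

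It then remains to check associativity, $(\widehat X\,\widehat Y)\cdot v=\widehat X\cdot(\widehat Y\cdot v)$ for $\widehat X=\sum_kX_k$, $\widehat Y=\sum_\ell Y_\ell$. For this I would invoke \propref{prop:prodwhY}: $(\widehat X\,\widehat Y)_m=\sum_{k,\ell}(X_kY_\ell)_m$ is a finite sum, and $(X_kY_\ell)_m=0$ unless $\ell\le m$ while $(X_kY_\ell)_mv=0$ unless $m\le B(v)$, so altogether only finitely many triples $(k,\ell,m)$ contribute to $\sum_{k,\ell,m}(X_kY_\ell)_mv$ and this triple sum may be rearranged at will. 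Summing over $m$ first identifies it with $(\widehat X\,\widehat Y)\cdot v$; on the other hand $\widehat Y\cdot v=\sum_{\ell\le B(v)}Y_\ell v$ is already a finite sum, and applying $\widehat X$ to it (with the bound attached to the vector $\widehat Y\cdot v$) and invoking associativity in $Y(\g)$ yields $\widehat X\cdot(\widehat Y\cdot v)=\sum_{k,\ell}X_kY_\ell v=\sum_{k,\ell,m}(X_kY_\ell)_mv$ as well. I expect this last bookkeeping of sums — reconciling the defining formula for the product on $\widehat Y(\g)$ with the module action — to be the only step requiring genuine care; everything else reduces to the two grading computations above together with the defining boundedness of weights in category $\mathscr O$.
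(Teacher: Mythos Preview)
Your proof is correct and follows the same approach as the paper's, but with considerably more detail. The paper's argument is only two sentences: for \eqref{P:comp1} it notes that the infinite sum $\sum_{\alpha\in\Delta_+}(\alpha,\alpha_i)\sum_k x_{-\alpha}^{(k)}x_\alpha^{(k)}$ lies in $\widehat Y(\g)$ ``which can be verified directly,'' and for \eqref{P:comp2} it simply observes that $X_V(v)=\sum_k X_k v$ is well-defined in $\End_\C V$ because $Y^+[k]v=0$ for $k\gg 0$. Your height-grading argument for \eqref{P:comp1} and your bound $B(v)$ for \eqref{P:comp2} are exactly the content of these two remarks made explicit; the paper does not spell out the associativity check at all, so your careful bookkeeping with the finite set of contributing triples $(k,\ell,m)$ goes beyond what the paper provides and fills a genuine (if routine) gap.
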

\begin{proof}
As $J(x_i^\pm)=\pm(\alpha_i,\alpha_i)^{-1}[J(h_i),x_i^\pm]$, it suffices to prove \eqref{P:comp1} for $J(h_i)$, which amounts to proving that the infinite sum $\boldsymbol{v}_i=\sum_{\alpha\in \Delta_+}(\alpha,\alpha_i)\sum_{k=1}^{\dim\g_\alpha}x_{-\alpha}^{(k)}x_\alpha^{(k)}$ is contained in  $\widehat Y(\g)$. This is a consequence of the observation that $x_{-\alpha}^{(k)}x_\alpha^{(k)}\in Y(\g)[\mathrm{ht}(\alpha)]$ and $\{\alpha\in \Delta_+\,:\,\mathrm{ht}(\alpha)=\ell\}$ is finite for each $\ell\geq 1$. As for \eqref{P:comp2}, given $X=\sum_{k=0}^\infty X_k\in \widehat Y(\g)$, the operator $X_V$ given by $X_V(\mathbf{v})=\sum_{k=0}^\infty X_k \mathbf{v}$ for all $\mathbf{v}\in V$ is a well-defined element of $\End_\C V$ because $Y^+[k]\mathbf{v}=0$ for all $k \gg 0$. 
\end{proof}
\begin{Remark}\label{Rem:com} 
The definition of $\widehat Y(\g)$ as the inverse limit \eqref{hatY} has been motivated by \S 10.1.D of \cite{CP-book} and Definition 1.5.8 in  \cite{KuBook}. In \cite{CP-book}, the analogous completion $\hat U_q(\g)$ of the quantum enveloping algebra $U_q(\g)$ (where $\g$ is finite-dimensional) was introduced in order to study the universal $R$-matrix of $U_q(\g)$. A similar completion for more general quantized Kac-Moody algebras was constructed in \cite[\S 4.1]{Jo}.
 \end{Remark}

\begin{Remark}
In \cite[\S 2]{ChIl}, the authors defined an algebra $\mathfrak{U}(R,\mathscr C)$ which can be associated to any ring $R$ and a full subcategory $\mathscr C$ of the 
 category of $R$-modules. Of specific interest in \cite{ChIl} was the case where $R=U(\g)$ and $\mathscr C$ is taken to be the category $\mathscr O$ for the Kac-Moody algebra $\g$ (to this effect, see also \cite{Ku}). However, when one takes instead $R=Y(\g)$ and $\mathscr C$ to be the category $\mathscr O$ for $Y(\g)$, one arrives
 at an algebra which is closely related to $\widehat{Y}(\g)$  as a left $Y(\g)$-module, but has a different multiplication. This construction has also served as a source of motivation for our definition of $\widehat{Y}(\g)$. 
 \end{Remark}

\subsection{The coproduct $\Delta\colon Y(\g)\to Y(\g)\widehat\otimes Y(\g)$}
Let $\g$ be as in the previous subsection with $Y(\g)$ satisfying the assumptions \eqref{as.1} and \eqref{as.2}. Consider the Yangian $Y(\g\oplus \g)$. As algebras, we have the isomorphism $Y(\g\oplus \g)\cong Y(\g)\otimes Y(\g)$ (see for instance Proposition II.4.2 of \cite{Kas-book}). In particular, $Y(\g\oplus \g)$ also satisfies the assumptions \eqref{as.1} and \eqref{as.2}, and therefore we can define $Y(\g)\widehat \otimes Y(\g)$ using 
\defref{def:hatY}:
\begin{equation}
Y(\g)\widehat\otimes Y(\g)=\widehat Y(\g\oplus \g). \label{hatY2}
\end{equation}
More generally, we define the completed $n$-th tensor power $Y(\g)^{\widehat \otimes n}=Y(\g)\widehat\otimes \cdots \widehat\otimes Y(\g)$ (where $Y(\g)$ appears $n$-times) as $\widehat Y(\g^{\oplus n})$. Using \lemref{L:IL}, we can identify $Y(\g)\widehat\otimes Y(\g)$ with the direct product \[ \prod_{k=0}^\infty (Y(\g)\otimes Y(\g))[k]=\prod_{k=0}^\infty \left(\bigoplus_{r+s=k}Y(\g)[r]\otimes Y(\g)[s] \right). \]

We now return to the setting where $\g$ is an affine Lie algebra with an indecomposable Cartan matrix $(a_{ij})_{i,j\in I}$, which is not of type $A_1^{(1)}$ or $A_2^{(2)}$.

Note that the half Casimir $\Omega_+$ from \eqref{HalfCas} can be viewed as an element of $Y(\g)\widehat\otimes Y(\g)$, and therefore we may define the assignment 
\begin{equation*}
\Delta\colon \{x_{i0}^\pm, h_{i1}, h\,:\, i\in I, h\in \h\}\to Y(\g)\widehat\otimes Y(\g)
\end{equation*}
exactly as $\Delta_{V_1,V_2}$ has been defined in \eqref{eq:assign}, except that $\End_\C(V_1\otimes V_2)$ should be replaced by $Y(\g)\widehat\otimes Y(\g)$.
\begin{Proposition}\label{Prop:Del.comp} Assume that $\g$ is of affine type, but not $A_1^{(1)}$ or $A_2^{(2)}$. 
The assignment $\Delta$ extends to an algebra homomorphism $\Delta\colon Y(\g)\to Y(\g)\widehat\otimes Y(\g)$. Additionally, 
if $V_1,V_2$ belong to the category $\mathscr O$ and $\rho_1,\rho_2$ are the corresponding homomorphisms $Y(\g)\to \End_\C(V_1)$ and $Y(\g)\to \End_\C(V_2)$, respectively, then $\rho_1\otimes \rho_2$ extends to $\rho_1\widehat\otimes \rho_2\colon Y(\g)\widehat\otimes Y(\g)\to \End_\C(V_1\otimes V_2)$, and we have 
\begin{equation*}
 \Delta_{V_1,V_2}=(\rho_1\widehat\otimes \rho_2)\circ \Delta. 
\end{equation*}
\end{Proposition}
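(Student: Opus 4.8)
The plan is to establish \prop{Prop:Del.comp} in two stages: first the existence of the algebra homomorphism $\Delta\colon Y(\g)\to Y(\g)\widehat\otimes Y(\g)$, and then its compatibility with the evaluation homomorphisms $\rho_1\widehat\otimes\rho_2$.

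\textbf{Stage 1 (existence of $\Delta$).} First I would check that the right-hand sides of the formulas \eqref{eq:assign} really do lie in $Y(\g)\widehat\otimes Y(\g)$: the only nontrivial term is the infinite sum $\sum_{\alpha\in\Delta_+}(\alpha_i,\alpha)\sum_k x_{-\alpha}^{(k)}\otimes x_\alpha^{(k)}$ appearing in $\Delta(h_{i1})$, and this is a well-defined element of the completion because the degree of the factor $x_\alpha^{(k)}\in Y(\g\oplus\g)$ grows with the height of $\alpha$ (this is essentially the argument in \propref{P:extO}\eqref{P:comp1}, applied to $\g\oplus\g$ and its vertex subset corresponding to the second copy of $\g$). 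Then I would verify that $\Delta$ respects the defining relations of $Y(\g')$ listed in \thmref{thm:deduction}, together with the extra Cartan relations \eqref{eq:Cartan}. The crucial point is that every single computation carried out in the proof of \thmref{thm:coproduct} in Subsections ``Proof of \thmref{thm:coproduct}, Part~I and II'' is purely formal: each manipulation uses only the commutation relations \eqref{eq:hO}--\eqref{eq:yO} for $\Omega_+$, the relations in $Y(\g')$, and \propref{prop:Jroot}, \corref{cor:JJ}, \propref{Jhv} --- and all of those hold verbatim with $\End_\C(V_1\otimes V_2)$ replaced by $Y(\g)\widehat\otimes Y(\g)$, once one knows that $\Omega_+$, $\Omega_-$, $\Omega$, the $J(h_i)$, $J(x_\alpha^\pm)$ and $\tilde v_i$ all make sense as elements of this completion and that the automorphisms $\tau_i$ act on it. The required facts about $\widehat Y$ are: the product on $\widehat Y(\g\oplus\g)$ is well-defined and associative (\propref{prop:prodwhY}); the elements $J(x_i^\pm),J(h_i)$ lie in it (\propref{P:extO}\eqref{P:comp1}); and the $\tau_i$, being $\exp$ of locally nilpotent inner derivations $\ad(e_i),\ad(f_i)$, extend continuously to $\widehat Y(\g)$ and hence act on the relevant subspace. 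With all of that in place, the verification of each relation is literally the same sequence of symbols as before. I therefore plan to phrase Stage~1 as: ``repeat the proof of \thmref{thm:coproduct} word for word, reading every identity as an identity in $Y(\g)\widehat\otimes Y(\g)$; the only thing to check is that each intermediate expression lies in the completion, which follows from \propref{prop:prodwhY} and \propref{P:extO}.''

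\textbf{Stage 2 (compatibility with evaluation).} Given $V_1,V_2$ in $\mathscr O$ with structure maps $\rho_i\colon Y(\g)\to\End_\C(V_i)$, the tensor product $\rho_1\otimes\rho_2\colon Y(\g)\otimes Y(\g)\to\End_\C(V_1\otimes V_2)$ is an algebra homomorphism, i.e.\ an action of $Y(\g\oplus\g)$ on $V_1\otimes V_2$, and that action puts $V_1\otimes V_2$ in the category $\mathscr O$ for $Y(\g\oplus\g)$ (finite-dimensional weight spaces, weights bounded above, diagonalizable over $\h\oplus\h$ --- each condition is inherited from the factors). Hence by \propref{P:extO}\eqref{P:comp2} applied to $Y(\g\oplus\g)$, this action extends to $\widehat Y(\g\oplus\g)=Y(\g)\widehat\otimes Y(\g)$, giving the map $\rho_1\widehat\otimes\rho_2$. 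Now both $\Delta_{V_1,V_2}$ and $(\rho_1\widehat\otimes\rho_2)\circ\Delta$ are algebra homomorphisms $Y(\g)\to\End_\C(V_1\otimes V_2)$, so it suffices to check they agree on the algebra generators $x_{i0}^\pm$, $h_{i1}$ and $h\in\h$. On these generators the formulas defining $\Delta_{V_1,V_2}$ in \eqref{eq:assign} and $\Delta$ in the paragraph before \propref{Prop:Del.comp} are identical \emph{as symbolic expressions} in $\Omega_+$, $\square(\cdot)$ and $h_{i0}\otimes h_{i0}$; applying $\rho_1\widehat\otimes\rho_2$ to the latter just means replacing each $Y(\g)\otimes Y(\g)$-symbol by its image in $\End_\C(V_1\otimes V_2)$, which reproduces exactly the former, because $\rho_1\widehat\otimes\rho_2$ is continuous and sends $\Omega_+\in Y(\g)\widehat\otimes Y(\g)$ to the operator $\Omega_+$ on $V_1\otimes V_2$ (the sum defining $\Omega_+$ converges to an actual operator since almost all of its terms annihilate any given vector). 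So the desired equality $\Delta_{V_1,V_2}=(\rho_1\widehat\otimes\rho_2)\circ\Delta$ holds on generators, hence everywhere.

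\textbf{Main obstacle.} The genuinely delicate part is not any single computation but making the ``repeat the earlier proof verbatim'' argument of Stage~1 rigorous --- specifically, confirming that \emph{every} intermediate expression appearing in the long proof of \thmref{thm:coproduct} (the operators $J(x_\alpha^\pm)$ for non-simple real roots $\alpha$, the doubly-infinite sums in \eqref{eq:C}--\eqref{eq:A.2}, and the images of these under $\tau_i$) really does converge in the topology of $Y(\g)\widehat\otimes Y(\g)$, i.e.\ defines an element of $\prod_k (Y(\g)\otimes Y(\g))[k]$ with the product well-defined by \propref{prop:prodwhY}. The cleanest way to handle this is to observe that the map sending a module $V\in\mathscr O$ to the evaluation $\End_\C(V)$ is faithful in the relevant sense: since $\cap_\omega\mathrm{Ann}(L(\omega))=\{0\}$ and one can build enough modules in $\mathscr O$ over $Y(\g\oplus\g)$, an element of $Y(\g)\widehat\otimes Y(\g)$ that acts as zero on all of them is zero; therefore an identity in $Y(\g)\widehat\otimes Y(\g)$ between two convergent expressions can be checked after applying all $\rho_1\widehat\otimes\rho_2$, reducing Stage~1 to \thmref{thm:coproduct} itself. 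I would adopt this reduction: prove first that $\widehat Y(\g\oplus\g)\to\prod_{V}\End_\C(V)$ is injective, then deduce that the relations of \thmref{thm:deduction} hold for $\Delta$ in $Y(\g)\widehat\otimes Y(\g)$ from the fact --- already established in \thmref{thm:coproduct} --- that they hold after every evaluation.
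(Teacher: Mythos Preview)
Your Stages 1 and 2, taken at face value, are exactly the paper's proof: the paper simply says that Parts I and II of the proof of \thmref{thm:coproduct} ``proceed without alteration'' once one observes that $\Omega$, $\Omega_\pm$ (and hence $\Delta(\tilde h_{i1})$, $\Delta(\tilde v_i)$, $\Delta(J(h_i))$) lie in $Y(\g)\widehat\otimes Y(\g)$, and then deduces the compatibility $\Delta_{V_1,V_2}=(\rho_1\widehat\otimes\rho_2)\circ\Delta$ by noting that $V_1\otimes V_2$ is in category $\mathscr O$ for $Y(\g\oplus\g)$, invoking \propref{P:extO}\eqref{P:comp2}, and checking agreement on generators.

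Where you diverge is in the ``Main obstacle'' paragraph. The paper does \emph{not} pursue the faithfulness reduction you propose; it takes the direct route and simply asserts that the computations transfer to the completion. Your alternative---proving that $\widehat Y(\g\oplus\g)\to\prod_V\End_\C(V)$ is injective and then deducing the relations from \thmref{thm:coproduct}---is logically sound in outline and has the virtue of sidestepping any term-by-term convergence check, but it trades one difficulty for another: the injectivity of that map on the \emph{completion} is a genuine claim that is neither proved in the paper nor obviously reducible to the $\cap_\omega\mathrm{Ann}(L(\omega))=\{0\}$ argument for $U(\g)$ (an infinite sum $\sum_k X_k$ could in principle act as zero on every category-$\mathscr O$ module without each $X_k$ vanishing, and ruling this out requires an argument you have not supplied). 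So your proposed reduction replaces the paper's implicit ``all intermediate expressions converge'' with an explicit injectivity lemma that you would still need to prove. If you stick with the direct approach of your Stage~1, you match the paper; if you insist on the faithfulness route, you owe a proof of that injectivity.
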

\begin{proof}
Part I of the proof of \thmref{thm:coproduct} can be carried out without modification when $\End_\C(V_1\otimes V_2)$ is replaced by $Y(\g)\widehat\otimes Y(\g)$.

For Part II of the proof of \thmref{thm:coproduct} we just need to explain why \propref{prop:Jroot} holds. Since the adjoint action of $\g$ on $Y(\g)$ is integrable (by \eqref{eq:relDS}), the formula \eqref{tau_i} now defines an algebra automorphism of $Y(\g)$ (by \cite[Lemma 1.3.5 (b)]{KuBook}, for instance). By the proof of \lemref{L:tau}, it extends to an automorphism of the subalgebra of $\widehat Y(\g)$ generated by $Y(\g)$ and $\{v_i\}_{i\in I}$, which is sufficient for our purposes. (That $v_i\in\widehat Y(\g)$ is a consequence of \propref{P:extO}.) Since \lemref{lem:vw} holds in $\widehat Y(\g)$, both \lemref{lem:tauJ} and \propref{prop:Jroot} can be proven as before using the automorphism $\tau_i$ described in the previous sentence.

If $(\rho_1,V_1)$ and $(\rho_2,V_2)$ are two representations in the category $\mathscr O$, then $(\rho_1\otimes \rho_2,V_1\otimes V_2)$ belongs to the category of $\mathscr O$ for the Yangian $Y(\g\oplus\g)$. Therefore, by \propref{P:extO}, $\rho_1\otimes\rho_2$ extends to 
a homomorphism 
\begin{equation*}
\rho_1\widehat\otimes \rho_2\colon \widehat Y(\g\oplus\g)=Y(\g)\widehat\otimes Y(\g)\to \End_\C (V_1\otimes V_2).
\end{equation*}
The equality $\Delta_{V_1,V_2}=(\rho_1\widehat\otimes \rho_2)\circ \Delta$ is now immediate since both sides agree on generators of $Y(\g)$. 
\end{proof}

\subsection{The modified Yangian}
An alternative to working with the completed tensor product $Y(\g)\widehat\otimes Y(\g)$ is to replace $\Delta$ by a family of linear maps $\Delta_{\lambda_1,\mu_1, \lambda_2,\mu_2}$ as suggested, for instance, in Chapter 23 of \cite{Lu}. This alternative also fits with the geometric construction in \cite{MaOk}, once it is formulated as in \cite{Na}. We assume again that $\g$ is affine and not of type $A_1^{(1)}$ or $A_2^{(2)}$. In particular, $\h=\mathrm{span}\, \mathcal{B}$ where $\mathcal{B} = \{ h_{i0},d \, | \, i\in I \}$ and $d$ is the derivation. Given two elements $\lambda,\mu$ of the weight lattice of $\g$, set \[ {}_{\lambda}Y(\g)_{\mu} = Y(\g)/ \left( \sum_{h \in \mathcal{B}} (h - \lambda(h)) Y(\g) + \sum_{h \in \mathcal{B}} Y(\g) (h - \mu(h)) \right) \] and let $\pi_{\lambda,\mu}\colon Y(\g) \twoheadrightarrow {}_{\lambda}Y(\g)_{\mu}$ be the projection map. Following \cite{Lu}, the non-unital algebra $\bigoplus_{\lambda,\mu} {}_{\lambda}Y(\g)_{\mu}$ could be called the modified Yangian. We will denote it $\dot{Y}(\g)$. Its algebra structure is defined as in \cite[\S 23.1.1]{Lu}: for any $\lambda_1,\mu_1,\lambda_2,\mu_2$ in the weight lattice of $\g$ and any $x_1\in Y(\g)\{\lambda_1-\mu_1\}$, $x_2\in Y(\g)\{\lambda_2-\mu_2\}$, we set $\pi_{\lambda_1,\mu_1}(x_1)\pi_{\lambda_2,\mu_2}(x_2)=\delta_{\mu_1,\lambda_2}\pi_{\lambda_1,\mu_2}(x_1x_2)$.

We have a root grading on $Y(\g)$ given by $\mathrm{deg}(x_{ir}^{\pm}) = \pm \alpha_i$, $\mathrm{deg}(h_{ir}) = 0 $ for all $i\in I, \, r\ge 0$ and $\mathrm{deg}(d)=0$, which leads to direct sum decompositions into graded pieces\[ Y(\g) = \bigoplus_{\nu \in \Z\Delta} Y(\g)\{\nu\} \text{ and } \dot{Y}(\g) = \bigoplus_{\nu \in \Z\Delta} \bigoplus_{\lambda,\mu} \pi_{\lambda,\mu}(Y(\g)\{\nu\} ). \]  Moreover, $\pi_{\lambda,\mu}(Y(\g)\{\nu\}) \neq 0 $ only if $\lambda - \mu = \nu$.

Now let $\lambda_1,\mu_1, \lambda_2,\mu_2$ be elements of the weight lattice of $\g$. The map $\pi_{\lambda_1, \mu_1} \otimes \pi_{\lambda_2,\mu_2}\colon Y(\g) \otimes Y(\g) \rightarrow {}_{\lambda_1}Y(\g)_{\mu_1} \otimes {}_{\lambda_2}Y(\g)_{\mu_2}$ can be restricted to $(Y(\g) \otimes Y(\g))[k]$ for any $k$ and we denote its restriction by $(\pi_{\lambda_1, \mu_1} \otimes \pi_{\lambda_2,\mu_2})|_k$. Set $({}_{\lambda_1}Y(\g)_{\mu_1} \otimes {}_{\lambda_2}Y(\g)_{\mu_2})[k] = (\pi_{\lambda_1, \mu_1} \otimes \pi_{\lambda_2,\mu_2})|_k ((Y(\g) \otimes Y(\g))[k])$. It can also be extended to a map  \[ \pi_{\lambda_1, \mu_1} \widehat{\otimes} \pi_{\lambda_2,\mu_2}\colon Y(\g) \widehat{\otimes} Y(\g) \rightarrow \prod_{k=0}^{\infty} ({}_{\lambda_1}Y(\g)_{\mu_1} \otimes {}_{\lambda_2}Y(\g)_{\mu_2})[k] \] by setting \[ \pi_{\lambda_1, \mu_1}  \widehat{\otimes} \pi_{\lambda_2,\mu_2} = \prod_{k=0}^{\infty} (\pi_{\lambda_1, \mu_1} \otimes \pi_{\lambda_2,\mu_2})|_k. \] 

Following \cite{Lu}, we define the linear map \[ \Delta_{\lambda_1,\mu_1, \lambda_2,\mu_2}\colon {}_{\lambda_1 + \lambda_2}Y(\g)_{\mu_1+ \mu_2} \rightarrow   \prod_{k=0}^{\infty} ({}_{\lambda_1}Y(\g)_{\mu_1} \otimes {}_{\lambda_2}Y(\g)_{\mu_2})[k]  \] by \[  \Delta_{\lambda_1,\mu_1, \lambda_2,\mu_2}(\pi_{\lambda_1 + \lambda_2, \mu_1 + \mu_2}(x)) = (\pi_{\lambda_1, \mu_1}  \widehat{\otimes} \pi_{\lambda_2,\mu_2}) (\Delta(x)). \] It turns out that the image of $\Delta_{\lambda_1,\mu_1, \lambda_2,\mu_2}$ is actually contained in $\oplus_{k=0}^{\infty} ({}_{\lambda_1}Y(\g)_{\mu_1} \otimes {}_{\lambda_2}Y(\g)_{\mu_2})[k] \cong {}_{\lambda_1}Y(\g)_{\mu_1} \otimes {}_{\lambda_2}Y(\g)_{\mu_2}$: to see this, observe that, for any fixed $\lambda_1,\mu_1,\lambda_2,\mu_2$, there are only finitely many terms of $\Delta(h_{i1})$ which are contained in $Y(\g)\{\lambda_1 - \mu_1\} \otimes Y(\g)\{\lambda_2 - \mu_2\}$ and the same is true consequently for $\Delta(x)$ for any $x \in Y(\g)$.

\section{The parameter dependent coproduct $\Delta_u$}\label{Sec:paracoprod}

In this section, we construct a parameter dependent coproduct 
$\Delta_u:Y(\g)\to (Y(\g)\otimes Y(\g)(\!(u)\!)$
from which the homomorphisms $\Delta_{V_1,V_2}$ and $\Delta$ of \thmref{thm:coproduct} and \propref{Prop:Del.comp} (assuming \eqref{as.1} and \eqref{as.2}), respectively, can be recovered. In addition to unifying the constructions of the present paper, $\Delta_u$ has been applied in \cite{GRW2} to prove the Poincar\'{e}-Birkhoff-Witt Theorem for simply laced affine Yangians.

\subsection{Definition of $\Delta_u$}

The construction of $\Delta_u$ is based on the existence of a gradation homomorphism $\mathrm{s}_u:Y(\g)\to Y(\g)[u^{\pm 1}]$, which is defined by
\begin{equation*}
 \mathrm{s}_u: x_{ir}^\pm \mapsto u^{\pm 1} x_{ir}^\pm, \quad h_{ir}\mapsto h_{ir},\quad h\mapsto h \quad \forall \; i\in I, r\geq 0 \;\text{ and }\; h\in \h.
\end{equation*}
If $\g$ is of finite type, 
then $Y(\g)$ is a Hopf algebra with a genuine coproduct $\Delta:Y(\g)\to Y(\g)\otimes Y(\g)$, and we may set 
\begin{equation*}
 \Delta_u=(1\otimes \mathrm{s}_u)\circ \Delta: Y(\g)\to (Y(\g)\otimes Y(\g))[u^{\pm 1}]. 
\end{equation*}
Now suppose that $\g$ is an affine Kac-Moody algebra which is not of type $A_1^{(1)}$ or $A_2^{(2)}$. Then we
may still formally apply $1\otimes \mathrm{s}_u$ to the right-hand side of the assignment $\Delta$ of \defref{def:Delta} to produce an assignment $\Delta_u$. That is, we let 
\begin{equation*}
 \Delta_u:\{x_{i0}^\pm, \tilde h_{i1},h\,:\,i\in I,\, h\in \h\}\to (Y(\g)\otimes Y(\g))(\!(u)\!),
\end{equation*} 
 be the assignment given by 
\begin{gather} 
\begin{split}\label{Delta_u}
 \Delta_u(x_{i0}^\pm)=x_{i0}^\pm \otimes 1 + 1\otimes x_{i0}^\pm u^{\pm 1}, \quad \Delta(h)=\square(h),\\
 \Delta_u(\tilde h_{i1})=\square (\tilde h_{i1})-\sum_{k=1}^\infty\left( \sum_{\alpha\in \Delta_+^\mathrm{re}(k)}(\alpha,\alpha_i)x_\alpha^-\otimes x_\alpha^+ \right) u^{k},
 \end{split}
\end{gather}
for all $i\in I$ and $h\in \h$, where $\Delta_+^{\mathrm{re}}(k)=\{\alpha\in \Delta_+^{\mathrm{re}}\,:\,\mathrm{ht}(\alpha)=k\}$ as in \secref{sec:proofII}. 
\begin{Theorem}\label{thm:cop_u}
  Assume $\g$ is either finite-dimensional (but not $\algsl_2$) or of affine type (but not of type $A_1^{(1)}$ or $A^{(2)}_2$).
  Then the assignment $\Delta_u$ defines an algebra homomorphism $\Delta_u
  \colon Y(\g)\to (Y(\g)\otimes Y(\g))(\!(u)\!)$.
\end{Theorem}
The proof of \thmref{thm:cop_u}, to be given in \subsecref{cop_u:pf}, is based on the arguments employed to prove \thmref{thm:coproduct}. The added difficulty lies in making sense of the operators of \subsecref{S:YopcatO}, which play an essential argument in the Part II of the proof of  \thmref{thm:coproduct}. Given the assumptions \eqref{as.1} and \eqref{as.2}, this can be addressed using the completion $\widehat{Y}(\g)$ as in the previous subsection. To avoid these assumptions, we will construct a $\g$-module $\widetilde{Y}(\g)$ containing $Y(\g)$ as a submodule. 

\subsection{The $\g$-module $\widetilde{Y}(\g)$}

Let $\mathfrak{n}_\pm$ be the Lie subalgebras of $\g$ generated by $\{x_i^\pm\}_{i\in I}$ and set $\mathfrak{b}_-=\mathfrak{n}_-\oplus \h$. Since $U(\g)$ admits a multiplicative triangular decomposition 
\begin{equation*}
 U(\g)\cong U(\mathfrak{n}_-)\otimes U(\h)\otimes U(\mathfrak{n}_+)\cong U(\mathfrak{b}_-)\otimes U(\mathfrak{n}_+)
\end{equation*}
and the algebra $U(\mathfrak{n}_+)$ has a $\Z_{\geq 0}$-grading $U(\mathfrak{n}_+)=\bigoplus_{k\geq 0}U(\mathfrak{n}_+)[k]$ given by $\deg x_i^+=1$ for all $i\in I$, we have the vector space decomposition $U(\g)=\bigoplus_{k\geq 0}U(\g)[k]$ with $U(\g)[k]=U(\mathfrak{b}_-)\otimes U(\mathfrak{n}_+)[k]$. Set 
\begin{equation*}
\widehat U(\g)=\prod_{k\geq 0}U(\g)[k].
\end{equation*}
By \cite[\S 1.5]{KuBook} (or the arguments of \S 5(i)), $\widehat U(\g)$ inherits from $U(\g)$ the structure of an associative algebra. In addition,
the formulas \eqref{eq:v_i}, \eqref{eq:w_i} and \eqref{v_beta} give rise to well-defined families of elements $\{v_i,w_i^\pm\}_{i\in I},\{\boldsymbol{v}_\beta\}_{\beta\in \Delta}\subset \widehat U(\g)$. Note that, under our current assumptions, $\boldsymbol{v}_\beta$ coincides with  $\sum_{\alpha\in \Delta_+^{\mathrm{re}}}(\alpha,\beta)x_\alpha^-x_\alpha^+\in \widehat U(\g)$. As before, we denote $\boldsymbol{v}_{\alpha_i}$ simply by $\boldsymbol{v}_i$. 
 
 We may view both $Y(\g)$ and $\widehat U(\g)$ as $\g$-modules equipped with the adjoint action. Their direct sum $Y(\g)\oplus \widehat U(\g)$ then contains the submodule $\mathbf{U}=\{\iota(x)-x\,:\,x\in U(\g)\}$ (see \eqref{eqn:iota}).
\begin{Definition}
 The $\g$-module $\widetilde Y(\g)$ is defined as the $\g$-submodule of $(Y(\g)\oplus \widehat U(\g))/\mathbf{U}$ generated by the images of $Y(\g)$ and $\{\boldsymbol{v}_i\}_{i\in I}$. 
\end{Definition}
\begin{Remark}
 It is worth explaining the motivation behind the above definition. As will become evident in  \subsecref{cop_u:pf}, the main obstacle to adapting Part II of the proof of \thmref{thm:coproduct} is establishing the identity \eqref{Lt'} below, which concerns the adjoint action of $\g$ on $Y(\g)$. In the proof of \thmref{thm:coproduct}, \propref{prop:Jroot} was employed to reduce this relation to \eqref{Lt}, which is perfectly valid in  $\widehat U(\g)$, but does not have a meaning in $Y(\g)$. A solution to overcoming this obstacle without the assumptions \eqref{as.1} and \eqref{as.2} is thus to construct a $\g$-module which 
 \begin{enumerate}[(a)]
  \item contains $Y(\g)$ (viewed as a $\g$-module with the adjoint action) as a submodule,
  \item contains an image of the algebra $\widehat U(\g)$ where \eqref{Lt} is valid,
  \item and where \propref{prop:Jroot} (a $\g$-module statement) is still valid.
 \end{enumerate}
As \lemref{L:delta_u} below proves, $\widetilde Y(\g)$ has all of these properties. 
\end{Remark}
We let $\mathsf{ad}:\g\to \End_\C(\widetilde Y(\g))$ denote the homomorphism of Lie algebras responsible for the $\g$-module structure on $\widetilde Y(\g)$. 

For each $i\in I$, define $J(h_i)\in \widetilde Y(\g)$ to be the image of $h_{i1}+v_i$ in $\widetilde Y(\g)$ and set \[J(x_i^\pm)=\pm(\alpha_i,\alpha_i)^{-1}[x_i^\pm,J(h_i)]\in \widetilde Y(\g).\] Slightly abusing notation, we will denote the images of $v_{i}$, $w_i^\pm$ and $ \boldsymbol{v}_\beta$ in $\widetilde Y(\g)$ again by $v_i$, $w_i^\pm$ and $\boldsymbol{v}_\beta$, respectively. Similarly, for each $X\in Y(\g)$, the image of $X$ under the natural map $Y(\g)\to \widetilde Y(\g)$ (which by the next lemma is injective) will be again denoted by $X$.  
\begin{Lemma}\label{L:delta_u}
 \leavevmode
 \begin{enumerate}
  \item \label{L:du-1} The natural homomorphism of $\g$-modules $Y(\g)\to \widetilde Y(\g)$ is an embedding,
  \item \label{L:du-2} For each $i\in I$, $\tau_i=\exp(\mathsf{ad}(e_i))\exp(-\mathsf{ad}(f_i))\exp(\mathsf{ad}(e_i))\in \End_{\C}(\widetilde Y(\g))$,
  \item \label{L:du-3} The statements of \lemref{lem:vw}, \lemref{lem:tauJ}, and \propref{prop:Jroot} hold in $\widetilde Y(\g)$. 
 \end{enumerate}

\end{Lemma}
\begin{proof}
Part \eqref{L:du-1} is straightforward. Consider Part \eqref{L:du-2}. 
By the same reasoning as given in \subsecref{S:commrel} (see \eqref{tau_i}), $\tau_i$ defines a linear automorphism of the image of $Y(\g)$ in $\widetilde Y(\g)$ which extends to all of $\widetilde Y(\g)$ since it sends $\boldsymbol{v}_j$ to $\boldsymbol{v}_{s_i(\alpha_j)}+(\alpha_i,\alpha_j)\{x_{i0}^-,x_{i0}^+\}$ (see \lemref{L:tau}) and $\widetilde Y(\g)$ is generated by $Y(\g)\cup\{\boldsymbol{v}_i\}_{i\in I}$.

Additionally, we observe that, by Lemma 1.3.5 (b) of \cite{KuBook}, 
\begin{equation} \label{tau:alg}
 \tau_i([x,y])=[\tau_i(x),\tau_i(y)] \quad \forall \; x\in \g,\; y\in \widetilde Y(\g). 
\end{equation}
Now let us turn to Part \eqref{L:du-3} of the lemma. The relations of \lemref{lem:vw} hold in the algebra $\widehat U(\g)$, and hence also in the $\g$-module $\widetilde Y(\g)$. Consequently, the proof of \lemref{lem:tauJ} remains valid after replacing $\End_\C(V)$ with $\widetilde Y(\g)$ (and with $\tau_i$ as in Part \eqref{L:du-2}). Moreover, although $\tau_i$ is not an algebra homomorphism (as $\widetilde Y(\g)$ is not an algebra), the property \eqref{tau:alg} guarantees that the proof of \propref{prop:Jroot} is still valid in our current setting. \qedhere
\end{proof}

\subsection{Proof of \thmref{thm:cop_u}}\label{cop_u:pf} 
Consider the formal series 
\begin{equation*}
 \Omega_+(u)\defeq\sum_{m=1}^{\dim \h}h_{(m)}\otimes h^{(m)}+\sum_{k=1}^\infty\left( \sum_{\alpha\in \Delta_+^\mathrm{re}(k)}x_\alpha^-\otimes x_\alpha^+ \right) u^{k}\in (Y(\g)\otimes Y(\g))[\![u]\!].
\end{equation*}
One proves exactly as in the proof of \lemref{lem:sq} that $\Omega_+(u)$ satisfies 
\begin{align*}
  & [\square (h), \Omega_+(u)] = 0 \quad \text{for $h\in\mathfrak h$},
\\
  & [\square_u(x_i^+), \Omega_+(u)] = - x_i^+ \otimes h_i, 
\\
  & [\square_u(x_i^-), \Omega_+(u)] =  h_i\otimes x^-_i u^{-1},
\end{align*}
where $\square_u(x_i^\pm)=x_i^\pm \otimes 1 +1\otimes x_i^\pm u^{\pm 1}$ for each $i\in I$. 

The proof that the assignment $\Delta_u$ preserves the relations of \thmref{thm:deduction}, excluding \eqref{eq:relHH'} with $r=s=1$, is now achieved as in Part I of the proof of \thmref{thm:coproduct} after inserting powers of $u$ in appropriate places and replacing the role of \lemref{lem:sq} by the above commutation relations for $\Omega_+(u)$. 

To complete the proof, we must see why $[\Delta_u(\tilde h_{i1}),\Delta_u(\tilde h_{j1})]=0$ for all $i,j\in I$. As $\Delta_u(\tilde h_{k1})=\square(\tilde h_{k1})+[h_{k0}\otimes 1,\Omega_+(u)]$ and $[\tilde h_{i1},\tilde h_{j1}]=0$, it suffices to prove that
\begin{equation*}
 [[h_{i0}\otimes 1,\Omega_+(u)],[h_{j0}\otimes 1,\Omega_+(u)]]
 =[\square(\tilde h_{j1}),[h_{i0}\otimes1,\Omega_+(u)]]-[\square(\tilde h_{i1}),[h_{j0}\otimes1,\Omega_+(u)]]. \label{suff'}
\end{equation*}
However, the $u^k$ coefficient of the left-hand side of this expression is precisely \eqref{LHS}, and the $u^k$ coefficient of the right-hand side is \eqref{RHS} (now both viewed inside $Y(\g)\otimes Y(\g)$). As before, proving the equality of \eqref{LHS} and \eqref{RHS} reduces to proving the identity
\begin{equation}\label{Lt'}
 [h_{ij}(\alpha),x_\alpha^\mp]=\frac{1}{2}\sum_{\substack{\beta,\gamma\in \Delta_+^{\mathrm{re}}\\ \beta+\gamma=\alpha}}A_{ij,\beta,\gamma}(x_\alpha^\mp,[x_\beta^\pm,x_\gamma^\pm])x_\beta^\mp x_\gamma^\mp \quad \forall \; \alpha\in \Delta_+^{\mathrm{re}},
 \end{equation}
 where $h_{ij}(\alpha)=(\alpha,\alpha_j)\tilde h_{i1}-(\alpha,\alpha_i)\tilde h_{j1}$: see \eqref{hij=vij} and \eqref{Lt}. 
 By Part \eqref{L:du-1} of \lemref{L:delta_u}, it suffices to prove \eqref{Lt'} in the $\g$-module $\widetilde Y(\g)$. Since, by Part \eqref{L:du-3} of \lemref{L:delta_u}, \propref{prop:Jroot} holds in $\widetilde Y(\g)$, the equality \eqref{hij=vij} is satisfied in $\widetilde Y(\g)$:
 \begin{equation*}
  [h_{ij}(\alpha),x_\alpha^\mp]=[x_\alpha^\pm,v_{ij}(\alpha)],
 \end{equation*}
where $v_{ij}(\alpha)=(\alpha,\alpha_j)\tilde v_{i}-(\alpha,\alpha_i)\tilde v_{j}$. It is thus enough to prove that \eqref{Lt}, i.e. \eqref{Lt'} with the left-hand side replaced by $[x_\alpha^\pm,v_{ij}(\alpha)]$, holds in $\widetilde Y(\g)$.
The argument which was used to establish \eqref{Lt} in Part II of the proof of \thmref{thm:coproduct} also proves that \eqref{Lt} holds in $\widehat U(\g)$, and it therefore also holds in $\widetilde Y(\g)$.

\subsection{Recovering $\Delta_{V_1,V_2}$ and $\Delta$} Let $V_1$ and $V_2$ be two arbitrary $Y(\g)$-modules with associated algebra homomorphisms
$\rho_a:Y(\g)\to \End_\C(V_a)$ for $a\in \{1,2\}$. The tensor product $\rho_1\otimes \rho_2$ extends to a homomorphism
\begin{gather*}
\rho_1\otimes_u\rho_2:(Y(\g)\otimes Y(\g))(\!(u)\!)\to (\End_\C(V_1\otimes V_2))(\!(u)\!)\subset \End_{\C(\!(u)\!)}\left((V_1\otimes V_2)(\!(u)\!)\right),\\
\sum_{k\in \Z}Y_k u^k\mapsto \sum_{k\in \Z}(\rho_1\otimes \rho_2)(Y_k)u^k,
\end{gather*}
and hence, by \thmref{thm:cop_u}, the composition $(\rho_1\otimes_u\rho_2)\circ \Delta_u$ equips $(V_1\otimes V_2)(\!(u)\!)$ with the structure of a $Y(\g)$-module. 

If $(V_1\otimes V_2)[u^{\pm 1}]$ is a submodule of $(V_1\otimes V_2)(\!(u)\!)$, then the image of $(\rho_1\otimes_u\rho_2)\circ \Delta_u$ in $\End_{\C(\!(u)\!)}\left((V_1\otimes V_2)(\!(u)\!)\right)$ may be viewed as a subalgebra of  $\End_{\C[u^{\pm 1}]}\left((V_1\otimes V_2)[u^{\pm 1}]\right)$. The $\C[u^{\pm 1}]$-module evaluation map  
$V[u^{\pm 1}]\to V$, $u\mapsto 1$, then induces an algebra homomorphism
\begin{equation}
 \mathrm{ev}:\End_{\C[u^{\pm 1}]}\left((V_1\otimes V_2)[u^{\pm 1}]\right)\to \End_\C(V_1\otimes V_2),
\end{equation}
and the composition $\mathrm{ev}\circ (\rho_1\otimes_u\rho_2)\circ \Delta_u:Y(\g)\to \End_\C(V_1\otimes V_2)$ makes $V_1\otimes V_2$ a genuine $Y(\g)$-module. 

The next corollary illustrates that the category $\mathscr{O}$ provides a source of modules which fit into this framework. In addition, it demonstrates that $\Delta_u$ induces the algebra homomorphisms $\Delta_{V_1,V_2}$ and $\Delta$ of \thmref{thm:coproduct} and \propref{Prop:Del.comp}, respectively.
\begin{Corollary}\label{C:Delta_u}  Let $V_1$ be an arbitrary $Y(\g)$-module, fix $V_2\in \mathscr{O}$, and recall that $\Delta:Y(\g)\to Y(\g)\widehat\otimes Y(\g)$ is the algebra homomorphism of \propref{Prop:Del.comp}. Then
 \begin{enumerate}
  \item \label{Delta_u:1}$(V_1\otimes V_2)[u^{\pm 1}]$ is a submodule of $(V_1\otimes V_2)(\!(u)\!)$ and hence $\mathrm{ev}\circ (\rho_1\otimes_u\rho_2)\circ \Delta_u$ makes $V_1\otimes V_2$ into a $Y(\g)$-module. 
  \item\label{Delta_u:2} If in addition $V_1\in \mathscr{O}$, then the homomorphism $\Delta_{V_1,V_2}$ of \thmref{thm:coproduct} is recovered by
  \begin{equation*}
   \Delta_{V_1,V_2}=\mathrm{ev}\circ (\rho_1\otimes_u\rho_2)\circ \Delta_u.
  \end{equation*}
  \item \label{Delta_u:3} Assume that \eqref{as.1} and \eqref{as.2} are satisfied and let $\iota_u$ denote the natural inclusion $(Y(\g)\otimes Y(\g))(\!(u)\!)\to (Y(\g)\widehat \otimes Y(\g))(\!(u)\!)$. Then, evaluation at $u=1$ produces an algebra homomorphism 
  \begin{equation*}
  \widehat{\mathrm{ev}}:\mathrm{Image}(\iota_u\circ \Delta_u)\to Y(\g)\widehat \otimes Y(\g),
  \end{equation*}
  and we have $\Delta=\widehat{\mathrm{ev}}\circ \iota_u\circ \Delta_u$.  
 \end{enumerate} 
\end{Corollary}
\begin{proof}
 If $V_2\in \mathscr{O}$, then for any fixed $\mathbf{v}\in V_2$ there is $m\geq 0$ such that $x_\alpha^+(\mathbf{v})=0$ for all $\alpha\in \Delta_+^\mathrm{re}(k)$ with $k>m$.
 Therefore, for any fixed $Y(\g)$-module $V_1$ and element $\mathbf{w}\in V_1\otimes V_2$, $\Delta_u(\tilde h_{i1})(\mathbf{w})\in (V_1\otimes V_2)[u]$ for each $i\in I$ (see \eqref{Delta_u}). This implies that $(V_1\otimes V_2)[u^{\pm 1}]$ is indeed a submodule of $(V_1\otimes V_2)(\!(u)\!)$. Part \eqref{Delta_u:1} thus follows from the discussion preceding the statement of the corollary and Part \eqref{Delta_u:2} is deduced by comparing the definitions of $\Delta_u$ and $\Delta_{V_1,V_2}$ (see \eqref{eq:assign} and \eqref{Delta_u}).
 
 Part \eqref{Delta_u:3} of the corollary is a consequence of the fact that the application of $\widehat{\mathrm{ev}}$ to the generators 
 $\iota_u(\Delta_u(\tilde h_{i1}))$, $\iota_u(\Delta_u(x_{i0}^\pm))$ and $\iota_u(\Delta_u(h))$ (for $i\in I$ and $h\in \h$) of $\mathrm{Image}(\iota_u\circ \Delta_u)$ produces the well-defined elements $\Delta(\tilde h_{i1}), \Delta(x_{i0}^\pm)$ and $\Delta(h)$, respectively, of $Y(\g)\widehat\otimes Y(\g)$. \qedhere
\end{proof}
\begin{Remark} 
By \propref{Prop:mon}, $\Delta_{V_1,V_2}$ is coassociative. On the other hand,  $\Delta_u$  satisfies the same ``twisted'' coassociativity property as the deformed Drinfeld (or ``non-standard") coproduct for quantum affinizations \cite[Lemma 3.4]{He2}: 
 \begin{equation*}
(\Delta_u\otimes \mathrm{id}) \circ \Delta_{uv}=(\mathrm{id}\otimes \Delta_v) \circ \Delta_{u}.
 \end{equation*}
The use of the gradation homomorphism $\mathrm{s}_u$ to ``discover'' $\Delta_u$ has been inspired by similar ideas employed by Hernandez to develop
 the deformed Drinfeld coproduct in \cite{He1}: see Remark 2 therein. Note, however, that the Yangian analogue of the gradation morphism $T$ in \cite[Remark 2]{He1} specializes (for $u\in \C$) to a shift automorphism of the Yangian \cite[\S 4.5]{GTL3}, which differs significantly from $\mathrm{s}_u$.
\end{Remark}

\section{Two parameter Yangian in type $A^{(1)}_{n-1}$}\label{sec:2para}

In this section, we assume that $\g$ is of type $A^{(1)}_{n-1}$ and $n\ge 3$. (\defref{def:TwoPar} below is not the correct one when $n=2$: in this case, see the definition in \S 1.2 in \cite{BeTs} and Definition 5.1 in \cite{Ko}.) We identify the index set $I$ with $\Z/n\Z$ and normalize $(\ ,\ )$ so that $(\alpha_i,\alpha_i) = 2$ for all $i\in I$. In this case, the definition of the Yangian $Y_\hbar(\g)$ can be generalized by introducing a second parameter $\ve$ (see \cite{Gu2}; for quantum toroidal algebras, see \cite{VaVa}).
\begin{Definition}\label{def:TwoPar}
Let $\hbar,\ve\in\C$. The Yangian $Y_{\hbar,\ve}(\g^\prime)$ is the algebra over $\C$ with generators $x_{ir}^{\pm}, \, h_{ir}$ ($i\in I,\, r\in\Z_{\ge 0}$) subject to 
the defining relations of $Y_\hbar(\g^\prime)$ given in \defref{def:Yangian} with the modification that, when $j=i+1$ or $j=i-1$, \eqref{eq:relexHX} and \eqref{eq:relexXX} are replaced with the 
relations: 
\begin{gather}
[h_{i,r+1}, x^\pm_{i+1,s}] - [h_{i,r}, x^\pm_{i+1,s+1}] = 
  \mp \frac\hbar2 \left(x^\pm_{i+1,s} h_{ir} + h_{ir} x^\pm_{i+1,s}\right)
  + \frac{\ve}2 [h_{i r}, x^\pm_{i+1, s}], \label{eq:relHXve1}
  \\
  [h_{i,r+1}, x^\pm_{i-1,s}] - [h_{i,r}, x^\pm_{i-1,s+1}] = 
  \mp \frac\hbar2 \left( x^\pm_{i-1,s} h_{ir} + h_{ir} x^\pm_{i-1,s}\right)
  - \frac{\ve}2 [h_{i r}, x^\pm_{i-1,s}], \label{eq:relHXve2}
  \\
  [x^\pm_{i, r+1},x^\pm_{i+1,s}] - [x^\pm_{i r}, x^\pm_{i+1,s+1}] = 
  \mp \frac\hbar2 \left(x^\pm_{i+1,s} x^\pm_{i r} 
    + x^\pm_{i r} x^\pm_{i+1,s}\right)
  + \frac{\ve}2 [x^\pm_{i r}, x^\pm_{i+1,s}]. \label{eq:relXXve}
\end{gather}

The Yangian $Y_{\hbar,\ve}(\g)$ is then defined in the same manner as $Y_{\hbar}(\g)$: it is the quotient of $Y_{\hbar,\ve}(\g^\prime)\otimes_\C U(\h)$ by the ideal generated by the relations \eqref{eq:Cartan}. 
\end{Definition}

The defining relations for $Y_{\hbar,\ve}(\g^\prime)$ given above are slightly different from those which appear in \cite[Def.~2.3]{Gu2} (where $Y_{\hbar,\ve}(\g^\prime)$ is denoted by $\hat{\mathbf{Y}}_{\beta,\lambda}$).
One advantage of the relations in \defref{def:TwoPar} is that they are invariant under the rotational symmetry $0\to 1\to 2\to \cdots\to n-1\to 0$. 
Both definitions are equivalent as follows. We set
\begin{equation*}
  \begin{split}
   h'_{i r} & \defeq \sum_{s=0}^r \binom{r}{s}
   \ve^{r-s} \left(\frac{i}2
     - \frac{n}{4}\right)^{r-s} h_{i s},
\\
   x^{\prime\pm}_{i r} & \defeq \sum_{s=0}^r \binom{r}{s} 
   \ve^{r-s} \left(\frac{i}2
     - \frac{n}{4}\right)^{r-s} x^\pm_{i s}
  \end{split}
\end{equation*}
for $i=1,2,\dots, n-1$ and $x^{\prime \pm}_{0r} = x^{\pm}_{0r}$, $h_{0r}^\prime = h_{0r}$.
Then $x^{\prime\pm}_{i r}$ and $h^{\prime}_{i r}$  satisfy the relations in \cite[Def.~2.3]{Gu2} with $\lambda = \hbar$, $\beta = -\frac{\ve n}{4} + \frac{\hbar}{2}$.

When $\ve\neq 0$,  $Y_{\hbar=0,\ve}(\g^\prime)$ is isomorphic to the enveloping algebra of the universal central extension of the Lie algebra of $n\times n$ matrices with entries in the ring of differential operators on $\C^{\times}$: see \S 5 in \cite{Gu2}. Otherwise, if $\hbar_1\neq 0$ and $\hbar_2\neq 0$, then $Y_{\hbar_1,\ve}(\g) \cong Y_{\hbar=1,\ve/\hbar_1}(\g) \cong Y_{\hbar_2,\ve\hbar_2 / \hbar_1}(\g)$, so it is enough to focus on $Y_{\hbar=1,\ve}(\g)$ for any $\ve\in\C$. 

Our goal for the rest of this paper is to explain how the main results established in the previous sections also hold for $Y_{\hbar=1,\ve}(\g)$ after making only a few minor adjustments.

We begin by noting that it has already been proven in \cite{Gu2} that \thmref{thm:deduction} holds for $Y_{\hbar=1,\ve}(\g)$ with \eqref{eq:relexHX2'} and \eqref{eq:relexXX'} replaced by \eqref{eq:relHXve1}, \eqref{eq:relHXve2} and \eqref{eq:relXXve} 
with $r=s=0$ when $j=i +1$ or $j=i-1$: see Proposition 2.1 in \textit{loc.\ cit.} 

It is also the case that \thmref{thm:coproduct} holds for $Y_{\hbar=1,\ve}(\g)$ with $\Delta$ given by the same formula \eqref{eq:assign}. 
The proof of \thmref{thm:coproduct} in this case follows the same steps as before, except that some new terms appear due to the presence of the second parameter $\ve$.  The remainder of this section will be devoted to explaining the key differences and necessary modifications. We start by introducing operators $J(h_i)$ and $J(x_i^{\pm})$ on modules in the category $\mathscr{O}$ exactly as in \eqref{eq:Jhx}. As a consequence of \lemref{lem:vw}, these operators still satisfy the equivalences \eqref{eq:equiv}, 
however, the second and fourth equivalences should be altered when $j=i+ 1$ or $j=i-1$ to account for the modified relations of \defref{def:TwoPar}.
It is straightforward to verify that \eqref{eq:relHXve1}, \eqref{eq:relHXve2} and \eqref{eq:relXXve} with $(r,s)=(0,0)$ are equivalent to  the relations 
\begin{gather} [J(h_i),x_{i+1}^\pm]=\pm(\alpha_i,\alpha_{i+1})(J(x_{i+1}^\pm)+\frac{\ve}{2}x_{i+1}^\pm), \label{equiv.1} \\
[J(h_i),x_{i-1}^\pm]=\pm(\alpha_i,\alpha_{i-1})(J(x_{i-1}^\pm)-\frac{\ve}{2}x_{i-1}^\pm), \label{equiv.2}\\
[J(x_i^\pm),x_{i+1}^\pm]=[x_{i}^\pm,J(x_{i+1}^\pm)+\frac{\ve}{2}x_{i+1}^\pm] , \label{equiv.3}
\end{gather}
respectively. To account for these changes, \lemref{lem:tauJ} has to be slightly modified as follows. 
\begin{Lemma}\label{Lem:t(J)}
We have 
 \begin{equation*}
  \tau_i(J(h_j))=J(s_i(h_j))-\frac{\ve}{2}(\delta_{i+1,j} - \delta_{i-1,j})h_{i}=J(h_j)-(\alpha_i,\alpha_j)J(h_i)+\frac{\ve}{2}(\delta_{i,j+1} - \delta_{i,j-1})h_{i}
 \end{equation*}
for all $i,j\in I$. 
\end{Lemma}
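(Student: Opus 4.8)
The plan is to mimic the proof of \lemref{lem:tauJ} but keep careful track of the correction terms coming from \eqref{equiv.1}, \eqref{equiv.2} and \eqref{equiv.3}. As before, I fix a module $V$ in the category $\mathscr O$ and regard all the operators $x_{ir}^\pm$, $h_{ir}$, $v_i$, $w_i^\pm$, hence also $J(h_j)$ and $J(x_j^\pm)$, as elements of $\End_\C(V)$, and I let $\algsl_2^{(i)} = \C e_i + \C h_i + \C f_i$ act on $\End_\C(V)$ by the adjoint action. The case $j=i$ is unchanged: the three-dimensional span $\C J(x_i^+) + \C J(h_i) + \C J(x_i^-)$ is still $\ad(\algsl_2^{(i)})$-stable by \lemref{lem:vw} and the (unmodified) first and third equivalences in \eqref{eq:equiv}, so it is either zero or an irreducible $\algsl_2$-triple, and in either case $\tau_i(J(h_i)) = -J(h_i)$, which matches the claimed formula since $\delta_{i+1,i} = \delta_{i-1,i} = 0$.

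Next I treat $j$ with $(\alpha_i,\alpha_j)=0$, i.e.\ $j\notin\{i-1,i,i+1\}$ (using $n\ge 3$ so the only neighbours of $i$ in the cyclic Dynkin diagram are $i\pm1$). Here the relations involving $h_{i0}$ and $x_j^\pm$ are the ordinary ones, so $\ad(e_i)J(h_j) = \ad(f_i)J(h_j) = 0$ by \eqref{eq:equiv}, hence $J(h_j)$ is $\tau_i$-fixed; again this agrees with the asserted formula. The substantive case is $j = i+1$ (the case $j=i-1$ being entirely parallel, with $\ve$ replaced by $-\ve$). By \eqref{equiv.1} and \lemref{lem:vw}, the operator $J(h_{i+1})$ is not $\ad(e_i)$-closed, but the combination
\begin{equation*}
 Z \defeq J(h_{i+1}) - \tfrac{(\alpha_i,\alpha_{i+1})}{(\alpha_i,\alpha_i)}J(h_i) + \tfrac{\ve}{2}\,\tfrac{(\alpha_i,\alpha_{i+1})}{(\alpha_i,\alpha_i)}\,h_i
\end{equation*}
should be killed by both $\ad(e_i)$ and $\ad(f_i)$: applying $\ad(x_i^+) = \ad(e_i)$ up to scalar to each summand and using \eqref{equiv.1} (which supplies exactly the $\mp\frac{\ve}{2}x_{i+1}^\pm$ correction), the first-and-third equivalences of \eqref{eq:equiv} for $J(h_i)$, and $[h_i, x_i^\pm] = \pm(\alpha_i,\alpha_i)x_i^\pm$, the three contributions cancel; similarly for $\ad(f_i)$. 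Hence $Z$ is $\tau_i$-fixed. Since $\tau_i(h_i) = -h_i$ (the standard $\algsl_2$ computation inside the locally nilpotent action), solving $\tau_i(Z)=Z$ for $\tau_i(J(h_{i+1}))$ gives
\begin{equation*}
 \tau_i(J(h_{i+1})) = J(h_{i+1}) - \tfrac{2(\alpha_i,\alpha_{i+1})}{(\alpha_i,\alpha_i)}J(h_i) + \tfrac{\ve}{2}\,\tfrac{2(\alpha_i,\alpha_{i+1})}{(\alpha_i,\alpha_i)} \cdot \tfrac{1}{2}\cdot 2\, h_i,
\end{equation*}
and with $(\alpha_i,\alpha_i)=2$ and $(\alpha_i,\alpha_{i+1})=-1$ the last term is $-\frac{\ve}{2}h_i$; noting $\delta_{i+1,i+1} - \delta_{i-1,i+1} = 1$, this is exactly the asserted identity. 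The case $j=i-1$ is the same computation with the sign of the $\ve$-term reversed, using \eqref{equiv.2}.

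The only genuine subtlety — and the step I would write out most carefully — is verifying that the corrected combination $Z$ really is annihilated by $\ad(e_i)$ and $\ad(f_i)$: this requires that the extra $\pm\frac{\ve}{2}x_{i+1}^\pm$ term appearing in \eqref{equiv.1}/\eqref{equiv.2} is cancelled precisely by $\ad(e_i)$, resp.\ $\ad(f_i)$, applied to the $h_i$ summand of $Z$, and one must also confirm that $J(h_i)$ still satisfies the \emph{unmodified} equivalences $[h_i, J(x_i^\pm)] = J([h_i,x_i^\pm])$ and $[J(h_i),x_i^\pm]=J([h_i,x_i^\pm])$ in the two-parameter setting (this holds because the $\ve$-modification in Definition \ref{def:TwoPar} only affects neighbouring indices $j=i\pm1$, not the diagonal relations, so \lemref{lem:vw} and \eqref{eq:equiv} for the index $i$ alone are untouched). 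Once this bookkeeping is in place, the rest is the formal $\algsl_2$-argument already used for \lemref{lem:tauJ}, so no new ideas are needed beyond tracking the constants.
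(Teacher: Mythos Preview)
Your overall strategy is exactly the one the paper intends (it simply says the proof is the same as for \lemref{lem:tauJ}), but there is a sign error in the bookkeeping which makes your argument internally inconsistent.

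The bracket you need when applying $\ad(e_i)$ to $J(h_{i+1})$ is $[J(h_{i+1}),x_i^\pm]$, not $[J(h_i),x_{i+1}^\pm]$. This comes from \eqref{equiv.2} with the index shifted by one, namely
\[
 [J(h_{i+1}),x_i^\pm]=\pm(\alpha_{i+1},\alpha_i)\Bigl(J(x_i^\pm)+\tfrac{\ve}{2}x_i^\pm\Bigr),
\]
so the $\ve$-correction enters with a \emph{plus} sign, not a minus. Tracking this through, the correct $\ad(\algsl_2^{(i)})$-invariant combination is
\[
 Z=J(h_{i+1})-\tfrac{(\alpha_i,\alpha_{i+1})}{(\alpha_i,\alpha_i)}J(h_i)-\tfrac{\ve}{2}\,\tfrac{(\alpha_i,\alpha_{i+1})}{(\alpha_i,\alpha_i)}\,h_i
 =J(h_{i+1})+\tfrac12 J(h_i)+\tfrac{\ve}{4}h_i,
\]
i.e.\ the $h_i$-term has the opposite sign to the one you wrote. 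Solving $\tau_i(Z)=Z$ with $\tau_i(J(h_i))=-J(h_i)$ and $\tau_i(h_i)=-h_i$ then yields $\tau_i(J(h_{i+1}))=J(h_{i+1})+J(h_i)+\tfrac{\ve}{2}h_i$, which matches the lemma. Your own displayed conclusion gives a last term $-\tfrac{\ve}{2}h_i$, which you then declare equal to the asserted $+\tfrac{\ve}{2}h_i$; that contradiction is the visible symptom of the sign slip. Fixing the sign of the $h_i$-coefficient in $Z$ (equivalently, citing \eqref{equiv.2} rather than \eqref{equiv.1} for the relevant bracket) repairs the argument completely.
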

The proof of this lemma is the same as for \lemref{lem:tauJ}. The operators $J(x_{\alpha}^{\pm})$ are also defined as before (see \eqref{eq:Jxal}), but \propref{prop:Jroot} has to be modified to account for the second parameter $\ve$.
\begin{Proposition}\label{Prop:NewJhx}
 For every positive real root $\alpha$ and every $i\in I$, there exists an integer $c_{\alpha,i}$ such that 
 \begin{equation}
  [J(h_i),x_\alpha^\pm]=\pm(\alpha_i,\alpha)J(x_\alpha^\pm)\pm \frac{\ve}{2}c_{\alpha,i}x_\alpha^\pm=[h_{i},J(x_\alpha^\pm)]\pm \frac{\ve}{2}c_{\alpha,i}x_\alpha^\pm.
 \end{equation}
\end{Proposition}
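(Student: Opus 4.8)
The plan is to mirror the inductive proof of \propref{prop:Jroot}, carrying along the extra $\ve$-terms produced by \lemref{Lem:t(J)} and by the altered equivalences \eqref{equiv.1}--\eqref{equiv.3}. Fix $i\in I$. Write a positive real root as $\alpha=w(\alpha_j)$ with $w=s_{i_1}\cdots s_{i_{p-1}}$ a reduced expression and $i_p=j$, so that $x_\alpha^\pm=\tau_{i_1}\cdots\tau_{i_{p-1}}(x_j^\pm)$ and, by \eqref{eq:Jxal}, $J(x_\alpha^\pm)=\tau_{i_1}\cdots\tau_{i_{p-1}}(J(x_j^\pm))$. I would prove by induction on $p$ the two statements
\begin{equation*}
[J(h_i),x_\alpha^\pm]=\pm(\alpha_i,\alpha)J(x_\alpha^\pm)\pm\tfrac{\ve}{2}c_{\alpha,i}\,x_\alpha^\pm,
\qquad
[h_i,J(x_\alpha^\pm)]=\pm(\alpha_i,\alpha)J(x_\alpha^\pm),
\end{equation*}
for suitable integers $c_{\alpha,i}$; subtracting the second from the first yields exactly the second equality asserted in the proposition. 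Alongside these one needs the elementary weight identity $[h_i,x_\alpha^\pm]=\pm(\alpha_i,\alpha)x_\alpha^\pm$, proved by the same induction with $J$ removed (here $\ve$ plays no role since $\tau_{i_1}$ acts on $\h\subset Y(\g)$ exactly by $s_{i_1}$).

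For the base case $p=1$ we have $x_\alpha^\pm=x_j^\pm$, and both statements are read off from \eqref{eq:equiv}: $[h_i,J(x_j^\pm)]=\pm(\alpha_i,\alpha_j)J(x_j^\pm)$ is the first (unchanged) equivalence, while $[J(h_i),x_j^\pm]$ is given by the second equivalence when $j\notin\{i-1,i+1\}$ and by \eqref{equiv.1}, \eqref{equiv.2} when $j=i+1$, $j=i-1$ respectively; in all cases this forces $c_{\alpha_j,i}\in\{0,\pm1\}$ (explicitly $c_{\alpha_j,i}=\delta_{j,i+1}-\delta_{j,i-1}$, using $(\alpha_i,\alpha_{i\pm1})=-1$). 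For the inductive step put $\beta=s_{i_2}\cdots s_{i_{p-1}}(\alpha_{i_p})$, so $\alpha=s_{i_1}(\beta)$, and write $[J(h_i),x_\alpha^\pm]=\tau_{i_1}\bigl([\tau_{i_1}^{-1}(J(h_i)),x_\beta^\pm]\bigr)$. The computation that drives everything is that \lemref{Lem:t(J)}, together with $\tau_{i_1}(h_{i_1})=-h_{i_1}$ and (for $n\ge3$) $\delta_{i_1+1,i_1}=\delta_{i_1-1,i_1}=0$, gives $\tau_{i_1}^{2}(J(h_i))=J(h_i)$, so that
\begin{equation*}
\tau_{i_1}^{-1}(J(h_i))=\tau_{i_1}(J(h_i))=J(s_{i_1}(h_i))+\tfrac{\ve}{2}(\delta_{i_1+1,i}-\delta_{i_1-1,i})\,h_{i_1}.
\end{equation*}
Substituting this, expanding $J(s_{i_1}(h_i))=J(h_i)-a_{i_1i}J(h_{i_1})$ by linearity of $J$, applying the inductive hypotheses to $[J(h_i),x_\beta^\pm]$ and $[J(h_{i_1}),x_\beta^\pm]$ and the weight identity to $[h_{i_1},x_\beta^\pm]$, and then applying $\tau_{i_1}$, one obtains
\begin{equation*}
[J(h_i),x_\alpha^\pm]=\pm(s_{i_1}\alpha_i,\beta)J(x_\alpha^\pm)\pm\tfrac{\ve}{2}\bigl(c_{\beta,i}-a_{i_1i}c_{\beta,i_1}+(\delta_{i_1+1,i}-\delta_{i_1-1,i})(\alpha_{i_1},\beta)\bigr)x_\alpha^\pm,
\end{equation*}
and $(s_{i_1}\alpha_i,\beta)=(\alpha_i,s_{i_1}\beta)=(\alpha_i,\alpha)$ since $s_{i_1}$ is an isometry. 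This is the required form, with $c_{\alpha,i}$ defined by the bracketed recursion; it is an integer because $a_{i_1i}$, $c_{\beta,i}$, $c_{\beta,i_1}$ and $(\alpha_{i_1},\beta)=\langle\alpha_{i_1}^\vee,\beta\rangle$ all are. The parallel, simpler computation $[h_i,J(x_\alpha^\pm)]=\tau_{i_1}([s_{i_1}(h_i),J(x_\beta^\pm)])$ uses only that $\tau_{i_1}^{-1}$ acts on $\h$ by $s_{i_1}$ and the second inductive hypothesis, and produces no $\ve$-term.

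Two points to attend to. Well-definedness: the root vector $x_\alpha^\pm$, and hence $J(x_\alpha^\pm)$ via the same product of $\tau$'s, is only defined up to a nonzero scalar depending on the reduced word, but both sides of the first displayed identity scale by that same factor, so $c_{\alpha,i}$ is independent of the choice and the recursion determines a genuine function of $(\alpha,i)$. The main obstacle is purely bookkeeping: one must pin down the signs in \lemref{Lem:t(J)} and \eqref{equiv.1}--\eqref{equiv.3} so that the $\ve$-term of $\tau_{i_1}^{-1}(J(h_i))$ and the term $(\delta_{i_1+1,i}-\delta_{i_1-1,i})(\alpha_{i_1},\beta)x_\beta^\pm$ enter the recursion with the correct signs; beyond this there is no new conceptual content relative to \propref{prop:Jroot}.
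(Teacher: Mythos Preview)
Your proposal is correct and follows essentially the same inductive strategy as the paper's proof, with the same base case $c_{\alpha_j,i}=\delta_{j,i+1}-\delta_{j,i-1}$ and the same recursion $c_{\alpha,i}=c_{\beta,i}-(\alpha_{i_1},\alpha_i)c_{\beta,i_1}+(\alpha_{i_1},\beta)c_{\alpha_i,i_1}$ (since in type $A$ with this normalization $a_{i_1i}=(\alpha_{i_1},\alpha_i)$ and $c_{\alpha_i,i_1}=\delta_{i_1+1,i}-\delta_{i_1-1,i}$). You are in fact more explicit than the paper on two points it leaves implicit: the verification that $\tau_{i_1}^2(J(h_i))=J(h_i)$ so that $\tau_{i_1}^{-1}(J(h_i))=\tau_{i_1}(J(h_i))$ can be computed from \lemref{Lem:t(J)}, and the remark that $c_{\alpha,i}$ is independent of the reduced word because both sides scale identically under rescaling of $x_\alpha^\pm$.
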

\begin{proof}
 We employ the same strategy as was used in the proof of \propref{prop:Jroot}: we argue by induction on $p$, where $x_\alpha^\pm=\tau_{i_1}\tau_{i_2}\cdots \tau_{i_{p-1}}(x_{i_p}^\pm)$ with $x_{i_p}^\pm=x_j^\pm$ for some $j\in I$. If $p=1$, 
 then $x_\alpha^\pm=x_j^\pm$ and the equivalences \eqref{equiv.1}, \eqref{equiv.2} and \eqref{eq:equiv} imply that we have 
 \begin{equation}
  [J(h_i),x_{j}^\pm]=\pm (\alpha_i,\alpha_j)J(x_j^\pm)\pm (\delta_{i,j+1} - \delta_{i,j-1})\frac{\ve}{2}x_{j}^\pm=[h_{i},J(x_j^\pm)]\pm (\delta_{i,j+1} - \delta_{i,j-1})\frac{\ve}{2}x_{j}^\pm, \label{NewJx}
 \end{equation}
and hence we may take $c_{\alpha_j,i}=\delta_{i,j+1} - \delta_{i,j-1}$. Suppose that the proposition holds for $x_\beta^\pm$ with $\beta=s_{i_2}\cdots s_{i_{p-1}}(\alpha_{i_p})$.  \lemref{Lem:t(J)}, with $i_1,i$ instead of $i,j$,  gives \begin{align*} J(h_i) = {} & \tau_{i_1}^{-1}(J(h_i)) - (\alpha_{i_1},\alpha_i) \tau_{i_1}^{-1}(J(h_{i_1}) ) + \frac{\ve}{2} c_{\alpha_i,i_1} \tau_{i_1}^{-1}(h_{i_1}) \\
 = {} & \tau_{i_1}^{-1}(J(h_i)) + (\alpha_{i_1},\alpha_i) J(h_{i_1}) - \frac{\ve}{2} c_{\alpha_i,i_1} h_{i_1}.
\end{align*} Therefore,
\begin{align*}
 [J(h_i),x_\alpha^\pm]&=\tau_{i_1}([\tau_{i_1}^{-1}J(h_i),x_\beta^\pm])\\
					 &=\tau_{i_1}\left([J(s_{i_1}(h_i))+\frac{\ve}{2}c_{\alpha_i,i_1}h_{i_1},x_\beta^\pm] \right)\\
                      &=\tau_{i_1}\left([J(h_i)-(\alpha_{i_1},\alpha_i)J(h_{i_1}),x_\beta^\pm] \right)\pm (\alpha_{i_1},\beta)c_{\alpha_i,i_1}\frac{\ve}{2}\tau_{i_1}(x_\beta^\pm)\\
                      &=\pm(s_{i_1}(\alpha_i),\beta)\tau_{i_1}(J(x_\beta^\pm))\pm\frac{\ve}{2}(c_{\beta,i}-(\alpha_{i_1},\alpha_i)c_{\beta,i_1}+(\alpha_{i_1},\beta)c_{\alpha_i,i_1}\tau_{i_1}(x_\beta^\pm)\\
                      &=\pm(\alpha_i,\alpha)J(x_\alpha^\pm)\pm\frac{\ve}{2}\left( c_{\beta,i}-(\alpha_{i_1},\alpha_i)c_{\beta,i_1}+(\alpha_{i_1},\beta)c_{\alpha_i,i_1}\right)x_{\alpha}^\pm,
\end{align*}
where the third equality uses the induction assumption and the fourth equality uses that $(s_{i_1}(\alpha_i),\beta)=(\alpha_i,s_{i_1}(\beta))=(\alpha_i,\alpha)$. Setting 
$c_{\alpha,i}=c_{\beta,i}-(\alpha_{i_1},\alpha_i)c_{\beta,i_1}+(\alpha_{i_1},\beta)c_{\alpha_i,i_1}$ we obtain that $[J(h_i),x_\alpha^\pm]=\pm (\alpha_i,\alpha)J(x_\alpha^\pm)\pm \frac{\ve}{2}c_{\alpha,i}x_\alpha^\pm$. 
Moreover, by the induction assumption we have the equalities $[J(h_i),x_\beta^\pm]=[h_{i},J(x_\beta^\pm)]\pm \frac{\ve}{2}c_{\beta,i}x_\beta^\pm$ and $[J(h_{i_1}),x_\beta^\pm]=[h_{i_1},J(x_\beta^\pm)]\pm \frac{\ve}{2}c_{\beta,i_1}x_\beta^\pm$.
These together with the second equality in the expansion of $[J(h_i),x_\alpha^\pm]$ above also imply that 
\begin{align*}
 [J(h_i),x_\alpha^\pm] = {} & \tau_{i_1}\left([\tau_{i_1}^{-1}(h_i),J(x_\beta^\pm)] \right)  \pm\frac{\ve}{2}\left( c_{\beta,i}-(\alpha_{i_1},\alpha_i)c_{\beta,i_1}+(\alpha_{i_1},\beta)c_{\alpha_i,i_1}\right)x_{\alpha}^\pm \\
 = {} & [h_{i},J(x_\alpha^\pm)]\pm \frac{\ve}{2}c_{\alpha,i}x_\alpha^\pm. \qedhere
\end{align*}
\end{proof}
Part I of the proof of \thmref{thm:coproduct} is the same as before except for new terms involving $\ve$ which appear when computing 
$[\Delta(\tilde{h}_{i1}),\Delta (x_{j}^{\pm})]$ and $[\Delta (x_{i1}^{\pm}),\Delta (x_{j}^{\pm})]$ when $j=i\pm 1$.

Only the following two modifications must be made in Part II of the proof of \thmref{thm:coproduct}. First, the relation \eqref{suff:2} should be replaced with 
 \begin{equation}
 \label{suff:2'}
  \sum_{\mathrm{ht}(\alpha+\beta)=k}(\alpha_i,\alpha)(\alpha_j,\beta)\{x_\alpha^\mp,x_\beta^\mp\}\otimes [x_\alpha^\pm,x_\beta^\pm]
  =2\hspace{-.5em}\sum_{\alpha\in \Delta_+^{\mathrm{re}}(k)}\left([h_{ij}(\alpha),x_\alpha^\mp]\pm d_{\alpha,i,j}\frac{\ve}{2}x_\alpha^\mp\right)\otimes x_\alpha^\pm,
 \end{equation}
where $d_{\alpha,i,j}=(\alpha,\alpha_j)c_{\alpha,i}-(\alpha,\alpha_i)c_{\alpha,j}$. Note that, by \eqref{LHS} and \eqref{RHS}, this still implies the relation \eqref{suff}. The above adjustment is made in anticipation of the second modification, which is that, due to \propref{Prop:NewJhx}, the relation \eqref{hij=vij} is to be replaced with 
\begin{equation*}
 [h_{ij}(\alpha),x_\alpha^\mp]
 =[(\alpha,\alpha_j)J(h_i)-(\alpha,\alpha_i)J(h_j),x_\alpha^\mp]-[v_{ij}(\alpha),x_\alpha^\mp]=[x_\alpha^\mp,v_{ij}(\alpha)]\mp d_{\alpha,i,j}\frac{\ve}{2}x_\alpha^\mp.
\end{equation*}
In particular, the right-hand side of \eqref{suff:2'} is still equal to $2\sum_{\alpha\in \Delta_+^{\mathrm{re}}(k)}[x_\alpha^\mp,v_{ij}(\alpha)]\otimes x_\alpha^\pm$, and the remainder of the proof thus proceeds without modification. Therefore, we may conclude that \thmref{thm:coproduct} holds for $Y_{\hbar=1,\ve}(\g)$.

Finally, the results on completions in \secref{Sec:comp} also hold for $Y_{\hbar=1,\ve}(\g)$, so in particular we may view $\Delta$ as an algebra homomorphism $Y_{\hbar=1,\ve}(\g)\to Y_{\hbar=1,\ve}(\g)\widehat \otimes Y_{\hbar=1,\ve}(\g)$. Moreover, we can recover $\Delta$ and $\Delta_{V_1,V_2}$ from a parameter dependent coproduct $\Delta_u$ as in \secref{Sec:paracoprod}.

\bibliographystyle{hamsalpha}
\bibliography{coprodbib}

\end{document}